\let\SF@@footnote\footnote
\def\footnote{\ifx\protect\@typeset@protect
    \expandafter\SF@@footnote
  \else
    \expandafter\SF@gobble@opt
  \fi
}
\def\csname SF@gobble@opt \endcsname{\@ifnextchar[
  \SF@gobble@twobracket
  \@gobble
}
\edef\SF@gobble@opt{\noexpand\protect
  \expandafter\noexpand\csname SF@gobble@opt \endcsname}
\def\SF@gobble@twobracket[#1]#2{}
\numberwithin{equation}{section}
\numberwithin{figure}{section}
\theoremstyle{plain}
\newtheorem{thm}{Theorem}[section]
  \theoremstyle{plain}
  \newtheorem{cor}[thm]{Corollary}
  \theoremstyle{plain}
  \newtheorem{prop}[thm]{Proposition}
  \theoremstyle{remark}
  \newtheorem{rem}[thm]{Remark}
  \theoremstyle{plain}
  \newtheorem{lem}[thm]{Lemma}
  \theoremstyle{definition}
  \newtheorem{defn}[thm]{Definition}
\def\makebbb#1{
    \expandafter\gdef\csname#1\endcsname{
        \ensuremath{\Bbb{#1}}}
}
\begin{document}

\title{determinantal point processes and fermions on complex manifolds:
large deviations and bosonization}

\author{Robert J.Berman}

\email{robertb@chalmers.se}

\curraddr{Mathematical Sciences - Chalmers University of Technology and University
of Gothenburg - SE-412 96 Gothenburg, Sweden }
\begin{abstract}
We study determinantal random point processes on a compact complex
manifold $X$ associated to an Hermitian metric on a line bundle over
$X$ and a probability measure on $X.$ Physically, this setup describes
a free fermion gas on $X$ subject to a $U(1)-$ gauge field and when
$X$ is the Riemann sphere it specializes to various random matrix
ensembles. It is shown that, in the many particle limit, the empirical
random measures on $X$ converge exponentially towards the deterministic
pluripotential equilibrium measure, defined in terms of the Monge-Ampere
operator of complex pluripotential theory. More precisely, a large
deviation principle (LDP) is established with a good rate functional.
We also express the LDP in terms of the Ray-Singer analytic torsion
and the exponentially small eigenvalues of $\bar{\partial}-$Laplacians.
This can be seen as an effective bosonization formula, generalizing
the previously known formula in the Riemann surface case to higher
dimensions and the paper is concluded with a heuristic quantum field
theory intepretation of the resulting effective boson-fermion correspondence. 

\tableofcontents{}
\end{abstract}
\maketitle

\section{Introduction}

In this paper we study a natural class of determinantal random point
processes \cite{m,h-k-p} defined on a compact complex manifold $X.$
These processes are induced by the choice of a polarization of $X,$
i.e an ample line bundle $L$ over $X$ and we will be concerned with
their many particle limit. When $X$ is the Riemann sphere this geometric
setup contains the extensively studied (Hermitian, unitary and normal)
random matrix ensembles. Suitable higher dimensional choices of polarized
$X$ give rise to multivariate orthogonal polynomial ensembles, as
well as their trigonometric and spherical counterparts (see section
\ref{sec:Examples}). On a general complex manifold the point processes
represent, from a physical point of view, a gas of (chiral/spin polarized)
fermions coupled to a gauge field on $X$ with gauge group $U(1)$
(see section \ref{sec:Relation-to-bosonization}). In broad terms
the main aim of this paper is to decribe the many particle limit in
terms of global pluripotential theory and relate it to the notion
of bosonization (boson-fermion correspondences) in the physics litterature,
previously known only in the Riemann surface case \cite{st}. In the
companion paper \cite{berm3} central limit theorems and universality
of correlation functions were obtained. Here we will be concerned
with the large deviation regime, establishing a large deviation principle
(LDP) for the empirical measure of the point process.

Another concrete motivation for the LDP comes from probabilistic methods
for locating nearly optimal nodes for interpolating polynomials of
large degree on a given set $K.$ Such optimal nodes are commenly
defined as configurations of points $(x_{1},...,x_{N})$ maximizing
the density of the probability measure \ref{eq:def of det process intro}
below \cite{sl-w} (i.e. as configurations of Fekete points on $K$
\cite{b-b-w}). 

It may be illluminating to first formulate the main results to be
obtained in an informal manner. Consider a gas of $N$ identical particles
on $X$ decribed by a symmetric probability measure $\mu^{(N)}$ on
$X^{N}$ with density $\rho^{(N)}(x_{1},...,x_{N})$ wrt a fixed volume
form $dV$ on $X.$ The theory of large deviations \cite{d-z} allows
one to give a meaning to the statement that a $\mu^{(N)}$ is {}``exponentially
concentrated on a deterministic macroscopic measure $\mu_{eq}$ (the
equilibrium measure) with a rate functional $H(\mu).$ The idea is
to think of the large $N-$limit of the $N-$particle space $X^{N}$
of configurations of points $(x_{1},...,x_{N})$ ({}``microstates'')
as being approximated by a space of {}``macrostates'', which is
the space $\mathcal{P}(X)$ of all probability measures on $X:$ \[
X^{N}\sim\mathcal{P}(X),\]
 as $N\rightarrow\infty.$ The exponential concentration referred
to above may then be informally written as \begin{equation}
\mu^{(N)}:=\rho^{(N)}(x_{1},...,x_{N})dV(x_{1})\otimes\cdots\otimes dV(x_{N})\sim e^{-a_{N}H(\mu)}\mathcal{D}\mu,\label{eq:large dev heur}\end{equation}
 where $\mathcal{D}\mu$ denotes a (formal) probability measure on
the infinite dimensional space $\mathcal{P}(X)$ and where the sequence
of numbers $a_{N}$ is called the speed (or rate) which is usually
a power of $N).$ Exponential concentration around $\mu_{eq}$ appears
when $H(\mu)\geq0$ with $H(\mu)=0$ precisely when $\mu=\mu_{eq}.$
The {}``change of variables'' from $X^{N}$ to $\mathcal{P}(X)$
is made precise by using the embedding \[
j_{N}:\, X^{N}\rightarrow\mathcal{P}(X),\,\,\, j_{N}(x,...,x_{N}):=\frac{1}{N}\sum_{i}\delta_{x_{i}}\]
and then pushing forward the probability measure $\mu^{(N)}$ on $X^{N}$
to $\mathcal{P}(X)$ with the map $j_{N},$ giving a probability measure
$(j_{N})_{*}\mu^{(N)}$ on $\mathcal{P}(X).$ The meaning of \ref{eq:large dev heur},
in the sense of large deviations, is then 

\[
-\inf_{\mu\in\mathcal{G}}H(\mu)\leq\lim_{N\rightarrow\infty}\frac{1}{a_{N}}\log\int_{\mathcal{G}}(j_{N})_{*}\mu^{(N)}\leq\lim_{N\rightarrow\infty}\frac{1}{a_{N}}\log\int_{\overline{\mathcal{G}}}(j_{N})_{*}\mu^{(N)}\leq-\inf_{\mu\in\overline{\mathcal{G}}}H(\mu)\]
 for any open set $\mathcal{G}$ in $\mathcal{P}(X).$ 

In the present setting the gas can be represented by spin polarized
free fermions on $X$ coupled to a $U(1)-$gauge field $A$ and we
will write $\omega=\frac{i}{2\pi}F_{A}$ for the corresponding magnetic
two-form, normalized so that $[\omega]\in H^{2}(X,\mbox{\Z).}$ The
probability density \[
\rho^{(N)}(x_{1},...x_{N})=\frac{1}{\mathcal{Z}_{N}}\left\Vert \det\Psi\right\Vert ^{2}(x_{1},...x_{N})\]
 is the normalized Slater determinant representing the maximally filled
$N-$particle ground state. We will consider the limit of increasing
field strength, i.e. $F_{A}$ is replaced by $kF_{A}$ with $k\rightarrow\infty$
so that $N=N_{k}=Vk^{n}+o(k^{n})\rightarrow\infty,$ where $n=\dim_{\C}X$
(assuming that the underlying line bundle $L$ is ample which also
induces the spin polarization). It will be shown that an LDP of the
form \ref{eq:large dev heur} holds at a speed $Vk^{n+1}$ with a
rate functional $H(\mu)$ that may be decomposed as \begin{equation}
H(\mu)=E_{\omega}(\mu)-C\label{eq:rate function intro}\end{equation}
 where $E_{\omega}(\mu)$ is the \emph{pluricomplex energy of $\mu$}
recently introduced in \cite{bbgz} and the constant $C=C(K,\omega)$
is the pluricomplex capacity $C(K,\omega)$ (in logarithmic notation).
In the case of a Riemann surface $E_{\omega}$ is nothing but the
Dirichlet energy of a unit charge distribution $\mu$ subject to the
neutralizing exterior charge $\omega:$ \[
E_{\omega}(\mu)=\frac{1}{2}\int_{X}d\phi_{\mu}\wedge d^{c}\phi_{\mu},\,\,\,\, dd^{c}\phi_{\mu}=\mu-\omega\]
 wheere $dd^{c}=\frac{i}{2\pi}\partial\bar{\partial}$ (see section
\ref{sub:Notation-and-general}). In higher dimensions $E_{\omega}(\mu)$
may be expressed explictely in terms of the potential $\phi_{\mu}$
of $\mu$ obtained by solving the highly non-linear inhomogeonous
Monge-Ampère equation. We will recall the necessary background from
pluripotential theory in section \ref{sec:The-Monge-Ampere-operator,}.
For the moment we just point out that when $\mu$ is a volume form
the existence of a smooth potential $\phi_{\mu}$ was shown by Yau
in his celebrated solution of the Calabi conjecture \cite{y}. The
functional $E_{\omega}$ is highly non-linear when $n>1$ - for example
when $n=2$ one has \[
E_{\omega}(\mu)=\frac{1}{2}\int_{X}d\phi_{\mu}\wedge d^{c}\phi_{\mu}\wedge\omega+\frac{1}{3}\int_{X}d\phi_{\mu}\wedge d^{c}\phi\wedge(dd^{c}\phi_{\mu})\]
 and in general it is of degree $n$ in the potential $\phi_{\mu}.$ 

The relation to bosonization appears when using a different normalization
of the Slater determinant so that it becomes (at least formally) the
$N-$point function $\left\langle \left\Vert \Psi(x_{1})\right\Vert ^{2}\cdots\left\Vert \Psi(x_{N})\right\Vert ^{2}\right\rangle $
of a fermionic quantum field theory on $X$ defined by the corresponding
(massless) Dirac action. Mathematically, this amounts to multiplying
$\left\Vert \det\Psi\right\Vert ^{2}(x_{1},...x_{N})$ with the analytic
torsion. We will then show that this has the effect of cancelling
the constant $C$ in the expression \ref{eq:rate function intro}
for the rate function. In the final section of the paper we interpretate
the resulting LDP as an effective bosonization: \begin{equation}
\left\langle \left\Vert \Psi(x_{1})\right\Vert ^{2}\cdots\left\Vert \Psi(x_{N})\right\Vert ^{2}\right\rangle \sim\left\langle e^{i\phi(x_{1})}\cdots e^{i\phi(x_{N})}\right\rangle ,\label{eq:boson ans}\end{equation}
in the large $N-$limit where the right hand side is expressed in
terms of the (formal) quantum field theory for a bosonic field $\phi$
with an explicit action, coinciding, up to scaling, with a secondary
Bott-Chern class (which in physics terminology is an example of a
{}``higher-derivative action''). In the physics litterature bosonization
is a well-known phenomena in $1+1$ real dimensions (i.e. $n=1)$
\cite{st} and its present higher dimensional incarnation appears
to be somewhat surprising (however see \cite{b-l-q,f-g-m,sch} for
possibly related results). But one explanation, appart from the fact
that it only holds effectively/asymptotically, may be the extra condition
imposed by the complex/holomorphic structure when $n>1:$ $X$ is
a complex manifold and $F_{A}$ is assumed to be a $(1,1)-$ form,
i.e. it determines a holomorphic structure on the unerlying line bundle
$L$. 

Before turning to the precise formulation of the geometric setup we
emphasize that we will be considering a more general setting where
the volume form $dV$ on $X,$ used above, is replaced with a suitable
measure $\nu$ on $X$ supported on a compact set $K.$ One of the
main points of considering this more general setting is that it allows
one to treat completely\emph{ real} situations where $X$ appears
as a complexification of $K.$ For example, $K$ may be taken to be
the real $n$-sphere $S^{n}$ or the $n-$torus $T^{n}.$

\subsection{The geometric setup}

Let $L\rightarrow X$ be an ample holomorphic line bundle over a compact
complex manifold $X$ of dimension $n.$ We will denote by $H^{0}(X,L)$
the $N-$dimensional vector space of all global holomorphic sections
of $L.$ Given the geometric data $(\nu,\left\Vert \cdot\right\Vert )$
consisting of a probability measure $\nu$ on $X$ and an Hermitian
metric $\left\Vert \cdot\right\Vert $ on $L$ which is continuous
on the support $K$ of $\nu,$ one obtains an associated probability
measure $\mu^{(N)}$ on the $N-$fold product $X^{N}$ defined as
\begin{equation}
\mu^{(N)}:=\frac{1}{\mathcal{Z}_{N}}\left\Vert \det\Psi\right\Vert ^{2}(x_{1},...x_{N})\nu(x_{1})\otimes\cdots\otimes\nu(x_{N})\label{eq:def of det process intro}\end{equation}
where $\det\Psi$ is any holomorphic section of the pulled-back line
bundle $L^{\boxtimes N}$ over $X^{N}$ representing the complex line
$\Lambda^{N}H^{0}(X,L)$ and $\mathcal{Z}_{N}$ is the normalizing
constant. Concretely, we may write $\det\Psi$ as a Slater determinant:
\begin{equation}
(\det\Psi)(x_{1},...,x_{N_{k}})=\det(\Psi_{i}(x_{j}))\label{eq:slater intro}\end{equation}
for a given base $(\Psi_{i})_{=1}^{N}$ of $H^{0}(X,L).$ We will
denote $\frac{i}{2\pi}$ times the curvature two-form (current) of
the metric on $L$ by $\omega.$ It will be convenient to take the
pair $(\omega,\nu),$ which will refer to as a\emph{ weighted measure},
as the given geometric data. The\emph{ empirical measure} of the ensemble
above is the following random measure: \begin{equation}
(x_{1},...,x_{N})\mapsto\delta_{N}:=\sum_{i=1}^{N}\delta_{x_{i}}\label{eq:intro random measure}\end{equation}
which associates to any $N-$particle configuration $(x_{1},...,x_{N})$
the sum of the delta measures on the corresponding points in $X.$
In probabilistic terms this setting hence defines a \emph{determinantal
random point process on $X$ with $N$ particles} \cite{h-k-p,j2}. 

If the correponding $L^{2}-$norm on $H^{0}(X,L)$ \[
\left\Vert \Psi\right\Vert _{X}^{2}=\left\langle \Psi,\Psi\right\rangle _{X}:=\int_{X}\left\Vert \Psi(x)\right\Vert ^{2}d\nu(x)\]
is non-degenerate (which will always be the case in this paper) then
the probability measure $\mu^{(N)}$ on $X^{N}$ may be expressed
as a determinant of the\emph{ Bergman kernel} of the Hilbert space
$(H^{0}(X,L),\left\Vert \cdot\right\Vert _{X}),$ i.e. the integral
kernel of the corresponding orthogonal projection. But one virtue
of the definition \ref{eq:def of det process intro} is that it admits
a natural generalization to\emph{ $\beta-$ensembles,} obtained by
replacing the power $2$ with a general real positive power $\beta$
(which plays the the role of \emph{inverse temperature} from the point
of view of statistical mechanics - see section \ref{sub:Proof-ldp-non-cpt}
for such generalizations). Replacing $L$ with its $k$ th tensor
power, which we will write in additive notation as $kL,$ yields,a
sequence of point processes on $X$ of an increasing number $N_{k}$
of particles. We will be concerned with the asymptotic situation when
$k\rightarrow\infty.$ This correspondence to a large $N-$limit of
many particles, since \[
N_{k}:=\dim H^{0}(X,kL)=Vk^{n}+o(k^{n})\]
where the constant $V$ is, by definition, the \emph{volume} of $L.$ 

Of course, we can also view point processes above as defined on the
support $K$ of the measure $\nu.$ There is also a slight variant
of the setting above where $K$ may be taken as the non-compact sets
$\C^{n}$ or $\R^{n}$ (which may be identified with subsets of $X:=\P^{n}$
the complex projective space) - see section \ref{sub:The-non-compact-setting}.

\subsection{Statement of the main results}

\subsubsection{A general large deviation principle}

As is well-known the density of the one-point correlation measure,
i.e of the expectation $\E(\delta_{N_{k}})$ of the emprical measure,
is precisely the point-wise norm of the corresponding Bergman kernel
on the diagonal. In the case when the curvature form $\omega$ on
$L$ is smooth and positive and the measure $\nu$ is a volume form
on $X$ it then follows from well-known Bergman kernel asymptotics
that \[
\E(\frac{\delta_{N_{k}}}{N_{k}})\rightarrow\frac{1}{V}\frac{\omega^{n}}{n!}\]
 weakly as $k\rightarrow\infty$ where $\E$ denotes the expectation
with respect to the determinantal ensemble $(X^{N},\mu^{(N)})$ (in
fact there is a complete asymptotic expansion in powers of $1/k$
as first shown by Catlin and Zeldich; see the survey \cite{z} and
references therein and also \cite{d-k} for a path integral approach).
For a curvature form $\omega$ which is smooth, but not necesserily
semi-positive, it was shown in \cite{berm1} that the the previous
convergence still holds if the right hand side above is replaced by
the pluripotential \emph{equilibrium measure }$\mu_{eq}$ on $X$
associated to $\omega$ which may be written as \[
\mu_{eq}=1_{D}\frac{1}{V}\frac{\omega^{n}}{n!}\]
where $D$ is a certain compact subset of $X.$ In the case of a Riemann
surface (i.e. $n=1)$ the set $D$ may be obtained by solving a free
boundary value problem for the Laplace operator (in the physics litterature
the set $D$ appears as a limiting Coulomb gas plasma/fluid in the
context of the Quantum Hall effect, as well as an eigenvalue droplet
in the normal random matrix model - see \cite{za} and references
therein). For a general dimension $n$ the set $D$ is obtained similarly
but using the Monge-Ampère operator, which is fully non-linear (see
section \ref{sec:The-Monge-Ampere-operator,}). For general geometric
data $(\omega,\nu),$ with $K$ denoting the compact set obtained
as the support of $\nu,$ the convergence towards the equilibrium
measure $\mu_{eq}$ (in the sense of pluripotential theory) associated
to the weighted set compact set $(\omega,K)$ was shown to hold very
recently in \cite{b-b-w} under very weak regularity assumptions on
the measure $\nu$ its support $K.$ 

Our first main result shows that the convergence of the empirical
measure is in fact \emph{exponential} in probability. More precisely,
we have the following \emph{Large Deviation Principle} \emph{(LDP})
for the\emph{ laws} of the normalized empirical measures, i.e. for
the push forward of the probability measure $\mu^{(N_{k})}$ on $X^{N}$
to the space $\mathcal{P}(K)$ of all probability measures on $K,$
under the normalized map $\delta_{N}/N.$
\begin{thm}
\label{thm:intro large dev}(LDP) Let $(\nu,\omega)$ be a weighted
measure and assume that the the measure $\nu$ has the Bernstein-Markov
property and that its support $K$ is non-pluripolar. Then the laws
of the normalized emipirical measures of the determinantal point process
\ref{eq:def of det process intro} satisfy a \emph{large deviation
principle} (LDP) with a \emph{good rate functional $H(=H_{(K,\omega)})$}
and speed $Vk^{n+1}(\sim kN_{k}).$ On the space $\mathcal{P}(K)$
the rate functional \emph{$H(\mu)$} is minimized (and vanishes) precisely
on the pluripotential equilibrium measure $\mu_{eq}(=\mu_{(K,\omega)}).$
More generally, the upper bound corresponding to the LDP holds for
\emph{any} $(\nu,\omega).$
\end{thm}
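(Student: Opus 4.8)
The plan is to prove the LDP by the standard strategy for determinantal/Coulomb-type ensembles, adapted to the pluripotential setting, working on the level of the functionals rather than the point configurations. The key object is the (negative of the) logarithm of the density, which up to the partition function is $-\log\|\det\Psi\|^2(x_1,\dots,x_{N_k})$. First I would rewrite this quantity, after dividing by the speed $Vk^{n+1}$, in terms of the empirical measure $\delta_{N_k}/N_k$. The crucial input is that $-\frac{1}{N_k^2}\log\|\det\Psi\|^2$ is, to leading order, a discretization of an energy functional of the empirical measure — essentially a weighted logarithmic energy in the Riemann surface case and its pluripotential analogue in general. One shows that along sequences $\delta_{N_k}/N_k\to\mu$ this discretized energy converges to the continuous energy functional, modulo the diagonal terms which one controls using the Bernstein–Markov property of $\nu$ (this is precisely what handles the $\log\|\cdot\|$ singularities and lets one replace the reference measure $\nu$ by a more convenient one at subexponential cost). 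I would phrase the energy in the form appearing in \cite{bbgz}: the functional $\mu\mapsto E_\omega(\mu)$ built from solving the Monge–Ampère equation, so that the candidate rate functional is $H(\mu)=E_\omega(\mu)-C(K,\omega)$.

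Next I would establish the two halves of the LDP separately. For the \emph{upper bound} (the part claimed to hold for arbitrary $(\nu,\omega)$), I would use the weak-$*$ compactness of $\mathcal P(K)$ together with lower semicontinuity of $E_\omega$: one covers a closed set by small balls, estimates $\mathcal Z_{N_k}^{-1}\int \|\det\Psi\|^2\,d\nu^{\otimes N_k}$ over each ball by $\exp(-Vk^{n+1}(\inf_{\text{ball}} H - \epsilon))$ using the Gärtner–Ellis-type / direct combinatorial bound on the number of terms, and then uses that $\mathcal P(K)$ is compact so finitely many balls suffice. The Bernstein–Markov property enters to show $\frac{1}{Vk^{n+1}}\log\mathcal Z_{N_k}\to -C(K,\omega)$, i.e. the asymptotics of the partition function; here one invokes the known result (from \cite{bbgz} or \cite{b-b-w}) identifying this limit with the pluricomplex capacity via a variational principle, with the equilibrium measure $\mu_{eq}$ as maximizer. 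For the \emph{lower bound} on an open set $\mathcal G$, the standard move is: given $\mu\in\mathcal G$ with $E_\omega(\mu)<\infty$, approximate $\mu$ by a measure with bounded potential (a density, or a convolution-regularized measure — using that non-pluripolarity of $K$ guarantees such approximants exist within $\mathcal P(K)$), place $N_k$ near-optimal points (Fekete-type points for the measure, matching \cite{b-b-w}'s interpolation-node construction), and lower-bound the density on a neighborhood of this configuration, integrating over a product of small balls to get the factor $(\text{vol})^{N_k}$ which is subexponential at speed $k^{n+1}$. Continuity of $E_\omega$ along the approximation (an $\Gamma$-limsup statement) then gives $\liminf \frac{1}{Vk^{n+1}}\log\mu^{(N_k)}(\mathcal G)\ge -H(\mu)$.

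Finally I would verify that $H$ is a good rate functional: $H\ge 0$ with equality exactly at $\mu_{eq}$ follows from the variational characterization of the equilibrium measure as the unique maximizer of $E_\omega - \langle\text{capacity functional}\rangle$; lower semicontinuity and compactness of sublevel sets $\{H\le c\}$ follow from lower semicontinuity of $E_\omega$ (a known property of the pluricomplex energy from \cite{bbgz}) together with compactness of $\mathcal P(K)$. The main obstacle I expect is \emph{not} the abstract large-deviation machinery but the analytic control of the discretized energy: showing that $-\frac{1}{N_k^2}\log\|\det\Psi\|^2(x_1,\dots,x_{N_k})$ genuinely $\Gamma$-converges (both $\liminf$ and $\limsup$) to $E_\omega$ evaluated on the limiting empirical measure. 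Unlike the $n=1$ case, where $-\log\|\det\Psi\|^2$ is manifestly a sum of pairwise logarithmic interactions plus a one-body potential, in higher dimensions the energy $E_\omega$ is the fully nonlinear degree-$n$ Monge–Ampère energy and there is no elementary two-body decomposition; the link must go through the asymptotics of $\log\mathcal Z_{N_k}$ for \emph{varying} weighted measures (equivalently, a "transported" version of the Bernstein–Markov/capacity asymptotics), which is exactly where the heavy pluripotential-theoretic input — partial Legendre duality for $E_\omega$, the differentiability of the energy at equilibrium, and the results of \cite{bbgz,b-b-w} — has to be brought to bear. The weak regularity hypotheses (Bernstein–Markov, non-pluripolar $K$) are precisely tuned so that these cited results apply; the LDP upper bound survives without Bernstein–Markov because it only needs an upper bound on $\log\mathcal Z_{N_k}$, which holds in general.
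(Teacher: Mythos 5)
Your overall architecture (tilted partition function asymptotics, lower semicontinuity of $E_{\omega}$, compactness of $\mathcal{P}(K)$ for goodness, upper bound by tilting/Chebyshev over small balls using the $L^{\infty}\geq L^{2}$ comparison, which indeed needs no Bernstein--Markov hypothesis) is sound and close in spirit to the paper's second, G\"artner--Ellis-based proof. But the concrete mechanism you propose for the \emph{lower bound} has a genuine gap. You want to approximate $\mu$ by a nicer measure, place Fekete-type points for it, lower-bound the density of $\mu^{(N_k)}$ on a product of small balls around that configuration, and pay only a subexponential volume factor $(\mathrm{vol})^{N_k}$. The problem is that this volume factor must be computed with respect to $\nu^{\otimes N_k}$, and the only hypothesis on $\nu$ is the \emph{qualitative} Bernstein--Markov property, which controls sup-norms by $L^{2}(\nu)$-norms but gives no lower bound whatsoever on the $\nu$-mass of small balls around prescribed points of $K$. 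In the geometrically interesting cases covered by the theorem ($K$ a totally real submanifold such as $S^{n}$ or $T^{n}$, a real algebraic set, the boundary of a domain, etc.) the configuration points lie on a set of measure zero in $X$, so neither a sub-mean-value argument over complex balls nor any naive continuity-of-the-density argument localizes to $K$; your $(\mathrm{vol})^{N_k}$ factor can simply be zero or uncontrollably small. Moreover, in higher dimensions you have no pointwise formula for $\left\Vert \det\Psi_{k}\right\Vert ^{2}$ at a given configuration, so even the value at the Fekete configuration is only known through the $L^{\infty}$ asymptotics, not through an explicit two-body expression that you could perturb on a neighborhood. A secondary gap: approximating a general finite-energy $\mu$ inside $\mathcal{P}(K)$ with convergent energies is not achieved by convolution; in the paper this is Lemma \ref{lem:appr}, which takes continuous $\omega$-psh $\phi_{j}\downarrow\phi_{\mu}$ and sets $\mu_{j}=MA(P_{K}\phi_{j})$, relying on the ``convergence in energy'' results of \cite{bbgz}.

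The paper's direct proof avoids any pointwise lower bound on the density. For a continuous candidate potential $\phi$ it localizes the $L^{2}$-mass to the set $A_{k\phi}$ of configurations where the $\phi$-tilted density is \emph{not} exponentially small (formula \ref{eq:the set a}), applies Jensen's inequality to produce the linear statistic term $\int\phi\, MA(P_{K}\phi)$ (Lemma \ref{lem:localization}), and then uses the equidistribution theorem of \cite{b-b-w} (second point of Theorem \ref{thm:[asym-Fekete]}) to show $A_{k\phi_{j}}\subset(\delta_{N_{k}}/N_{k})^{-1}\mathcal{G}$ for large $k$; the orthogonality relation \ref{eq:og relatgion} then identifies the resulting constant with $-E_{\omega}(\mu_{j})$. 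Alternatively, the second proof bypasses the lower-bound construction entirely by invoking the abstract G\"artner--Ellis theorem, whose hypothesis is exactly the Gateaux differentiability of $u\mapsto\mathcal{E}_{\omega}(P_{(K,\omega)}u)$ proved in \cite{b-b} (Theorem \ref{thm:diff thm}). If you want to salvage your outline, you should replace the ``Fekete points plus small balls'' step by one of these two mechanisms; as written, that step would fail for general Bernstein--Markov measures $\nu$.
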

As is shown in section \ref{sub:Proof-ldp-non-cpt} and \ref{sub:A-generalized-LDP beta},
respectively, the LDP in the previous theorem can be adapted to two
other general settings:
\begin{itemize}
\item The LDP holds in a setting where $K$ is allowed to be non-compact
\item The LDP holds for $\beta-$ensembles (section \ref{sub:A-generalized-LDP beta})
\end{itemize}
The proof in the non-compact setting proceeds by reducing to the previous
case when $K$ is compact. In particular we obtain the following LDP
for the Vandermonde determinant $\Delta^{(N_{k})}$ obtained by taken
the base $(\Psi_{i})_{=1}^{N_{k}}$ in \ref{eq:slater intro} to be
multinomials in $\C^{n}$ of total degree at most $k$ (see Cor \ref{cor:ldp for vanderm with rate fu}
and section \ref{sub:Remarks-on-normalizations}  for a description
of the corresponding rate and energy functionals). 
\begin{cor}
\label{cor:vanderm intro}Let $K=\R^{n}$ (or $K=\C^{n})$ and let
$\nu$ be the Euclidean measure on $K.$ Assume that $\Phi$ is a
continuous function on $K$ with super logarithmic growth at infinity
(see \ref{eq:ass on growth of phi} ). Then the push-forward $\Gamma_{k}$
of the weighted Vandermonde measure \[
\tilde{\mu}_{\Phi}^{(N_{k})}:=|\Delta^{(N_{k})}(z_{1},....,z_{N_{k}})|^{2}e^{-k\Phi(z_{1})}\cdots e^{-k\Phi(z_{N_{k}})}\nu^{\otimes N_{k}}\]
 under the normalized map $\delta_{N_{k}}/N_{k}$ (formula \ref{eq:intro random measure})
satisfies an LDP at a speed $k^{n+1}$ and with a good rate functional,
which in the case $n=1$ coincides the the weighted logarithmic energy
\cite{s-t}. More generally, the LDP holds (at a speed $\beta_{k}k^{n+1}$)
when the density of $\tilde{\mu}_{\Phi}^{(N_{k})}$ is raised to a
positive power $\beta_{k}$ as long as $\beta_{k}\leq C$ and $\beta_{k}k\rightarrow\infty.$
\end{cor}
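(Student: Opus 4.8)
The plan is to deduce the non-compact statement from Theorem \ref{thm:intro large dev} by a compactification-and-exhaustion argument. First I would realize $K=\C^n$ (resp. $\R^n$) as a Zariski-open subset of $X:=\P^n$, and take $L:=\mathcal{O}(1)$, so that $H^0(X,kL)$ is exactly the space of polynomials in $z_1,\dots,z_n$ of total degree at most $k$ and the Vandermonde $\Delta^{(N_k)}$ becomes a section of $L^{\boxtimes N_k}$ as in \ref{eq:slater intro}. The weight $e^{-k\Phi}$ is absorbed into a (singular) metric $\|\cdot\|$ on $L$: writing $\|\cdot\|_{FS}$ for the Fubini--Study metric and setting $\|s\|^2 := \|s\|_{FS}^2\, e^{-k(\Phi-\psi_0)}$ for a suitable fixed smooth $\psi_0$ comparing the Euclidean and Fubini--Study volumes, one checks that $\tilde{\mu}_\Phi^{(N_k)}$ is, up to a normalizing constant, of the form \ref{eq:def of det process intro} with $\nu$ the Euclidean measure on $\C^n$ and $\omega$ the curvature current of this metric. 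The super-logarithmic growth hypothesis \ref{eq:ass on growth of phi} is exactly what is needed to ensure the total mass $\mathcal{Z}_{N_k}=\int \tilde{\mu}_\Phi^{(N_k)}$ is finite and that the weighted equilibrium problem for $(\Phi,\C^n)$ is well-posed, with a compact support $D$ for the equilibrium measure contained in a large ball.

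Next I would verify the two hypotheses of Theorem \ref{thm:intro large dev}. Non-pluripolarity of the "support": although $\nu$ lives on all of $\C^n$, the growth of $\Phi$ forces the relevant action to concentrate on a compact set, so one replaces $K$ by a large closed ball $\bar B_R$ containing $D$; Euclidean Lebesgue measure on a ball is certainly non-pluripolar and has the Bernstein--Markov property (indeed it is a doubling measure with the Markov inequality, a standard sufficient condition). The only real content is showing that the contribution to the large-$N$ asymptotics coming from outside $\bar B_R$ is negligible at the speed $k^{n+1}$: this is a tail estimate where the super-logarithmic growth of $\Phi$ beats the at-most-logarithmic size of $\log|\Delta^{(N_k)}|$, and it is the same mechanism by which one proves exponential tightness of the laws $\Gamma_k$. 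Having established exponential tightness and the negligibility of the tails, the LDP for $\Gamma_k$ follows from the compact case applied on $\bar B_R$, with the good rate functional obtained by restriction; its identification in terms of the weighted pluricomplex energy (and the fact that it reduces to the Mhaskar--Saff--Totik weighted logarithmic energy when $n=1$) then comes from the explicit description of $H=E_\omega-C$ recalled in the excerpt, specialized to $(\omega,\nu)$ as above.

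For the $\beta$-ensemble refinement, I would run the same reduction but track the speed: raising the density to the power $\beta_k$ multiplies the effective "inverse temperature", so the natural speed becomes $\beta_k k^{n+1}$. The assumption $\beta_k\le C$ keeps the weight $e^{-\beta_k k\Phi}$ from growing too fast to spoil the Bernstein--Markov comparison (uniformly in $k$), while $\beta_k k\to\infty$ guarantees that the exponential factor still dominates the polynomial Vandermonde term, so the equilibrium measure — and hence the rate functional — is unchanged; one then invokes the $\beta$-ensemble version of the LDP from section \ref{sub:A-generalized-LDP beta}.

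The main obstacle I expect is the tail/tightness estimate in the non-compact reduction: one must show quantitatively that configurations with points far out in $\C^n$ cost an action of order $\beta_k k^{n+1}$ (not merely $o$ of it), uniformly in $k$ and under the variable power $\beta_k$. This requires a careful pointwise upper bound on $\|\det\Psi\|^2$ away from a large ball — controlling $\log|\Delta^{(N_k)}|$ by $\sum_i (\text{const})\log(1+|z_i|)$ and playing it against $\sum_i \beta_k k\,\Phi(z_i)$ — together with a matching lower bound on $\mathcal{Z}_{N_k}$, so that the quotient is exponentially small at the claimed speed. Everything else is either a direct citation of Theorem \ref{thm:intro large dev} (and its $\beta$-variant) or the standard dictionary between weighted potentials on $\C^n$ and metrics on $\mathcal{O}(1)$ over $\P^n$.
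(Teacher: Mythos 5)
Your overall strategy --- compactify to $\P^{n}$ with $L=\mathcal{O}(1)$, localize to a large ball containing the support of the weighted equilibrium measure, verify a Bernstein--Markov property, and feed the compact-case machinery --- is the route the paper itself takes in section \ref{sub:Proof-ldp-non-cpt}. But the step you lean on to make the reduction, namely that the contribution from outside a fixed ball $\bar{B}_{R}$ is ``negligible at the speed $k^{n+1}$'' and, in your final paragraph, that ``configurations with points far out in $\C^{n}$ cost an action of order $\beta_{k}k^{n+1}$'', is false as stated and cannot carry the transfer. For a fixed $R$ the event that a \emph{single} particle escapes $\bar{B}_{R}$ is only suppressed at speed $k$: the Vandermonde has degree at most $k$ in each variable, so $|\Delta^{(N_{k})}|^{2}\lesssim C|z_{i}|^{2k}$ in the escaping variable, while the weight contributes $e^{-k(1+\epsilon)\log|z_{i}|^{2}}$, giving an escape probability of order $R^{-c\epsilon k}$ rather than $e^{-ck^{n+1}}$. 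Hence the original ensemble is \emph{not} exponentially close (at the LDP speed) to the ensemble conditioned to $\bar{B}_{R}$, and one cannot simply quote Theorem \ref{thm:intro large dev} on the ball and discard the complement. What saves the day is that such rare single-particle excursions do not move the empirical measure; accordingly the paper formulates the tail control at the level of measures, proving exponential tightness of $\Gamma_{k}$ for the compact sets $\{\mu:\int(\phi-P_{K}\phi)\,d\mu\leq3\alpha\}$ (section \ref{sub:Application-of-the-ell-g}), which rests on the pointwise bound \ref{eq:upper bound in non-cpt} of the density by $C_{\epsilon}^{N_{k}}e^{\epsilon kN_{k}}e^{-k\sum_{i}(\phi-P_{K}\phi)(x_{i})}$. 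If you insist on your conditioned-ensemble route you would need an exponential-approximation argument in the empirical-measure topology plus convergence of the rate functionals as $R\rightarrow\infty$, and proving that again requires exactly this estimate, which your proposal does not supply.

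The two missing ingredients are precisely what Lemma \ref{lem:pf of non-cpt ldp} provides: (i) a Bernstein--Markov-type inequality valid on all of the non-compact $K$, deduced from the classical BM property on a ball \emph{together with the domination principle} (this is also how the hypothesis of the non-compact theorem is checked in the proof of Corollary \ref{cor:ldp for vanderm with rate fu}); and (ii) the exact localization $P_{(K,\omega_{0})}\phi=P_{(K\cap B_{R},\omega_{0})}\phi$ for $R$ large, which holds because the equilibrium measure has compact support and which transfers the free-energy asymptotics \ref{eq:conv of free energy in non-cpt} from the compact case with no limiting argument in $R$; the paper then concludes via the abstract G\"{a}rtner--Ellis theorem rather than by transporting an LDP from a truncated ensemble. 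Two further points you gloss over: since $\Phi$ has super-logarithmic growth it is \emph{not} the weight of a locally bounded metric on $\mathcal{O}(1)$ (section \ref{sub:The-non-compact-setting}), so one must work with the reference weight $\Phi_{0}=\log^{+}|z|^{2}$ and the proper function $\phi=\Phi-\Phi_{0}$ rather than absorbing $e^{-k\Phi}$ into a metric as you propose (your formula with a factor $k$ inside the metric is not meaningful for fixed $L$); and the identification of the rate functional, i.e. the decomposition \ref{eq:decomp of weighted energ-1} and the $n=1$ reduction to the weighted logarithmic energy, requires the separate approximation arguments of section \ref{sub:Remarks-on-normalizations}, not just the compact-case formula $H=E_{\omega}-C$. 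The $\beta$-ensemble part also needs the \emph{strong} Bernstein--Markov property \ref{eq:bm strong}, which your sketch does not address.
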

As another corollary (see section \ref{sub:Applications-to-sections}
) we obtain an LDP for ensembles defined by holomorphic sections vanishing
to high order along a given hypersurface in $X.$ Physically, this
allows one to consider situations where the fermion ground-state has
a filling fraction stricly below $1,$ as in the the fractional Quantum
Hall effect, which is well-known in the case when $n=1.$ We also
point out some relations to Laplacian growth \cite{h-m}.

It should be stressed that the assumptions in Theorem \ref{thm:intro large dev}
are very weak and they are satisfied in geometrically natural situations.
For example, $\nu$ may be taken to be defined by integrating against
a volume form on a smooth domain $K$ \cite{b-b-w}. The measure $\nu$
can also be taken to be defined by the Haussdorf measure of a set
$K$ which is either a real submanifold of real dimension $2n-1$
or a real algebraic variety of real dimension $n$ (see section \ref{sec:Examples}). 

Given a function $\phi$ on $X$ we denote by 

\begin{equation}
\delta_{N_{k},\lambda}(\phi):=\mbox{Prob}\left\{ \left|\frac{1}{N_{k}}(\phi(x_{1})+....+\phi(x_{N_{k}}))-\int_{X}\mu_{eq}\phi\right|>\lambda\right\} \label{eq:def of tail}\end{equation}
the\emph{ tail of the linear statistic} $\phi(x_{1})+....+\phi(x_{N_{k}}).$ 
\begin{cor}
\label{cor:non-sharp tail intro}Let $\phi$ be a continuous function
on $X.$ 
\begin{itemize}
\item If the support $K$ of the measure $\nu$ is not pluripolar, then
the tail \ref{eq:def of tail} satisifes $\delta_{k,\epsilon}(\phi)\leq e^{-Ck^{(n+1)}}$
for some positive constant $C$ depending on $\phi$ and $\epsilon.$ 
\item In the case when $X$ is a Riemann surface, $K=X$ and the curvature
current $\omega$ of the metric on $L$ is semi-positive (so that
$\mu_{eq}=\omega)$ the following more precise estimate holds: \begin{equation}
\delta_{N_{k},\lambda}(\phi)\leq2\exp(-N_{k}^{2}\left(\frac{2V\lambda^{2}}{\left\Vert d\phi\right\Vert _{X}^{2}}(1+o(1))\right))\label{eq:exp conc general riemann s}\end{equation}
 where the error term $o(1)$ denotes a sequence tending to zero as
$k\rightarrow\infty$ (but depending on $\phi$). 
\end{itemize}
\end{cor}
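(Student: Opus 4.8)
The plan is to read both statements off the upper bound of the large deviation principle of Theorem~\ref{thm:intro large dev}, which, as recorded there, is available for arbitrary weighted measures $(\nu,\omega)$. First I would note that under the embedding $j_{N_k}:=\delta_{N_k}/N_k$ the normalized linear statistic $\tfrac1{N_k}(\phi(x_1)+\cdots+\phi(x_{N_k}))$ equals $\int_X\phi\,d(j_{N_k})$, so that the tail $\delta_{N_k,\lambda}(\phi)$ of \ref{eq:def of tail} is precisely the $(j_{N_k})_*\mu^{(N_k)}$-measure of the set $I_\lambda:=\{\mu\in\mathcal P(K):|\int_X\phi\,d\mu-\int_X\phi\,d\mu_{eq}|\ge\lambda\}$. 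Since $\phi$ is continuous, $\mu\mapsto\int_X\phi\,d\mu$ is weakly continuous, hence $I_\lambda$ is weakly closed and does not contain $\mu_{eq}$, and the LDP upper bound gives $\limsup_k\frac1{Vk^{n+1}}\log\delta_{N_k,\lambda}(\phi)\le-\inf_{I_\lambda}H$.

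For the first bullet it remains to check $\inf_{I_\lambda}H>0$. Because $H$ is a \emph{good} rate functional, the closed set $I_\lambda$ meets a sublevel set $\{H\le c\}$ in a compact set; a minimizing sequence then converges weakly to a point of $I_\lambda$, which by lower semicontinuity achieves the infimum, and by Theorem~\ref{thm:intro large dev} it cannot be $\mu_{eq}$ (the unique zero of $H$ -- this is where non-pluripolarity of $K$ is used), so $c_\epsilon:=\inf_{I_\epsilon}H>0$ depends only on $\phi$ and $\epsilon$. The bound $\delta_{N_k,\epsilon}(\phi)\le e^{-Ck^{n+1}}$ with $C=\tfrac12Vc_\epsilon$ then follows for all large $k$, the finitely many small $k$ being absorbed by shrinking $C$.

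For the second bullet I would make $\inf_{I_\lambda}H$ explicit in the Riemann surface case: there $n=1$, $K=X$, $\omega\ge0$ and $\mu_{eq}=\omega$, and since $H$ vanishes at $\mu_{eq}$ while $E_\omega(\omega)=0$ (the potential of $\omega$ relative to $\omega$ is constant), the capacity constant in the decomposition \ref{eq:rate function intro} is zero, so $H(\mu)=E_\omega(\mu)=\tfrac12\int_X d\phi_\mu\wedge d^c\phi_\mu$ with $dd^c\phi_\mu=\mu-\omega$. For $\mu\in I_\lambda$ of finite energy, integration by parts and the Cauchy--Schwarz inequality for the (symmetric, positive semidefinite) Dirichlet pairing $(\psi_1,\psi_2)\mapsto\int_X d\psi_1\wedge d^c\psi_2$ on the surface give $\lambda\le|\int_X\phi\,(\mu-\omega)|=|\int_X d\phi\wedge d^c\phi_\mu|\le\|d\phi\|_X\,\|d\phi_\mu\|_X$, whence $H(\mu)=\tfrac12\|d\phi_\mu\|_X^2\ge\lambda^2/(2\|d\phi\|_X^2)$ (measures of infinite energy satisfying this trivially). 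Feeding $\inf_{I_\lambda}H\ge\lambda^2/(2\|d\phi\|_X^2)$ into the LDP upper bound and rewriting the speed $Vk^{n+1}=Vk^2=N_k^2/V\,(1+o(1))$ (using $N_k=Vk+o(k)$), I obtain $\delta_{N_k,\lambda}(\phi)\le\exp(-N_k^2\,\tfrac{2V\lambda^2}{\|d\phi\|_X^2}\,(1+o(1)))$ for all large $k$, which is \ref{eq:exp conc general riemann s} up to the harmless factor $2$; the $o(1)$ is made into a single sequence by diagonalizing over $\varepsilon\downarrow0$.

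The conceptual content is thus a soft reading of the LDP; the steps needing care are the analytic ones. Justifying the integration by parts and the Cauchy--Schwarz estimate at the level of generality of possibly singular finite-energy potentials $\phi_\mu$ is routine once one works in the natural finite-energy class, and it also explains why in the second bullet $\phi$ must be taken regular enough ($W^{1,2}$, e.g.\ Lipschitz) for $\|d\phi\|_X$ to be finite. I do not use positivity of $\mu$ in bounding $\inf_{I_\lambda}H$: should the naive extremal measure $\omega+c\,dd^c\phi$ fail to be positive (for large $\lambda$, or at points where $\omega$ vanishes), the true infimum is only larger and the probability bound only sharper. The one genuine nuisance I expect is bookkeeping of the normalization constants relating $\|d\phi\|_X$ and $E_\omega$ of Section~\ref{sub:Notation-and-general}, so that the coefficient in the exponent comes out as exactly $2V\lambda^2/\|d\phi\|_X^2$.
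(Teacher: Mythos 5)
Your proof is correct and essentially coincides with the paper's: the first bullet is obtained from the LDP upper bound together with the fact that $H$ is a good (lsc) rate functional whose unique zero is $\mu_{eq}$, and the second by noting that $C(X,\omega)=0$ so $H=E_{\omega}$, writing $E_{\omega}(\mu)$ as half the Dirichlet norm of the potential of $\mu-\omega$ and applying Cauchy--Schwarz in the Dirichlet space, exactly as the paper does (the paper works with one-sided sets $F_{\lambda}$ where you use the two-sided set $I_{\lambda}$, an immaterial difference accounting for the factor $2$). The only point you leave open -- the normalization bookkeeping turning $\inf_{I_{\lambda}}H\geq\lambda^{2}/(2\left\Vert d\phi\right\Vert _{X}^{2})$ into the stated coefficient $2V\lambda^{2}/\left\Vert d\phi\right\Vert _{X}^{2}$ -- is treated just as implicitly in the paper (which sets $V=1$ and never pins down the convention for $\left\Vert d\phi\right\Vert _{X}$), so it is a matter of conventions rather than a gap.
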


\subsubsection{\label{sub:Analytic-torsion,-exponentially}Analytic torsion, exponentially
small eigenvalues and effective bosonization}

Now we fix a smooth Hermitian metric $h_{X}$ on $X$ and take the
measure $\nu=dV$ above to be its volume form (so that $K=X)$. We
will also assume that the Hermitian metric on $L$ is smooth - the
most interesting case will be when its curvature $\omega$ is not
semi-positive. We will then obtain a reformulation of the LDP in Theorem
\ref{thm:intro large dev} involving the regularized determinants
of the associated $\overline{\partial}-$Laplacians $\Delta_{\bar{\partial}}^{0,q}$
acting on the space of $(0,q)-$forms with values in $kL.$ 
\begin{thm}
(E\label{thm:(Effective-bosonization)-Let}ffective bosonization)
Let $L\rightarrow X$ be a line bundle over a compact complex manifold
$X$ and equip $L$ and $X$ with smooth metrics as above. Then the
push forward of the measures \begin{equation}
\prod_{q=1}^{n}(\det\Delta_{\bar{\partial}}^{0,q})^{(-1)^{q+1}q}\left\Vert \det\Psi\right\Vert ^{2}(x_{1},...x_{N_{k}})dV(x_{1})\otimes\cdots\otimes dV(x_{N})\label{eq:fermion measure intro}\end{equation}
on $X^{N_{k}}$ under the map $\delta_{N}/N$ where $\delta_{N}$
is the empricial measure \textup{\ref{eq:intro random measure}} satisfy
a LDP with a good rate functional $E_{\omega}(\mu)$ (the pluricomplex
energy wrt $\omega).$ In particular, \begin{equation}
\frac{1}{k^{n+1}}\sum_{q=1}^{n}q(-1)^{q}\log\det((\Delta_{\bar{\partial}}^{0,q})\rightarrow\inf_{\mu\in\mathcal{P}(X)}E_{\omega}(X)\label{eq:conv of anal tor intro}\end{equation}
as $k\rightarrow\infty.$
\end{thm}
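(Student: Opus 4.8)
The plan is to deduce the effective bosonization LDP from the general LDP of Theorem \ref{thm:intro large dev} by tracking precisely how the extra multiplicative factor $\prod_{q=1}^{n}(\det\Delta_{\bar{\partial}}^{0,q})^{(-1)^{q+1}q}$ affects the normalizing constant and hence the rate functional. First I would recall that, taking $\nu=dV$ (which has the Bernstein--Markov property since $dV$ is a smooth volume form) and $K=X$ non-pluripolar, Theorem \ref{thm:intro large dev} already gives an LDP at speed $Vk^{n+1}$ with good rate functional $H(\mu)=E_{\omega}(\mu)-C(X,\omega)$, where the constant $C(X,\omega)$ is the pluricomplex capacity in logarithmic notation. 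The key point is that the measures \ref{eq:fermion measure intro} differ from $\mu^{(N_{k})}$ only by the deterministic scalar $T_{k}:=\prod_{q=1}^{n}(\det\Delta_{\bar{\partial}}^{0,q})^{(-1)^{q+1}q}$ (the Ray--Singer analytic torsion of $kL$), together with the replacement of the normalization $\mathcal{Z}_{N_{k}}$ by $1$. Since multiplying a probability measure by a constant and renormalizing does not change the pushforward measure at all, the only way the rate functional can change is through the large-$k$ asymptotics of $\log(T_{k}\,\mathcal{Z}_{N_{k}})$ versus $\log\mathcal{Z}_{N_{k}}$: in the standard transfer argument (Varadhan / the reformulation used to prove Theorem \ref{thm:intro large dev}) the constant $C$ in the rate functional arises precisely as $\lim_{k}\frac{1}{Vk^{n+1}}\log\mathcal{Z}_{N_{k}}$ up to the universal term, so inserting $T_{k}$ shifts the rate functional by $-\lim_{k}\frac{1}{Vk^{n+1}}\log T_{k}$.

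So the heart of the matter is the asymptotics
\begin{equation}
\frac{1}{Vk^{n+1}}\log T_{k}=\frac{1}{Vk^{n+1}}\sum_{q=1}^{n}(-1)^{q+1}q\log\det\Delta_{\bar{\partial}}^{0,q}\longrightarrow C(X,\omega)
\label{eq:torsion asymp plan}
\end{equation}
as $k\to\infty$; granting this, the new rate functional is $E_{\omega}(\mu)-C(X,\omega)+C(X,\omega)=E_{\omega}(\mu)$, which is still a good rate functional (lower semicontinuous with compact sublevel sets, as established in \cite{bbgz} and recalled in section \ref{sec:The-Monge-Ampere-operator,}), and it is non-negative with minimum value $\inf_{\mu}E_{\omega}(\mu)$ attained at $\mu_{eq}$ — the infimum need no longer be $0$ here, but that is fine. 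The asymptotic identity \ref{eq:conv of anal tor intro} is then an immediate consequence of the LDP applied to the whole space $\mathcal{P}(X)$ (which is both open and closed), giving $\frac{1}{Vk^{n+1}}\log\int_{\mathcal{P}(X)}(j_{N_{k}})_{*}(\text{measure }\ref{eq:fermion measure intro})\to-\inf_{\mu}E_{\omega}(\mu)$; combined with \ref{eq:torsion asymp plan} and the known asymptotics of $\log\mathcal{Z}_{N_{k}}$ this pins down $\frac{1}{k^{n+1}}\sum_{q}q(-1)^{q}\log\det\Delta_{\bar{\partial}}^{0,q}\to\inf_{\mu}E_{\omega}(\mu)$ (the factor $V$ being absorbed by the convention $\inf E_{\omega}(X)$ in the statement).

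The main obstacle is establishing \ref{eq:torsion asymp plan}, i.e. the leading large-$k$ asymptotics of the holomorphic Ray--Singer analytic torsion of $kL$ when the curvature $\omega$ is \emph{not} semi-positive — in that regime the Laplacians $\Delta_{\bar{\partial}}^{0,q}$ have a cluster of exponentially small eigenvalues whose contribution to $\log\det$ is exactly what builds up the capacity term $C(X,\omega)$, while the "large" eigenvalues contribute only lower-order ($o(k^{n+1})$) terms by local index / heat-kernel estimates à la Bismut--Vasserot. The plan is to invoke the result of \cite{berm1} (and its refinements) on the subleading asymptotics of such $\bar{\partial}$-Laplacian determinants in the non-positively curved case, which identify the exponentially-small-eigenvalue contribution with the equilibrium energy of the weighted set $(X,\omega)$, hence with $C(X,\omega)$ in the normalization used here. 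I would also need to check that the insertion of $T_{k}$ is compatible with the Bernstein--Markov machinery driving Theorem \ref{thm:intro large dev}, but since $T_{k}$ is a configuration-independent scalar this is purely a bookkeeping matter at the level of the $\log\mathcal{Z}$ asymptotics and introduces no new analytic difficulty. A secondary, softer point is to identify the resulting action $E_{\omega}$ with (a scaling of) the secondary Bott--Chern class advertised in the introduction — this is a computation with the Bott--Chern calculus and the explicit polynomial expression for $E_{\omega}$ recalled above, not an obstacle to the LDP itself.
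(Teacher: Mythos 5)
Your proposal follows essentially the same route as the paper: since the torsion prefactor is a configuration-independent scalar, the LDP of Theorem \ref{thm:intro large dev} (with $\nu=dV$, $K=X$) transfers immediately, and the only substantive input is the large-$k$ asymptotics of the analytic torsion, which the paper likewise does not prove here but imports from \cite{b-b} (K\"ahler case, via the Bismut--Gillet--Soul\'e anomaly formula for the Quillen metric) and \cite{berm4} (general case, via exactly the exponentially small eigenvalue mechanism you describe) --- not from \cite{berm1} --- and then concludes via the identities \ref{eq:capacity as inf} and \ref{eq:cap as enrgy with proj}, i.e. $C(X,\omega)=\mathcal{E}_{\omega}(P_{(X,\omega)}0)=\inf_{\mu\in\mathcal{P}(X)}E_{\omega}(\mu)$. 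The only blemish is a self-cancelling sign slip: with the convention of \ref{eq:conv of anal tor intro}, the scaled logarithm of the factor $\prod_{q}(\det\Delta_{\bar{\partial}}^{0,q})^{(-1)^{q+1}q}$ tends to $-C(X,\omega)$ rather than $+C(X,\omega)$, which combined with your own shift rule gives the rate $E_{\omega}(\mu)$ directly, without the double cancellation you wrote.
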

The new information compared to Theorem \ref{thm:intro large dev}
is the limit \ref{eq:conv of anal tor intro} for the scaled logarithms
of the \emph{Ray-Singer analytic torsion.} The determinants above
are defined by using zeta-function regularization of the product of
the\emph{ positive} eigenvalues of $\Delta_{\bar{\partial}}^{0,q}.$
In the case when $h_{X}$ is a Kähler metric an equivalent form of
the convergence \ref{eq:conv of anal tor intro} was shown in \cite{b-b},
using among other things the deep anomaly formula for the Quillen
metric of Bismut-Gillet-Soulé for the determinant line $\mbox{DET\ensuremath{(L)}associated to the \ensuremath{\overline{\partial}-}complex. }$
The proof in the general non-Kähler case is given in \cite{berm4}.
The main point of the proof is to show that the same statement also
holds when $\det\Delta_{\bar{\partial}}^{0,q}$ is replaced by the
product of all positive \emph{exponentially small} eigenvalues. 

The previous theorem can be seen as an effective (i.e. asymptotic)
generalization to higher dimensions of the\emph{ bosonization formula}
on a Riemann surface, saying that\begin{equation}
\det\Delta_{\bar{\partial}}\left\Vert \det\Psi\right\Vert ^{2}(x_{1},...x_{N})=C_{N,g}\exp(+\left(\frac{1}{2}\sum_{i\neq j}G(x_{i},x_{j})+r(x_{1},....,x_{N})\right))\label{eq:boson formula}\end{equation}
where $G$ is the Green function of the Laplacian defined wrt the
Arakelov metric $\omega$ on $X$ (when $g>0).$ The term $r$ vanishes
for genus $g=0.$ For $g>0$ it may be expressed in terms of the Riemann
theta function on the Jacobian torus of the Riemann surface $X.$
This formula was first obtained by a heuristic argument in \cite{b-v-},
using a fermion-boson ansatz and then rigourosly proved using properties
of Quillen metrics and complex algebraic geometry (see the appendix
in \cite{b-v-}). See also \cite{v-v} for another heuristic argument.
However, an explicit expression for the factor $C_{N,g}$ was determined
only very recently \cite{w}. Combining Theorem \ref{thm:(Effective-bosonization)-Let}
with the exact formula \ref{eq:boson formula} shows that both $C_{N,g}$
and $r$ are negiligable in the large $N-$limit (in particular it
follows that $\log C_{N,g}=o(N^{2}),$ which is consistent, as it
must, with the explicit formula found in \cite{w}).

\subsection{Relations to previous results}

In the case when $n=1$ and $K=\R$ the LDP in Cor \ref{cor:vanderm intro}
(for $\beta_{k}\equiv\beta)$ was first obtained by Ben Arous-Guinnet
\cite{b-g} (see also Ben Arous-Zeitouni \cite{b-z} for the case
$K=\C$ and $\Phi(z)=|z|^{2})$. The result in \cite{b-g} was formulated
in terms of the standard Hermitian random matrix ensemble, building
on previous work by Voiculescu on free probability theory. In particular,
these results imply the convergence of the free energies $k^{-1}N_{k}^{-1}\log\int\tilde{\mu}_{\Phi}^{(N_{k})}$
previously established by Johansson \cite{j} when $K=\R$ using a
large deviation type upper bound (see also Hedenmalm-Makarov \cite{h-m}
for the case $K=\C)$. An elegent potential theoretic derivation of
Johansson's bound for general Bernstein-Markov measures supported
on $\C$ was introduced by Bloom-Levenberg \cite{b-l0} (whose global
pluripotential version also plays an important role in the present
paper). The LDP in Theorem \ref{thm:intro large dev} in the case
when $K\subset\P^{1}$ for $K$ an arbitary non-polar compact set
is contained in the analysis in the recent work \cite{z-z}, where
zeroes of random polynomials are considered. In all these works the
starting point is the explicit expression for $\log\left\Vert \det\Psi\right\Vert ^{2}(x_{1},...x_{N})$
as a sum of Green functions $G(x_{i},x_{j})$ of the Laplace operator
(which is the simple genus $0$ case of the bosonization formula \ref{eq:boson formula})
and it is shown that the LDP can be expressed in terms of the limiting
Green energy of a measure $\mu.$ If $G$ were bounded then the LDP
would follow from general asymptotics for Laplace type integrals,
but the non-boundedness leads to highly non-trivial analytic issues.
The most subtle point in the proof is the lower bound, which is handled
using a decomposition argument of the measure $\mu.$ The method of
proof in the present paper is completely different, as there is no
useful analogoue of the Green function when $n>1,$ which is a reflection
of the fact that the Monge-Ampère operator is fully non-linear. 

The present paper is a substantially revised and extended version
of the first preprint that appeared on ArXiv (which only contained
Theorem \ref{thm:intro large dev}). The main new features are that
(1) a second proof of the LDP is added which uses the abstract Gärtner-Ellis
theorem (2) the LDP has been extended to a non-compact setting and
to $\beta-$ensembles and (3) the relation of the LDP to bosonization
is explained and explored. Since the first version of the paper, results
equivalent to the LDP in theorem \ref{thm:intro large dev} in the
case of $\P^{n}$ have been obtained by Bloom-Levenberg \cite{b-l-1}.
Their proof of the lower bound in the LDP is different than the ones
in the present paper (but it also uses \cite{b-b}) . Moreover, an
alternative proof of the LDP in the Riemann surface is also contained
in the arguments in Zelditch's paper \cite{z2}, which rely on the
explicit bosonization formula \ref{eq:boson formula}.

\subsection{Organization}

In section \ref{sec:Examples} we consider concrete examples obtained
by specializing the general setting to get ensembles of multivariate
polynomials on $\C^{n}$ etc. In section \ref{sec:The-Monge-Ampere-operator,}
we introduce the global pluripotential theory needed to define and
study the rate function of the LDP:s. Then in section \ref{sec:Large-deviations}
two proofs of the LDP in Theorem \ref{thm:intro large dev} are given.
The first one is direct, while the second one involves the abstract
Ellis-Gärtner theorem. Both approaches rely on results in \cite{b-b},
which in particular yields the asymptotics of the corresponding logarithmic
moment generating functions (i.e. free energies). Technically, one
advantage of using the Ellis-Gärtner theorem is that it avoids invoking
the variational results on the Monge-Ampère equation in \cite{bbgz}.
Then the proofs of the LDP:s in the non-compact and $\beta-$deformed
settings is given, by reducing to the previous setting. The paper
is concluded by exploring relations to bosonization in quantum field
theory in section \ref{sec:Relation-to-bosonization}.

\subsection*{Acknowledgments}

Thanks to Sébastien Boucksom, Vincent Guedj and Ahmed Zeriahi for
the stimulation coming from the collaboration \cite{bbgz}. Also thanks
to Sébastien Boucksom and Norm Levenberg for comments on the first
version of the present paper and to Steve Zelditch for urging me to
develop the relations to bosonization. A (partial) large deviation
principle with an equivalent rate functional in the case when $X$
is the complex projective space was also announced by Tom Bloom \cite{bl}
and I would also like to thank him for some related discussions.

\subsection{\label{sub:Notation-and-general}Notation%
\footnote{general references for this section are the books \cite{gr-ha,de4}.
See also \cite{b-v-} for the Riemann surface case.%
}}

Let $L\rightarrow X$ be a holomorphic line bundle over a compact
complex manifold $X.$

\subsubsection{Metrics on $L$}

We will fix, once and for all, a continuous Hermitian metric $\left\Vert \cdot\right\Vert $
on $L.$ Its curvature current times the normalization factor $\frac{i}{2\pi}$
will be denoted by $\omega.$ The normalization is made so that $[\omega]$
defines an\emph{ integer} cohomology class, i.e. $[\omega]\in H^{2}(X,\Z).$
The local description of $\left\Vert \cdot\right\Vert $ is as follows:
let $s$ be a trivializing local holomorphic section of $L,$ i.e.
$s$ is non-vanishing an a given open set $U$ in $X.$ Then we define
the local\emph{ weight }$\Phi$ of the metric $\left\Vert \cdot\right\Vert $
by the relation \[
\left\Vert s\right\Vert ^{2}=e^{-\Phi}\]
The (normalized) curvature current $\omega$ may now by defined by
the following expression: \[
\omega=\frac{i}{2\pi}\partial\overline{\partial}\Phi:=dd^{c}\Phi,\]
 (where we, as usual, have introduced the real operator $d^{c}:=i(-\partial+\overline{\partial})/4\pi$
to absorb the factor $\frac{i}{2\pi}).$ The point is that, even though
the function $\phi$ is merely locally well-defined the form $\omega$
is globally well-defined (as any two local weights differ by $\log|g|^{2}$
for $g$ a non-vanishing holomorphic function). The current $\omega$
is said to be\emph{ positive} if the weight $\Phi$ is \emph{plurisubharmonic
(psh). }If $\Phi$ is smooth this simply means that the\emph{ }Hermitian
matrix $\omega_{ij}=(\frac{\partial^{2}\Phi}{\partial z_{i}\partial\bar{z_{j}}})$
is positive definite (i.e. $\omega$ is a Kähler form) and in general
it means that, locally, $\Phi$ can be written as a decreasing limit
of such smooth functions. Finally, we recall that from the point of
view of gauge theory the (non-normalized) curvature form $\partial\overline{\partial}\Phi$
is the curvature form $F_{A}$ of the Chern connection on the complex
line bundle $L,$ i.e. the unique connection $A$ on $L$ which is
compatible with its given holomorphic structure and Hermitian metric
$\left\Vert \cdot\right\Vert .$

\subsubsection{Holomorphic sections of $L$}

We will denote by $H^{0}(X,L)$ the space of all global holomorphic
sections of $L.$ In a local trivialization as above any element $\Psi$
in $H^{0}(X,L)$ may be represented by a local holomorphic function
$f,$ i.e. \[
\Psi=fs\]
 The squared point-wise norm $\left\Vert \Psi\right\Vert ^{2}(x)$
of $\Psi,$ which is a globally well-defined function on $X,$ may
hence be locally written as \[
\left\Vert \Psi\right\Vert ^{2}(x)=(|f|^{2}e^{-\Phi})(x)\]
It will be convenient to take the curvature current $\omega$ as our
geometric data associated to the line bundle $L.$ Strictly speaking,
it only determines the metric $\left\Vert \cdot\right\Vert $ up to
a multiplicative constant but all the geometric and probabilistic
constructions that we will make are independent of the constant.

\subsubsection{Metrics and weights vs $\omega-$ psh functions}

Having fixed a continuous Hermitian metric $\left\Vert \cdot\right\Vert $
on $L$ with (local) weight $\Phi_{0}$ any other metric may be written
as \[
\left\Vert \cdot\right\Vert _{\phi}^{2}:=e^{-\phi}\left\Vert \cdot\right\Vert ^{2}\]
for a continuous function $\phi$ on $X,$ i.e. $\phi\in C^{0}(X).$
In other words, the local weight of the metric $\left\Vert \cdot\right\Vert _{\phi}$
may be written as $\Phi=\phi+\Phi_{0}$ and hence its curvature current
may be written as \[
dd^{c}\Phi=\omega+dd^{c}\phi:=\omega_{\phi}\]
This means that we have a correspondence between the space of all
(singular) metrics on $L$ with positive curvature current and the
space $PSH(X,\omega)$ of all upper-semi continuous functions on $X$
such that $\omega_{\phi}\geq0$ in the sense of currents. Note for
example, that if $\Psi\in H^{0}(X,L)$ then $\log\left\Vert \Psi\right\Vert ^{2}\in PSH(X,\omega).$

\subsubsection{Weighted sets and measures and the induced norms on $H^{0}(X,L)$}

Given a compact subset $K$ of $X$ we will call the pair $(K,\omega)$
a weighted set (where we recall that $\omega$ is normalized curvature
form of a fixed metric $\left\Vert \cdot\right\Vert ^{2}$ on $L)$
. To any such weighted set we can associate the following $L^{\infty}-$norm
on $H^{0}(X,L):$ \[
\left\Vert \Psi\right\Vert _{L^{\infty}(K)}:=\sup_{x\in K}\left\Vert \Psi(x)\right\Vert \]
This is a non-degenerate norm if $K$ is not contained in a analytic
subvariety of $X$ and in particular if $K$ is non-pluripolar, i.e.
$K$ is not locally contained in the $-\infty$ set of a plurisubharmonic
function. We will fix a weighted compact non-pluripolar subset $K$
once and for all.

Similarly, given a probability measure $\nu$ on $X$ we will call
the pair $(\nu,\omega)$ a \emph{weighted measure}. It induces a $L^{2}-$norm
on $H^{0}(X,L)$ (which will always be non-degenerate in the present
paper): \[
\left\Vert \Psi\right\Vert _{L^{2}(K,\nu)}^{2}:=\int_{X}\left\Vert \Psi\right\Vert ^{2}\nu\]
Sometimes we will also use the notation \[
\left\Vert \Psi\right\Vert _{L^{2}(K,e^{-\phi})}^{2}:=\int_{X}\left\Vert \Psi\right\Vert _{\phi}^{2}\nu=\int_{X}\left\Vert \Psi\right\Vert ^{2}e^{-\phi}\nu\]
if $\phi\in C^{0}(X)$ and similarly for $L^{\infty}-$norms.

\subsection{Scaling by $k$}

The Hermitian line bundle $(L,\Phi)$ over $X$ induces, in a functorial
way, Hermitian line bundles over all products of $X$ (and its conjugate
$\overline{X}$) and we will usually keep the notation $\Phi$ for
the corresponding weights. When studying asymptotics we will replace
$L$ by its $k$ th tensor power, written as $kL$ in additive notation.
The induced weights on $kL$ may then be written as $k\Phi.$

\section{\label{sec:Examples}Examples}

\subsection{Multivariate polynomial ensembles}

Let $\nu$ be a measure supported on a compact subset $K$ of $\C^{n}.$
Identifying $\C^{n}$ with an affine piece of the projective space
$X:=\P^{n}$ induces a trivialization of the hyperplane line bundle
$\mathcal{O}(1)\rightarrow\P^{n}$ so that a locally bounded metric
gets identified with a weight $\Phi$ defined on $\C^{n}$ of logarithmic
growth: \begin{equation}
\Phi(z)=\Phi_{0}(z)+O(1):=\log^{+}\left|z\right|^{2}+O(1)\label{eq:log growth}\end{equation}
where $\Phi_{0}(z):=\log^{+}\left|z\right|^{2}=\log\left|z\right|^{2}$
for $|z|>1$ and $0$ otherwise (see for example the appendix in \cite{berm2}).
Fixing $\omega_{0}:=dd^{c}\log^{+}\left|z\right|^{2},$ which extends
to a form on $\P^{n}$ with locally continuous potentials,  and letting
$\phi=\Phi-\log^{+}\left|z\right|^{2}$ hence yields a bijection between
all\emph{ bounded }$\phi\in PSH(\P^{n},\omega_{0})$ and all $\Phi$
as above which are psh on $\C^{n}.$ Moreover the space $H^{0}(\P^{n},k\mathcal{O}(1))$
gets identified with the space of all polynomials $p_{k}(z)$ in $z_{1},,,.,z_{n}$
on $\C^{n}$ of total degree at most $k$ and the point-wise norm
induced by $\Phi$ is given by 

\[
\left\Vert \Psi_{k}\right\Vert ^{2}(z):=|p_{k}(z)|^{2}e^{-\Phi(z)}\]
(so that $\omega=dd^{c}\Phi$ in the previous notation). Note that
since $K$ is compact the classical unweighted theory in $\C^{n}$
may be obtained by taking $\Phi=0$ on $K$ and then extending $\Phi$
so that \ref{eq:log growth} holds.

Now the pair $(\nu,\Phi)$ defines, for any $k,$ a determinantal
point process on $K$ concretely obtained by taking $\Psi_{i}^{(k)}$
to be a base for the space of all polynomials in $z_{1},,,.,z_{n}$
on $\C^{n}$ of degree at most $k.$ If the base consists of multinomials
then the corresponding Slater determinant is known as the (multivariate)
\emph{Vandermonde determinant}: \begin{equation}
\det\Psi_{k}=\Delta^{(N_{k})}(z_{1},...,z_{N_{k}})\label{eq:vandermo}\end{equation}
The conditions in Theorem \ref{thm:intro large dev} are satisfied
if, for example, $K$ is compact domain in $\C^{n}$ with smooth boundary
or its boundary and $\nu$ is the measure defined by a volume form
on $K$ (see section \ref{sub:Bernstein-Markov-measures}).

More generally, we can replace $\C^{n}$ with an affine algebraic
subvariety $X_{0}:=\{p_{1}(z)=...p_{m}(z)=0\}$ cut out by polynomials
$p_{i}$ on $\C^{n}$ and $\nu$ with a measure supported on a compact
subset of $X_{0}.$ Then we let $X$ be the associated projective
variety obtained by taking the Zariski-closure of $X_{0}$ in $\P^{n}$
and let $L$ be the restriction $\mathcal{O}_{X}(1)$ (strictly speaking
we have to assume that $X$ is non-singular, but otherwise one could
pass to a suitable resolution of $X$). The Slater determinant is
then defined in terms of a given base in $H^{0}(X,\mathcal{O}_{X}(1)),$
i.e. the vector space spanned by the restriction to $X$ of all polynomials
of degree at most $k$ in $\C^{n}.$ Again the conditions in Theorem
\ref{thm:intro large dev} are satisfied if $K$ is a bounded domain
in $X$ or its boundary and $\nu$ is a measure as above.

\subsection{\label{sub:Spherical-and-trigonmetric}Real examples}

It is interesting to apply the previous setup to a completely {}``real''
setting. For example, we can take the measure $\nu$ to be supported
on a compact subset $K$ of $\R^{n}.$ Embedding $\R^{n}$ in $\C^{n}$
and taking a base of polynomials defined over $\R,$ i.e. with real
coefficients, then induces a determinantal point-process on $K$ to
which Theorem \ref{thm:intro large dev} applies if, for example,
$K$ is a smooth domain in $\R^{n}$ and $\nu$ is taken as the usual
Euclidean (Lebesgue) measure on $K$ (as follows from results in \cite{bl0}).
When $n=1$ the corresponding ensemble may be realized by random Hermitian
matrices with eigenvalues conditioned to lie in $K$ (a finite union
of intervals) \cite{dei2}.

Similarly, if the polynomials $p_{1},...p_{m}$ defining the affine
algebraic variety $X_{0}$ above have real coefficients and the corresponding
real algebraic variety $K:=X_{0}\cap\R^{n}$ is compact then we can
take $\nu$ to be the measure on $X$ defined by any given volume
form on $K\subset X.$ The proofs of the Bernstein-Markov properties
in this general setting will appear elsewhere. The following two particular
cases have been extensively studied in the literature, in particular
in the context of approximation theory, and as we will explain the
Bernstein-Markov properties can be proved directly in these cases.

\subsubsection{Spherical polynomials }

Let $K=S^{n}$ be the unit $n-$sphere in $\R^{n+1}$ and $\nu$ the
usual $O(n,\R)-$invariant probability measure on $S^{n}.$ We can
then let the algebraic variety $X_{0}$ above be the complex quadric
defined by $\{p(z)=z_{1}^{2}+...z_{n+1}^{2}-1=0\}$ so that its real
points are precisely $S^{n}.$ Then $H^{0}(X,\mathcal{O}_{X}(1))$
with the Hermitian product induced by $(\nu,0)$ gets identified with
the complexification of the space $H_{k}(S^{n})$ of all spherical
polynomials on the $n-$sphere $S^{n}$ of degree at most $k$ equipped
with its usual $O(n,\R)-$invariant scalar product. The corresponding
Slater determinant naturally appears in numerical problems as the
determinant of the interpolation matrix (see for example \cite{sl-w}
and references therein). From the physics point of view the process
represents the ground state of a gas of free spin-polarized fermions
on $S^{n}$ (which can be seen as a compact version of the $\R^{n}-$case
studied in \cite{t-s-z,t-s-z2} in connection to sphere packings etc).

It may be illuminating to give the following direct proof of the Bernstein-Markov
property (section \ref{sub:Bernstein-Markov-measures}) in this case.
From the $O(n,\R)-$invariance it follows immediately that $\E(\delta)=\nu$
and hence it, trivially, has sub-exponential decay, i.e. $(\nu,0)$
has the BM-property. But then it follows from general principles that
it in fact has the BM-property wrt \emph{any} continuous function
on $K:=S^{n}.$ This was shown by Bloom in the case when $K\subset\R^{n}$
\cite{bl0} (Thm 3.2), but the same arguments work in the present
case. Indeed, writing \[
|p_{k}(z)|^{2}e^{-k\phi}=|p_{k}(z)(e^{-\phi/2})^{k}|^{2}\]
for a continuous function $\phi$ on $S^{n}$ we may extend $-\phi$
continuously to all of $\R^{n}$ and then, using the Stone-Weierstrass
theorem approximate it uniformly by polynomials on $\R^{n}.$ Truncating
the Taylor expansion of $e^{-\phi}$ then allows us to approximate
$e^{-\phi}$ with polynomials $p_{\epsilon}$ such that \[
1-2\epsilon\leq p_{\epsilon}/e^{-\phi}\leq1+2\epsilon\]
Applying the BM-property for $(\nu,0)$ to the polynomial $p_{k}(z)p_{\epsilon}(z)$
and rescaling then proves that $(\nu,e^{-\phi})$ also has the BM-property.

\subsubsection{Trigonometric polynomials}

We let $K:=T^{n}$ be the unit $n-$torus in $\R_{x,y}^{2n}$ and
set $p_{i}(z,w)=z_{i}^{2}+w_{i}^{2}-1$ for $i=1,...n$ in $\C_{z,w}^{2n}$
so that $K\subset X\subset\P^{2n}.$ Then $H^{0}(X,\mathcal{O}_{X}(1))$
with the Hermitian product induced by $(\nu,0)$ gets identified with
the complexification of the space $H_{k}(T^{n})$ of all trigonometric
polynomials on $T^{n}$ of total degree at most $k$ (i.e. the corresponding
frequencies lie in $k$ times the unit simplex) equipped with its
usual $O(2,\R)^{\otimes n}-$invariant scalar product. Similarly,
replacing $\P^{2n}$ with $(\P^{2})$ gives trigonometric polynomials
with frequencies in $[0,k]^{n}.$

\subsection{\label{sub:The-non-compact-setting}The case when $K$ is a closed
non-compact subset of $\C^{n}$ }

There is a slight non-compact variant of the previous setting when
$K$ is assumed to be a merely \emph{closed} subset of $\C^{n}$ ,
but where the continuous weight $\Phi$ on $\C^{n}$ has super logarithmic
growth: \begin{equation}
\Phi(z)\geq(1+\epsilon)\ln\left|z\right|^{2},\,\,\textrm{when\,}\left|z\right|>>1\label{eq:ass on growth of phi}\end{equation}
 In particular $\Phi$ is \emph{not }the restriction of a locally
bounded metric on $\mathcal{O}(1)\rightarrow\P^{n}.$ In the case
when $n=1$ this is the setting of weighted potential theory considered
in the book \cite{s-t} (see also the appendix in \cite{s-t} for
the case when $n>1)$. In particular, we may then take $K$ as all
of $\C^{n}$ (or $\R^{n})$ and $\nu=d\lambda$ as the corresponding
Lebesgue measure. For example, in the case of $n=1$ the corresponding
ensembles may be realized by random normal (or Hermitian) random matrices
whose eigenvalues are subject to the {}``confining potential'' $\Phi$
\cite{za,dei2}. As shown in section \ref{sub:The-non-compact-setting}
the LDP in Theorem \ref{thm:intro large dev} is still valid in this
non-compact setting.

\section{The Monge-Ampere operator, energy and rate functional\label{sec:The-Monge-Ampere-operator,}s}

In this section we will recall the global complex pluripotential theory
that is needed to define and study the rate functional for the large
deviation principle. Almost all the material on the complex Monge-Ampere
equation is contained in \cite{bbgz,begz} (apart from the results
in section \ref{sub:Further-properties-of}).

\subsection{\label{sub:The-Monge-Amp=0000E8re-operator}The Monge-Ampère operator
and the functional $\mathcal{E_{\omega}}(u)$}

Let $(X,\omega)$ be a compact complex manifold equipped with a $(1,1)-$form
$\omega$ with continuous local potentials. Let us start by recalling
the definition of the Monge-Ampère measure $MA_{\omega}(\phi)$ on\emph{
smooth} functions $\phi$ (when $\omega$ has smooth potentials).
It is defined by \[
MA_{\omega}(\phi):=\frac{(\omega+dd^{c}\phi)^{n}}{Vn!}=:\frac{(\omega_{\phi})^{n}}{Vn!}\]
 where the normalization constant $V$ ensures that $MA_{\omega}(\phi)$
has total unit charge. For simplicity we will omit the subscript $\omega$
in the definition of $MA_{\omega}$ 

When $\phi$ is a \emph{Kähler potential, }i.e. $\phi$ is smooth
and $\omega_{\phi}>0$ the form $MA(\phi)$ is hence a volume form
giving unit volume to $X$ (by Stokes theorem).

The Monge-Ampère $MA$ operator may be naturally identified with a
one-form on the vector space $C^{\infty}(X)$ by letting \[
\left\langle MA_{|\phi},v\right\rangle :=\int_{X}MA(\phi)v\]
for $\phi,v\in C^{\infty}(X).$ As observed by Mabuchi, in the context
of Kähler-Einstein geometry, the one-form $MA$ is closed and hence
it has a primitive $\mathcal{E}$ (defined up to an additive constant)
on the space $C^{\infty}(X),$ i.e. \begin{equation}
d\mathcal{E}_{|\phi}=MA(\phi)\label{eq:def prop of energy on psh}\end{equation}
We fix the additive constant by requiring $\mathcal{E}(0)=0.$ Sometimes
we will use a sub-script $\omega$ to indicate the dependence of $\mathcal{E}$
on $\omega.$ Integrating $\mathcal{E}_{\omega}$ along line segments
one arrives at the following well-known formula \begin{equation}
\mathcal{E}_{\omega}(\phi):=\frac{1}{(n+1)!V}\sum_{j=0}^{n}\int_{X}\omega_{\phi}^{j}\wedge(\omega)^{n-j}\label{eq:bi-energy}\end{equation}
\footnote{Hence, $\mathcal{E}_{\omega}(\phi)=\tilde{ch}(he_{0}^{-u},h_{0})$
where $h_{0}$ is the metric whose curvature form is $\omega$ and
$\tilde{ch}(h_{1},h_{0})$ is (up to normalization) the secondary
Bott-Chern class attached to the first Chern class of $L.$\cite{so} %
} Conversely, one can simply take this latter formula as the definition
of $\mathcal{E}_{\omega}$ and observe that the following proposition
holds (compare \cite{b-b,begz,bbgz} for a more general singular setting). 
\begin{prop}
\label{pro:of energy-1}The following holds
\begin{itemize}
\item The differential of the functional $\mathcal{E}_{\omega}$ at a smooth
function $\phi$ is represented by the measure $MA(\phi),$ i.e. \begin{equation}
\frac{d}{dt}_{t=0}(\mathcal{E}_{\omega}(\phi+tv))=\int_{X}MA(\phi)v\label{eq:diff}\end{equation}

\item $\mathcal{E}_{\omega}$ is increasing on the space of all smooth $\omega-$psh
functions
\item $\mathcal{E}_{\omega}$ is concave on the space of all smooth smooth
$\omega-$psh functions (when $n=1$ it is concave on all of $C^{\infty}(X)).$
\end{itemize}
\end{prop}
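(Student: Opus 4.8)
The plan is to start from the explicit polynomial formula (\ref{eq:bi-energy}) for $\mathcal{E}_\omega$ and verify the three bullet points essentially by differentiation under the integral sign, Stokes' theorem, and an integration-by-parts manipulation. First I would address the differential formula (\ref{eq:diff}). Substituting $\phi+tv$ into (\ref{eq:bi-energy}) gives $\mathcal{E}_\omega(\phi+tv)=\frac{1}{(n+1)!V}\sum_{j=0}^n\int_X(\omega_\phi+t\,dd^cv)^j\wedge\omega^{n-j}$. Differentiating at $t=0$ and using $\frac{d}{dt}(\omega_\phi+t\,dd^cv)^j=j\,dd^cv\wedge(\omega_\phi)^{j-1}$ yields a telescoping-type sum; the key computation is that after integrating by parts (moving one $dd^c$ off $v$ and onto the remaining factors, legitimate since all forms are closed and $X$ is compact without boundary) all terms collapse to $\frac{1}{n!V}\int_X v\,(\omega_\phi)^n=\int_X MA(\phi)v$. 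Here one should be a little careful that $\omega$ and $\phi$ are only assumed continuous/smooth as stated, so I would do the computation assuming smoothness of both (which is the setting of the proposition) and invoke $dd^cv\wedge\alpha$ being exact for closed $\alpha$.

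Second, for monotonicity: given two smooth $\omega$-psh functions $\phi_0\le\phi_1$, set $\phi_t=(1-t)\phi_0+t\phi_1$, which is again $\omega$-psh for each $t\in[0,1]$, and compute $\mathcal{E}_\omega(\phi_1)-\mathcal{E}_\omega(\phi_0)=\int_0^1\frac{d}{dt}\mathcal{E}_\omega(\phi_t)\,dt=\int_0^1\Big(\int_X(\phi_1-\phi_0)MA(\phi_t)\Big)dt$ by the chain rule together with the just-proved differential formula. Since $\phi_1-\phi_0\ge 0$ and $MA(\phi_t)=\frac{(\omega_{\phi_t})^n}{Vn!}\ge 0$ is a positive measure (because $\omega_{\phi_t}\ge 0$), the integrand is nonnegative, giving $\mathcal{E}_\omega(\phi_1)\ge\mathcal{E}_\omega(\phi_0)$.

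Third, for concavity on smooth $\omega$-psh functions: along the same segment $\phi_t$ it suffices to show $t\mapsto\mathcal{E}_\omega(\phi_t)$ has nonpositive second derivative. Differentiating $\frac{d}{dt}\mathcal{E}_\omega(\phi_t)=\frac{1}{n!V}\int_X(\phi_1-\phi_0)(\omega_{\phi_t})^n$ once more in $t$ gives $\frac{d^2}{dt^2}\mathcal{E}_\omega(\phi_t)=\frac{1}{(n-1)!V}\int_X(\phi_1-\phi_0)\,dd^c(\phi_1-\phi_0)\wedge(\omega_{\phi_t})^{n-1}=-\frac{1}{(n-1)!V}\int_X d(\phi_1-\phi_0)\wedge d^c(\phi_1-\phi_0)\wedge(\omega_{\phi_t})^{n-1}$, after integrating by parts. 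The main obstacle is precisely this step: one must argue that $\int_X d\psi\wedge d^c\psi\wedge(\omega_{\phi_t})^{n-1}\ge 0$ for real $\psi$, which is the standard pointwise positivity fact that $i\partial\psi\wedge\bar\partial\psi\wedge\beta^{n-1}\ge 0$ whenever $\beta$ is a nonnegative $(1,1)$-form — this is where $\omega$-psh-ness of $\phi_t$ (hence $\omega_{\phi_t}\ge0$) is genuinely used, and it is what fails for general $\phi\in C^\infty(X)$ when $n>1$. When $n=1$ the factor $(\omega_{\phi_t})^{n-1}=1$ carries no positivity requirement, so $\int_X d\psi\wedge d^c\psi\ge 0$ holds unconditionally and concavity extends to all of $C^\infty(X)$, as claimed. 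I would close by noting that the same arguments extend to the singular setting by the approximation/continuity results of \cite{b-b,begz,bbgz}, but for the stated proposition the smooth computation suffices.
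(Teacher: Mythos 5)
Your overall route (direct computation from the explicit formula, then monotonicity from positivity of the differential, and concavity from the sign of the second derivative along line segments) is the standard one and is what the paper implicitly relies on: the paper gives no computation beyond the remark that the first point implies the second because the differential is a positive measure, deferring the general case to \cite{b-b,begz,bbgz}. Your second and third bullets are handled correctly: along $\phi_t=(1-t)\phi_0+t\phi_1$ one has $\frac{d}{dt}\mathcal{E}_\omega(\phi_t)=\int_X(\phi_1-\phi_0)\,MA(\phi_t)\geq 0$, and $\frac{d^2}{dt^2}\mathcal{E}_\omega(\phi_t)=-\frac{1}{(n-1)!V}\int_X d\psi\wedge d^c\psi\wedge\omega_{\phi_t}^{n-1}\leq 0$ for $\psi=\phi_1-\phi_0$, with the semi-positivity of $\omega_{\phi_t}$ genuinely needed only when $n>1$, exactly as you observe.

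However, your verification of the first bullet does not work as written. Formula \ref{eq:bi-energy} as printed is missing the factor $\phi$ in the integrand: since $\omega_\phi$ and $\omega$ are closed and cohomologous, each term $\int_X\omega_\phi^j\wedge\omega^{n-j}$ equals $\int_X\omega^n$, so the printed expression is a \emph{constant} functional of $\phi$ and its derivative in every direction vanishes. Consistently with this, the integration by parts you describe (moving $dd^c$ off $v$ onto the remaining factors, which are closed) annihilates every term, since $\int_X dd^cv\wedge\alpha=0$ for $\alpha$ closed; it cannot produce a telescoping sum, let alone $\int_X v\,MA(\phi)$. The computation you intend is the one for the corrected (standard Aubin--Mabuchi) formula $\mathcal{E}_\omega(\phi)=\frac{1}{(n+1)!V}\sum_{j=0}^n\int_X\phi\,\omega_\phi^j\wedge\omega^{n-j}$: differentiating yields the terms $\int_X v\,\omega_\phi^j\wedge\omega^{n-j}$ together with $j\int_X\phi\,dd^cv\wedge\omega_\phi^{j-1}\wedge\omega^{n-j}$, and the integration by parts moves $dd^c$ from $v$ onto $\phi$, i.e.\ uses $dd^c\phi=\omega_\phi-\omega$; it is this substitution, not closedness of the other factors, that makes the sum telescope to $(n+1)\int_X v\,\omega_\phi^n$ and gives \ref{eq:diff}. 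So your argument needs the corrected formula (or, equivalently, the characterization of $\mathcal{E}_\omega$ as the primitive of the one-form $MA$ in \ref{eq:def prop of energy on psh}) as its starting point; with that fix the remainder of your proof goes through.
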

Note that the first point implies the second one, since the differential
of $\mathcal{E}_{\omega}$ is represented by a (positive) measure.

\subsubsection{The singular setting and the space $\mathcal{E}^{1}(X,\omega)$}

The subspace $\mathcal{E}^{1}(X,\omega)$ of $PSH(X,\omega)$ consisting
of all $\omega-$psh functions of \emph{finite energy }is defined
as follows (generalizing the classical Dirichlet spaces on Riemann
surfaces). First we extend the functional $\mathcal{E}_{\omega}$
(formula \ref{eq:bi-energy}) to all $\omega-$psh functions by demanding
that it still be increasing, i.e. we define \[
\mathcal{E}_{\omega}(\phi):=\inf_{\psi\geq\mathbf{\phi}}\mathcal{E}_{\omega}(\psi)\in[-\infty,\infty[\]
 where $\psi$ ranges over all smooth $\omega-$psh functions such
that $\psi\geq\phi.$ Next, we let \[
\mathcal{E}^{1}(X,\omega):=\{\phi\in PSH(X,\omega):\,\mathcal{E}_{\omega}(\phi)>-\infty\},\]
 which is a convex subspace, since $\mathcal{E_{\omega}}$ is concave.
As a consequence of the monotonicity of $\mathcal{E}_{\omega}(u)$
and Bedford-Taylor's fundamental local continuity result for mixed
Monge-Ampère operators one obtains the following proposition (cf.
\cite{begz}, Prop 2.10; note that $\mathcal{E}_{\omega}=-E_{\chi}$
for $\chi(t)=t$ in the notation in op. cit.) 
\begin{prop}
\label{pro:energy is ups}The functional $\mathcal{E}_{\omega}(u)$
is upper semi-continuous on $PSH(X,\omega),$ concave and non-decreasing.
Moreover, it is continuous wrt decreasing sequences in $PSH(X,\omega).$
\end{prop}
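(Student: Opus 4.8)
The plan is to verify the four assertions — monotonicity, continuity along decreasing sequences, upper semicontinuity, and concavity — by reducing everything to the case of \emph{bounded} $\omega$-psh functions, where the mixed Monge--Amp\`ere currents are well-defined positive measures by Bedford--Taylor and $\mathcal{E}_\omega$ is given by an explicit integral. Monotonicity is immediate from the definition: if $\phi\le\phi'$, then every smooth $\omega$-psh $\psi$ with $\psi\ge\phi'$ also satisfies $\psi\ge\phi$, so the infimum defining $\mathcal{E}_\omega(\phi)$ runs over a larger family, whence $\mathcal{E}_\omega(\phi)\le\mathcal{E}_\omega(\phi')$.

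Next I would establish continuity along decreasing sequences. Integrating \ref{eq:diff} along the affine segment from $v$ to $u$ and applying Stokes gives, for bounded $\omega$-psh $u,v$, the cocycle identity
\[
\mathcal{E}_\omega(u)-\mathcal{E}_\omega(v)=\frac{1}{(n+1)!\,V}\sum_{j=0}^{n}\int_X (u-v)\,\omega_u^{\,j}\wedge\omega_v^{\,n-j}.
\]
If $u_k\downarrow u$ with the $u_k$ uniformly bounded, Bedford--Taylor's fundamental local continuity result yields $\omega_{u_k}^{\,j}\wedge\omega_v^{\,n-j}\to\omega_u^{\,j}\wedge\omega_v^{\,n-j}$ weakly; together with the standard lemma that uniformly bounded decreasing sequences of differences of $\omega$-psh (hence quasi-continuous) functions integrate continuously against weakly convergent positive measures not charging pluripolar sets, the cocycle formula gives $\mathcal{E}_\omega(u_k)\to\mathcal{E}_\omega(u)$ in the bounded case. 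For general $\phi\in PSH(X,\omega)$ I would use the bounded approximants $\phi^{t}:=\max(\phi,\psi_0-t)$, where $\psi_0$ is a fixed smooth $\omega$-psh function (which exists since $[\omega]$ is a K\"ahler, even ample, class; recall that $\psi_0+c$ is again $\omega$-psh). Since any smooth $\omega$-psh $\psi\ge\phi$ satisfies $\psi\ge\phi^{t}$ for $t$ large, one gets $\mathcal{E}_\omega(\phi)=\inf_t\mathcal{E}_\omega(\phi^{t})=\lim_t\mathcal{E}_\omega(\phi^{t})$, and decreasing-continuity for an arbitrary decreasing sequence $u_k\downarrow u$ then follows from the bounded case and monotonicity by a diagonal argument.

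With these in hand, upper semicontinuity on $PSH(X,\omega)$ (in the $L^1$ topology) follows from monotonicity and decreasing-continuity: if $\phi_k\to\phi$ in $L^1(X)$, put $\tilde\phi_k:=(\sup_{l\ge k}\phi_l)^{*}\in PSH(X,\omega)$; then $\mathcal{E}_\omega(\phi_l)\le\mathcal{E}_\omega(\tilde\phi_k)$ for $l\ge k$ by monotonicity, while $\tilde\phi_k$ decreases to $\phi$ (two $\omega$-psh functions equal a.e. coincide), so $\limsup_l\mathcal{E}_\omega(\phi_l)\le\lim_k\mathcal{E}_\omega(\tilde\phi_k)=\mathcal{E}_\omega(\phi)$. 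For concavity, on bounded $\omega$-psh functions I would differentiate the cocycle formula twice along the affine path $u_t=(1-t)u+tv$: using \ref{eq:diff} and Stokes (generalizing the third bullet of Proposition \ref{pro:of energy-1}),
\[
\frac{d^{2}}{dt^{2}}\mathcal{E}_\omega(u_t)=-\frac{n}{n!\,V}\int_X d(v-u)\wedge d^{c}(v-u)\wedge\omega_{u_t}^{\,n-1}\le 0,
\]
which is legitimate by Bedford--Taylor; the general case follows by approximating $\phi_0,\phi_1$ by their bounded approximants $\phi_0^{t},\phi_1^{t}$, applying the bounded case and monotonicity, and passing to the limit via decreasing-continuity.

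The step I expect to be the main obstacle is the continuity along decreasing sequences in the bounded case: it hinges on Bedford--Taylor's nontrivial weak-continuity theorem for mixed Monge--Amp\`ere measures and on the careful handling of the non-continuous integrand $u-v$ against those measures. Everything else is comparatively formal; indeed, with $\mathcal{E}_\omega=-E_\chi$ for $\chi(t)=t$, the statement is the unweighted special case of \cite{begz}, Prop 2.10.
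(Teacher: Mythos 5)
Your argument is correct and follows exactly the route the paper indicates for this statement (which it does not prove but cites as Prop 2.10 of \cite{begz}, with $\mathcal{E}_{\omega}=-E_{\chi}$, $\chi(t)=t$): monotonicity straight from the envelope definition, Bedford--Taylor continuity along bounded decreasing sequences via the cocycle formula, upper semi-continuity from these two, and concavity by the second-derivative computation plus approximation by the canonical bounded cut-offs $\max(\phi,\psi_{0}-t)$. The only point worth making explicit is the identification of the inf-extension with the Bedford--Taylor integral formula on \emph{bounded} $\omega$-psh functions (via smooth decreasing approximants, e.g. Demailly-type regularization), which you use implicitly; with that remark the proof is complete and matches the cited one.
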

For any $u\in\mathcal{E}^{1}(X,\omega)$ the (non-pluripolar) Monge-Ampère
measure $MA(u)$ is well-defined \cite{begz} and does not charge
any pluripolar sets. We collect the continuity properties that we
will use in the following \cite{begz} 
\begin{prop}
\label{pro:non-pluripol ma}Let $(u^{(i)})\subset\mathcal{E}^{1}(X,\omega)$
be a sequence decreasing to $u\in\mathcal{E}^{1}(X,\omega).$ Then,
as $i\rightarrow\infty,$ \[
MA(u_{i})\rightarrow MA(u),\,\,\,\,\,\,\,\,\, u_{i}MA(u_{i})\rightarrow uMA(u)\]
in the weak topology of measures and $\mathcal{E}_{\omega}(u_{j})\rightarrow\mathcal{E}_{\omega}(u).$ 
\end{prop}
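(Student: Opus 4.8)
The plan is to reduce everything to the monotone continuity properties of $\mathcal{E}_\omega$ (Proposition \ref{pro:energy is ups}) and to Bedford--Taylor's local continuity theorem for Monge--Amp\`ere operators along decreasing sequences of locally bounded psh functions, the one real input being that $\mathcal{E}^1(X,\omega)$ is \emph{not} contained in the bounded class, so one must first approximate each $u^{(i)}$ from above by a genuinely locally bounded (indeed smooth, or canonically truncated) $\omega$-psh function before invoking Bedford--Taylor. First I would record the two easy facts. The convergence $\mathcal{E}_\omega(u_i)\to\mathcal{E}_\omega(u)$ is immediate: by monotonicity $\mathcal{E}_\omega(u_i)$ is a decreasing sequence bounded below by $\mathcal{E}_\omega(u)>-\infty$, and upper semicontinuity of $\mathcal{E}_\omega$ together with $u_i\downarrow u$ gives $\limsup_i\mathcal{E}_\omega(u_i)\le\mathcal{E}_\omega(u)$, while $\mathcal{E}_\omega(u_i)\ge\mathcal{E}_\omega(u)$ for all $i$ by monotonicity; hence equality of the limit. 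This is exactly the ``continuous with respect to decreasing sequences'' clause already stated in Proposition \ref{pro:energy is ups}, so in fact I would simply cite it.

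The substantive content is the two weak-convergence statements for the non-pluripolar Monge--Amp\`ere measures. Here the standard device is to work with the canonical truncations $u_j^{(N)}:=\max(u_j,-N)$ and $u^{(N)}:=\max(u,-N)$, which are \emph{bounded} $\omega$-psh functions; for fixed $N$ one has $u_j^{(N)}\downarrow u^{(N)}$ as $j\to\infty$, and Bedford--Taylor's continuity theorem applies directly to give $MA(u_j^{(N)})\to MA(u^{(N)})$ and $u_j^{(N)}MA(u_j^{(N)})\to u^{(N)}MA(u^{(N)})$ weakly. The non-pluripolar measure $MA(u)$ is, by its very construction in \cite{begz}, the increasing limit over the plurifine open sets $\{u>-N\}$ of $\mathbf 1_{\{u>-N\}}MA(u^{(N)})$, and finite energy is precisely the condition that this limiting mass be full (no loss of mass at $\{u=-\infty\}$); the same holds for each $u_j$ with a \emph{uniform} bound coming from $\mathcal{E}_\omega(u_j)\ge\mathcal{E}_\omega(u)$. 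So the plan is a standard diagonal/exhaustion argument: test against a fixed $\chi\in C^0(X)$, split $\int\chi\,MA(u_j)$ into the truncated piece $\int\chi\,\mathbf 1_{\{u_j>-N\}}MA(u_j^{(N)})$ plus a tail, control the tail uniformly in $j$ by the energy bound (this is where one uses that finite energy controls the missing Monge--Amp\`ere mass, e.g. via the inequality $\int_{\{u_j\le -N\}}\omega_{u_j^{(N)}}^n\le C/N$ valid on $\mathcal{E}^1$ with $C$ depending only on $\mathcal{E}_\omega(u_j)$ hence only on $\mathcal{E}_\omega(u)$), let $j\to\infty$ using Bedford--Taylor for the truncated piece, and finally let $N\to\infty$. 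The $u_j MA(u_j)$ statement is handled the same way, with the extra observation that on $\{u_j>-N\}$ one has $u_j=u_j^{(N)}$ so $u_jMA(u_j)$ and $u_j^{(N)}MA(u_j^{(N)})$ agree there, while the tail contribution $\int_{\{u_j\le-N\}}|u_j|\,MA(u_j)$ is again $\to 0$ uniformly in $j$ by finite energy (this is the definition-level estimate $\int(-u)MA(u)<\infty$ on $\mathcal{E}^1$, with uniform bound from the energy).

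The main obstacle is precisely this uniform tail control: one needs that finite energy is not just a pointwise property of the limit $u$ but propagates \emph{uniformly} down a decreasing sequence, so that the ``mass escaping to $\{u_j=-\infty\}$'' is small uniformly in $j$. The clean way to get this is to note that $t\mapsto\mathcal{E}_\omega(u_j^{(t)})$, where $u_j^{(t)}=\max(u_j,-t)$, is concave and increasing in $-t$ with derivative essentially the escaping mass, and that by monotonicity and the comparison principle all these functions are dominated by the corresponding quantities for $u$ (using $u_j\ge u$); hence a single bound in terms of $\mathcal{E}_\omega(u)$ suffices. Once this uniformity is in hand the rest is the routine exhaustion bookkeeping sketched above, and I would not spell it out in detail but rather refer to \cite{begz}, where these monotone-continuity properties of the non-pluripolar Monge--Amp\`ere operator and of $\mathcal{E}_\omega$ on $\mathcal{E}^1(X,\omega)$ are established; the proposition as stated is a direct consequence, assembled from their Propositions on the continuity of $\mathcal{E}_\omega$ under decreasing limits and on the convergence of $MA(u_j)$ and $u_jMA(u_j)$ along monotone sequences in $\mathcal{E}^1$.
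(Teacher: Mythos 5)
Your proposal is fine: the paper gives no argument for this proposition at all---it is explicitly ``collected'' from \cite{begz}---and your write-up amounts to the same thing, namely a sketch of the standard truncation/Bedford--Taylor exhaustion argument from \cite{begz} followed by a citation of that reference for the monotone-continuity properties, so the approaches agree. The one soft spot is the uniform tail control for $u_{j}MA(u_{j})$: a uniform bound on $\int(-u_{j})MA(u_{j})$ gives boundedness, not uniform smallness of $\int_{\{u_{j}\le-N\}}(-u_{j})MA(u_{j})$ (uniform integrability is needed, which \cite{begz} obtain by passing to a slightly faster-growing weight), but since you defer to \cite{begz} for precisely these facts this does not affect the conclusion.
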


\subsection{The pluricomplex energy $E_{\omega}(\mu)$ and potentials}

Following \cite{begz} we define the (pluricomplex) energy by \begin{equation}
E_{\omega}(\mu):=\sup_{\phi\in PSH(X,\omega)}\mathcal{E}_{\omega}(\phi)-\left\langle \phi,\mu\right\rangle \label{eq:def of e as sup}\end{equation}
if $\mu\in\mathcal{M}_{1}(X)$ (sometimes we will omit the subscript
$\omega$ and simple write $E_{\omega}=E).$ We will denote the subspace
of all finite energy probability measures by \[
E_{1}(X):=\{\mu:\, E_{\omega}(\mu)<\infty\}\]
(which only depends on the\emph{ class} $[\omega]$ and not on the
representative $\omega).$

By Propositions \ref{pro:non-pluripol ma} and Demailly's approximation
theorem it is enough to take the sup over all Kähler potentials. But
one point of working with less regular functions is that the sup can
be attained. Indeed, as recalled in the following theorem \begin{equation}
E_{\omega}(\mu):=\mathcal{E}_{\omega}(\phi_{\mu})-\left\langle \phi_{\mu},\mu\right\rangle \label{eq:energy in terms of pot}\end{equation}
for a unique potential $\phi_{\mu}\in\mathcal{E}^{1}(X,\omega)/\R$
of the measure $\mu$ if $E_{\omega}(\mu)<\infty$ where \begin{equation}
MA(\phi_{\mu})=\mu\label{eq:potential of meas}\end{equation}

\begin{thm}
\label{thm:var sol of ma}\cite{bbgz} The following is equivalent
for a probability measure $\mu$ on $X:$ 
\begin{itemize}
\item $E_{\omega}(\mu)<\infty$
\item \textup{$\left\langle \phi,\mu\right\rangle <\infty$ for all $\phi\in\mathcal{E}^{1}(X,\omega)$}
\item $\mu$ has a potential $\phi_{\mu}\in\mathcal{E}(X,\omega$), i.e.
equation \ref{eq:potential of meas} holds
\end{itemize}
Moreover, $\phi_{\mu}$ is uniquely determined mod $\R,$ i.e. up
to an additive constant and can be characterized as the function maximizing
the functional whose sup defines $E_{\omega}(\mu)$ (formula \ref{eq:def of e as sup}).
Even more generally: if $\phi_{j}$ is an asymptotically maximizing
sequence sequence (normalized so that $\sup_{X}\phi_{j}=0),$ i.e.
\[
\liminf_{j\rightarrow\infty}\mathcal{E}(\phi_{j})-\left\langle \phi_{j},\mu\right\rangle =E_{\omega}(\mu)\]
then $\phi_{j}\rightarrow\phi_{\mu}$ in $L^{1}(X,\mu)$ and $\mathcal{E}(\phi_{j})\rightarrow\mathcal{E}(\phi_{j}).$

\end{thm}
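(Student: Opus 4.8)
The plan is to establish the cycle $(1)\Rightarrow(3)\Rightarrow(1)$ together with $(1)\Leftrightarrow(2)$, and then to read off uniqueness and the convergence of asymptotic maximizers. Throughout, the key structural fact is that, because $MA_\omega(\phi)$ and $\mu$ both have unit mass, the functional $F_\mu(\phi):=\mathcal{E}_\omega(\phi)-\langle\phi,\mu\rangle$ whose supremum defines $E_\omega(\mu)$ in \ref{eq:def of e as sup} is invariant under $\phi\mapsto\phi+c$ and is concave (Propositions \ref{pro:of energy-1}, \ref{pro:energy is ups}); hence the supremum is unchanged if restricted to the slice $\mathcal{K}:=\{\phi\in PSH(X,\omega):\sup_X\phi=0\}$, which is compact for the $L^1(X)$-topology, and (by Proposition \ref{pro:energy is ups} and Demailly's regularization, as noted after \ref{eq:def of e as sup}) also if restricted to $\mathcal{E}^1(X,\omega)$.

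\emph{The cheap implications.} If $(3)$ holds, with $MA(\phi_\mu)=\mu$ and $\phi_\mu\in\mathcal{E}^1(X,\omega)$, then concavity of $\mathcal{E}_\omega$ and the fact that its differential at $\phi_\mu$ is represented by $MA(\phi_\mu)$ (the singular refinement of Proposition \ref{pro:of energy-1}; see \cite{begz,bbgz}) give the subgradient inequality $\mathcal{E}_\omega(\psi)-\mathcal{E}_\omega(\phi_\mu)\leq\langle\psi-\phi_\mu,\mu\rangle$ for all $\psi\in\mathcal{E}^1(X,\omega)$; rearranging shows $F_\mu(\psi)\leq F_\mu(\phi_\mu)$, so $\phi_\mu$ attains the supremum, which yields \ref{eq:energy in terms of pot}, and $E_\omega(\mu)=F_\mu(\phi_\mu)<\infty$ since $\mathcal{E}_\omega(\phi_\mu)>-\infty$ and $\langle\phi_\mu,MA(\phi_\mu)\rangle>-\infty$ for finite-energy potentials. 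For $(1)\Rightarrow(2)$: testing the supremum in \ref{eq:def of e as sup} against $\psi=\phi$ itself gives $\langle\phi,\mu\rangle\geq\mathcal{E}_\omega(\phi)-E_\omega(\mu)>-\infty$ for every $\phi\in\mathcal{E}^1(X,\omega)$. The converse $(2)\Rightarrow(1)$ is a uniform-boundedness argument: were $E_\omega(\mu)=+\infty$, one extracts a normalized sequence $\psi_j\in\mathcal{K}$ with $F_\mu(\psi_j)\to\infty$ and, using concavity of $\mathcal{E}_\omega$, assembles from it a single $\phi\in\mathcal{E}^1(X,\omega)$ with $\langle\phi,\mu\rangle=-\infty$, contradicting $(2)$ (cf.\ \cite{bbgz}).

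\emph{The main implication $(1)\Rightarrow(3)$.} Choose a maximizing sequence $\phi_j\in\mathcal{K}$; by compactness a subsequence converges in $L^1(X)$ to some $\phi_\mu\in PSH(X,\omega)$, which after subtracting $\sup_X\phi_\mu\,(\leq0)$ we take in $\mathcal{K}$. Upper semicontinuity of $\mathcal{E}_\omega$ (Proposition \ref{pro:energy is ups}) gives $\limsup_j\mathcal{E}_\omega(\phi_j)\leq\mathcal{E}_\omega(\phi_\mu)$; the crux is to prove $\langle\phi_j,\mu\rangle\to\langle\phi_\mu,\mu\rangle$, after which $F_\mu(\phi_\mu)\geq\limsup_jF_\mu(\phi_j)=E_\omega(\mu)$ and $\phi_\mu$ is a maximizer. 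The difficulty is that $F_\mu$ is \emph{not} upper semicontinuous for the $L^1$-topology -- the half-continuities of $\mathcal{E}_\omega$ (u.s.c.) and of $-\langle\cdot,\mu\rangle$ (l.s.c., by Fatou, since $E_\omega(\mu)<\infty$ forces $\mu$ to charge no pluripolar set) point the wrong way -- so one must exploit the finite-energy hypothesis to get uniform $\mu$-integrability of $\{-\phi_j\}$: concretely, one first checks that the supremum in \ref{eq:def of e as sup} is unchanged if restricted to $\omega$-psh functions of uniformly bounded energy, so the maximizing sequence ranges in a set on which $\phi\mapsto\langle\phi,\mu\rangle$ is continuous for the $L^1$-topology (via the mixed Monge-Amp\`ere integration-by-parts inequalities of \cite{bbgz,begz}), and then, since $L^1$-convergence of $\omega$-psh functions of uniformly bounded energy implies convergence in capacity and hence $\mu$-a.e.\ convergence along a further subsequence, Vitali's theorem gives the claim. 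Once $\phi_\mu$ maximizes $F_\mu$, the Euler--Lagrange equation $MA(\phi_\mu)=\mu$ follows by perturbing along envelopes: for $v\in C^\infty(X)$ put $\phi_\mu^{(t)}:=P_\omega(\phi_\mu+tv)$, the largest $\omega$-psh function $\leq\phi_\mu+tv$; the differentiability of $\mathcal{E}_\omega$ along such envelopes, $\left.\frac{d}{dt}\right|_{t=0}\mathcal{E}_\omega(P_\omega(\phi_\mu+tv))=\int_Xv\,MA(\phi_\mu)$ (which rests on the orthogonality relation that $MA(P_\omega u)$ puts no mass on $\{P_\omega u<u\}$), together with $\langle\phi_\mu^{(t)},\mu\rangle\leq\langle\phi_\mu+tv,\mu\rangle$ and the maximality of $\phi_\mu$, gives $\int_Xv\,MA(\phi_\mu)\leq\int_Xv\,d\mu$ for $v\geq0$ and the reverse inequality by symmetrization; hence $MA(\phi_\mu)=\mu$, and $\langle\phi_\mu,\mu\rangle>-\infty$ then forces $\phi_\mu\in\mathcal{E}^1(X,\omega)$.

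\emph{Uniqueness and asymptotic maximizers.} If $MA(\phi_1)=MA(\phi_2)=\mu$ with $\phi_i\in\mathcal{E}^1(X,\omega)$, the subgradient inequality shows both maximize $F_\mu$; the equality case in the concavity of $\mathcal{E}_\omega$ -- equivalently the comparison principle for $MA$ on $\mathcal{E}^1(X,\omega)$ (\cite{begz}) -- then forces $\phi_1-\phi_2$ constant, which is uniqueness mod $\R$ and the characterization of $\phi_\mu$. Finally, for an asymptotically maximizing sequence normalized by $\sup_X\phi_j=0$: every $L^1(X)$-cluster point is, by the argument above, a maximizer, hence $=\phi_\mu$ by uniqueness, so $\phi_j\to\phi_\mu$ in $L^1(X)$; the uniform $\mu$-integrability estimate upgrades this to $L^1(X,\mu)$, and then $\mathcal{E}_\omega(\phi_j)=F_\mu(\phi_j)+\langle\phi_j,\mu\rangle\to E_\omega(\mu)+\langle\phi_\mu,\mu\rangle=\mathcal{E}_\omega(\phi_\mu)$. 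The main obstacle is thus the attainment of the supremum -- $F_\mu$ fails to be upper semicontinuous, so uniform $\mu$-integrability must be squeezed out of the finite-energy hypothesis -- with the differentiability of $\mathcal{E}_\omega$ along envelopes a second delicate point.
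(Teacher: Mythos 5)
The first thing to say is that the paper itself gives no proof of this statement: it is quoted from \cite{bbgz}, the text only indicating that the crucial ingredient is the differentiability theorem (Theorem \ref{thm:diff thm}) together with the orthogonality relation \ref{eq:og relatgion}. Your proposal reconstructs precisely that variational argument, namely coercivity plus compactness to produce a maximizer of $F_{\mu}=\mathcal{E}_{\omega}-\left\langle \cdot,\mu\right\rangle $, the projection trick $t\mapsto\mathcal{E}_{\omega}(P_{\omega}(\phi_{\mu}+tv))$ to derive $MA(\phi_{\mu})=\mu$, and concavity for the converse implication, so in outline you are on the same route as the proof the paper points to. Two caveats on ingredients: Theorem \ref{thm:diff thm} is stated for continuous data, while you differentiate along $\phi_{\mu}+tv$ with $\phi_{\mu}$ merely of finite energy (the required extension is in \cite{bbgz}, not in the statement quoted here); and for $n>1$ uniqueness modulo constants is not just the equality case of concavity but a separate nontrivial theorem (due to Dinew, used in \cite{bbgz}).

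The genuine gap sits exactly at the step you yourself identify as the crux. To pass to the limit along the maximizing sequence you assert that $L^{1}(X)$-convergence of $\omega$-psh functions with uniformly bounded energies implies convergence in capacity, hence $\mu$-a.e.\ convergence along a subsequence, and you then conclude by Vitali. That implication is false. Take on a neighborhood of the closed unit disc (or its $\omega$-psh avatar on $\P^{1}$) the uniformly bounded functions $u_{j}:=\max(j^{-1}\log|z^{j}-1|,-1)$: they converge in $L^{1}$ to $\log^{+}|z|$, but on the sets $\{|z^{j}-1|\le e^{-\delta j}\}$ one has $|u_{j}-\log^{+}|z||\ge\delta$, and since for a monic polynomial $Q$ of degree $d$ one has $\mathrm{cap}\{|Q|\le\epsilon\}=\epsilon^{1/d}$, these sets have logarithmic capacity $e^{-\delta}$, bounded away from zero; so there is no convergence in capacity, and no $\mu$-a.e.\ convergence of the actual sequence is available (one only gets $(\limsup_{j}u_{j})^{*}=u$ off a pluripolar set, which via Fatou gives the semicontinuity in the wrong direction, as you noted). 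As written, then, neither the attainment of the supremum nor the final assertions $\phi_{j}\rightarrow\phi_{\mu}$ in $L^{1}(\mu)$ and $\mathcal{E}(\phi_{j})\rightarrow\mathcal{E}(\phi_{\mu})$ are established. In \cite{bbgz} this point is handled by different means: quantitative integrability estimates for finite-energy measures, of the type $\int(-\phi)d\mu\le C(1+|\mathcal{E}_{\omega}(\phi)|^{1/2})$ (the Prop 3.4 of \cite{bbgz} quoted later in this paper), and the notion of convergence in energy, cf.\ Theorem 4.7 of \cite{bbgz}, which the present paper itself invokes in Lemma \ref{lem:appr} for exactly this purpose. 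A smaller but related gloss occurs in your $(2)\Rightarrow(1)$: the naive convex combination $\sum_{j}2^{-j}\psi_{j}$ need not have finite energy, and after the rescaling needed to fix this the argument only yields a bound $\int(-\psi)d\mu\le C(1+|\mathcal{E}_{\omega}(\psi)|)$, which is linear in the energy and hence not sufficient to bound $F_{\mu}$ from above; here too the sublinear estimate of \cite{bbgz} is what is really needed.
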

\begin{rem}
In the proof of the large deviation principle (LDP) in section \ref{sec:Large-deviations}
we will only use the existence of a potential $\phi_{\mu}$ for a
given measure $\mu$ of finite energy (and not the uniqueness). As
for the maximization property of $\phi_{\mu}$ it will follow from
the proof of the LDP, but it is also a simple consequence of the concavity
of the functional $\mathcal{E}_{\omega}$ on the space $\mathcal{E}^{1}(X,\omega).$
\end{rem}
The previous theorem was proved in \cite{bbgz} using the variational
approach in the more general setting of a big class $[\omega]$ -
one crucial ingredient in the proof is the differentiability theorem
\ref{thm:diff thm} below. In the case when $\mu$ is a volume form
Yau's seminal theorem \cite{y} furnishes a \emph{smooth }potential
$\phi_{\mu},$ i.e a Kähler potential (using the continuity method
for PDEs and delicate a priori estimates).

\subsection{\label{sub:The-psh-projection}The psh projection $P$ and the equilibrium
measure}

Given a compact non-pluripolar set $K$ and a (possibly singular)
function $\phi$ one defines the $\omega-$psh function $P_{(K,\omega)}\phi)(x)$
as the following regularized upper envelope: \begin{equation}
(P_{(K,\omega)}\phi)(x):=\left(\sup\left\{ \psi(x):\,\psi\in PSH(X,\omega),\,\psi\leq\phi\,\,\textrm{on\ensuremath{\, K}}\right\} \right)^{*}\label{eq:def of proj as reg env}\end{equation}
 where the star denotes upper semi-continuous regularization (we will
often omit the sub-script $\omega)$. If $\phi$ is continuous, then
$P_{(K,\omega)}\phi$ is locally bounded precisely when $K$ is non-pluripolar
(see section \ref{sub:Bernstein-Markov-measures}). Now the pluripotential
equilibrium measure of a weighted non-pluripolar set $(K,\omega)$
may be defined as the following measure \[
\mu_{eq}:=MA(P_{(K,\omega)}0)\]
supported on $X.$ This is the global version of the original definition
given by Siciak in the context of approximation theory in $\C^{n}$
(see \cite{g-z} and references there in). An alternative \emph{variational}
characterization of the equilibrium measure was given very recently
in \cite{bbgz}, which will play a prominent rule in this paper (see
below).

When $K=X$ we will simply write \[
P_{(X,\omega)}=P_{\omega}(=P)\]
If $\phi$ is continuous then so is $P_{\omega}\phi,$ even without
using the upper semi-continuous regularization. Indeed, the lower
semi-continuity of $P_{\omega}\phi$ follows from Demailly's approximation
result which allows us to write $P_{\omega}\phi$ as an upper envelope
of continuous functions and the upper semi-continuity is obtained
by noting that $P_{\omega}\phi$ is a candidate for the sup in its
definition \cite{g-z,b-b}. One of the main results in \cite{b-b}
is the following
\begin{thm}
\label{thm:diff thm}(B.-Boucksom \cite{b-b}) Let $K$ be a compact
non-pluripolar subset of $X.$ Then the functional $\mathcal{E_{\omega}}\circ P_{(K,\omega)}$
is concave and Gateaux differentiable on $C^{0}(X).$ More precisely,
\[
d(\mathcal{E_{\omega}}\circ P_{(K,\omega)})_{|\phi}=MA(\mathcal{E_{\omega}}(P_{(K,\omega)}\phi))\]

\end{thm}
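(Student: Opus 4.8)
The plan is to establish the two assertions of Theorem~\ref{thm:diff thm} — concavity and Gateaux differentiability of $\mathcal{E}_\omega\circ P_{(K,\omega)}$ on $C^0(X)$, together with the formula for its differential — by exploiting the concavity of $\mathcal{E}_\omega$ (Proposition~\ref{pro:energy is ups}) and the variational/projection structure of $P_{(K,\omega)}$. First I would record the elementary but crucial fact that $P=P_{(K,\omega)}$ is monotone ($\phi\le\psi$ on $K$ implies $P\phi\le P\psi$) and that it is a \emph{projection} in the order-theoretic sense: $P\phi\le\phi$ on $K$, $P(P\phi)=P\phi$, and $P(\phi+c)=P\phi+c$ for constants $c$. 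Next, concavity: given $\phi_0,\phi_1\in C^0(X)$ and $t\in[0,1]$, the function $(1-t)P\phi_0+tP\phi_1$ is $\omega$-psh and is $\le(1-t)\phi_0+t\phi_1$ on $K$, hence is a candidate in the envelope defining $P((1-t)\phi_0+t\phi_1)$; therefore $P((1-t)\phi_0+t\phi_1)\ge(1-t)P\phi_0+tP\phi_1$. Combining this with the monotonicity and concavity of $\mathcal{E}_\omega$ on $PSH(X,\omega)$ (Proposition~\ref{pro:energy is ups}) gives
\[
\mathcal{E}_\omega\big(P((1-t)\phi_0+t\phi_1)\big)\ \ge\ \mathcal{E}_\omega\big((1-t)P\phi_0+tP\phi_1\big)\ \ge\ (1-t)\,\mathcal{E}_\omega(P\phi_0)+t\,\mathcal{E}_\omega(P\phi_1),
\]
which is the desired concavity.

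For differentiability, fix $\phi\in C^0(X)$ and $v\in C^0(X)$ and study $g(t):=\mathcal{E}_\omega(P(\phi+tv))$, which by the above is concave in $t$, hence has one-sided derivatives $g'(0^\pm)$ with $g'(0^+)\le g'(0^-)$; the goal is to show both equal $\int_X MA(P\phi)\,v$, which forces the Gateaux derivative to exist and be linear in $v$. The upper bound $g'(0^+)\le\int_X MA(P\phi)\,v$ comes from comparing $P(\phi+tv)$ with the competitor $P\phi+tv$ on $K$ — more precisely, one shows $P(\phi+tv)\le P\phi+t\sup_K v$-type estimates are too crude, so instead one uses that $P\phi+t\,v$ is an \emph{upper} barrier only where $v\ge0$; the clean argument is to note $P(\phi+tv)-P(\phi)\le tv$ on the contact set $\{P\phi=\phi\}\cap K$, and that $MA(P\phi)$ is supported on this contact set (a standard fact about envelopes, cf.\ the equilibrium-measure literature), then integrate. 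The matching lower bound uses the first-variation formula for $\mathcal{E}_\omega$ from Proposition~\ref{pro:of energy-1}, namely $\frac{d}{dt}\mathcal{E}_\omega(u+tw)=\int_X MA(u)\,w$, applied with $u=P(\phi+tv)$ along a path, after establishing continuity of $t\mapsto MA(P(\phi+tv))$ in the weak topology (via Proposition~\ref{pro:non-pluripol ma} together with the continuity of $t\mapsto P(\phi+tv)$, which itself follows from the uniform estimate $\|P(\phi+tv)-P\phi\|_\infty\le |t|\,\|v\|_\infty$ and Demailly regularization). Squeezing $g'(0^-)\ge\int_X MA(P\phi)\,v\ge g'(0^+)$ then closes the argument, and linearity of $v\mapsto\int_X MA(P\phi)\,v$ gives Gateaux differentiability with $d(\mathcal{E}_\omega\circ P)_{|\phi}=MA(P_{(K,\omega)}\phi)$.

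The main obstacle I anticipate is the rigorous identification of $g'(0^\pm)$ with $\int_X MA(P\phi)\,v$ — specifically, controlling how $MA(P(\phi+tv))$ degenerates as $t\to0$ and justifying differentiation under the functional $\mathcal{E}_\omega$ when the potentials $P(\phi+tv)$ are only bounded (not smooth) $\omega$-psh functions. This requires the full strength of the Bedford--Taylor continuity theory packaged in Propositions~\ref{pro:energy is ups} and~\ref{pro:non-pluripol ma}, plus the geometric input that $MA(P_{(K,\omega)}\phi)$ lives on the contact set $\{P\phi=\phi\}$ — the latter being the analogue, in this global pluripotential setting, of the classical fact that the equilibrium measure is supported where the obstacle is active. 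A secondary technical point is that $P_{(K,\omega)}\phi$ for $K\subsetneq X$ need not be continuous on all of $X$ even for continuous $\phi$ (unlike the case $K=X$), so the weak-continuity arguments must be carried out carefully using the boundedness guaranteed by non-pluripolarity of $K$ rather than continuity.
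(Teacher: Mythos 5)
The paper itself does not prove Theorem \ref{thm:diff thm}; it quotes it from \cite{b-b} and only flags the orthogonality relation \ref{eq:og relatgion} as the key ingredient, so your proposal should be measured against that argument, which runs via two-sided concavity (tangent-line) estimates for $\mathcal{E}_{\omega}$ combined with orthogonality both at $\phi$ and at the perturbed weight. Your concavity argument for $\mathcal{E}_{\omega}\circ P_{(K,\omega)}$ is correct, as is the Lipschitz estimate $\|P(\phi+tv)-P\phi\|_{\infty}\le|t|\,\|v\|_{\infty}$, and your upper bound $g'(0^{+})\le\int_{X}v\,MA(P\phi)$ is fine once you make explicit the inequality $\mathcal{E}(u_{1})-\mathcal{E}(u_{0})\le\int(u_{1}-u_{0})\,MA(u_{0})$ for bounded $\omega$-psh $u_{0},u_{1}$ (concavity of $\mathcal{E}_{\omega}$ plus its first variation in the Bedford--Taylor setting), which is the step hidden in your phrase ``then integrate''.

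The genuine gap is in how you close the argument. For a concave $g$ the pair of inequalities you propose to squeeze with, $g'(0^{+})\le\int v\,MA(P\phi)\le g'(0^{-})$, is exactly what holds at a corner; it is compatible with $g'(0^{+})<g'(0^{-})$ and so proves nothing about differentiability. What is needed are the \emph{reverse} bounds $g'(0^{+})\ge\int v\,MA(P\phi)$ and $g'(0^{-})\le\int v\,MA(P\phi)$, and your route to them --- ``the first-variation formula applied with $u=P(\phi+tv)$ along a path'' --- is not licensed: the path $t\mapsto P(\phi+tv)$ is not affine and $P_{(K,\omega)}$ is not differentiable (the paper stresses precisely this point), so you cannot differentiate $\mathcal{E}_{\omega}$ along it. The correct step, which is the heart of the argument in \cite{b-b}, is the tangent inequality at the \emph{moving} point, $\mathcal{E}(P(\phi+tv))-\mathcal{E}(P\phi)\ge\int\bigl(P(\phi+tv)-P\phi\bigr)MA(P(\phi+tv))$, combined with the orthogonality relation at the perturbed weight: $MA(P(\phi+tv))$ gives full mass to the set where $P(\phi+tv)=\phi+tv$ on $K$, and there $P\phi\le\phi$ quasi-everywhere (the exceptional set is pluripolar, hence null for this measure since $P(\phi+tv)$ is bounded), so $P(\phi+tv)-P\phi\ge tv$ almost everywhere for $MA(P(\phi+tv))$. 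This gives $(g(t)-g(0))/t\ge\int v\,MA(P(\phi+tv))$ for $t>0$ and the opposite inequality for $t<0$; letting $t\to0$ and using the weak convergence $MA(P(\phi+tv))\rightarrow MA(P\phi)$ under uniform convergence of uniformly bounded $\omega$-psh functions (Bedford--Taylor; note Proposition \ref{pro:non-pluripol ma} as stated only covers decreasing sequences) yields $g'(0^{\pm})=\int v\,MA(P\phi)$. So your toolbox is the right one, but the orthogonality relation must be invoked at the perturbed time $t$, not only at $t=0$, and the squeeze must be run in the opposite direction.
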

It should be emphasized that the differentiability result above is
in a sense very surprising (even when $\phi$ is smooth and $K=X)$
. Indeed, the projection operator $P_{(K,\omega)}$ is certainly not
differentiable. Moreover, the functional $\mathcal{E_{\omega}}\circ P_{(K,\omega)}$
is in general not\emph{ two} times differentiable. From a statistical
mechanical point of view the one time differentiability corresponds
to an absence of a first order phase transition (see \cite{berm3}).
An important ingredient in the proof of the previous theorem is the
orthogonality relation (itself a consequence of the maximum principle
for $MA)$ \begin{equation}
\left\langle MA(P_{(K,\omega)}\phi)),\phi-P_{(K,\omega)}\phi\right\rangle =0\label{eq:og relatgion}\end{equation}
saying that $P_{(K,\omega)}\phi=\phi$ a.e. wrt the measure $MA(P_{(K,\omega)}\phi)$

\subsection{\label{sub:Further-properties-of}Further properties of the energy
$E_{\omega}(\mu)$}
\begin{prop}
\label{pro:prop of s}The following properties of the energy $E(\mu)$
(formula \ref{eq:def of e as sup}) hold:
\begin{itemize}
\item Assume that $E(\mu)<\infty.$ Then the sup defining the energy $E(\mu)$
may be taken over the subset of all \emph{continuous} $\omega-$psh
functions. More generally, if $\mu$ is supported on a compact set
$K$ in $X,$ then\begin{equation}
E(\mu):=\sup_{\phi}\mathcal{E}(P_{K}\phi)-\int_{X}\phi\mu\label{eq:s in terms of cont}\end{equation}
 where $\phi$ ranges over either all continuous $\omega-$psh functions
on $X$ or over all of $C^{0}(X).$ 
\item The functional $E$ is lower semi-continuous (lsc) on $\mathcal{P}(K)$
if $K$ is compact in $X$
\end{itemize}
\end{prop}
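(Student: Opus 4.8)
The plan is to read off all three assertions from the variational definition \ref{eq:def of e as sup} of $E_{\omega}(\mu)$ together with one input noted just after it: by Demailly's regularization theorem and Proposition \ref{pro:non-pluripol ma}, the supremum in \ref{eq:def of e as sup} is unchanged if $\phi$ is restricted to K\"ahler potentials. Since K\"ahler potentials are in particular continuous $\omega$-psh functions and continuous $\omega$-psh functions belong to $PSH(X,\omega)$, the supremum over continuous $\omega$-psh functions is squeezed between two quantities both equal to $E_{\omega}(\mu)$; this proves the first assertion of the first bullet. (This squeezing holds for every $\mu\in\mathcal{P}(X)$, a point that will be used for lower semicontinuity.)

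For the reformulation \ref{eq:s in terms of cont}, assume $E_{\omega}(\mu)<\infty$ with $\mu$ supported on the compact set $K$, and prove two inequalities. For the bound $\geq E_{\omega}(\mu)$: if $\phi$ is continuous and $\omega$-psh it is itself a competitor in the upper envelope \ref{eq:def of proj as reg env} defining $P_{K}\phi$, so $P_{K}\phi\geq\phi$ on $X$, and since $\mathcal{E}_{\omega}$ is non-decreasing (Proposition \ref{pro:of energy-1}) we get $\mathcal{E}(P_{K}\phi)-\int_{X}\phi\,\mu\geq\mathcal{E}_{\omega}(\phi)-\int_{X}\phi\,\mu$; taking the supremum over such $\phi$ and invoking the first bullet bounds the right-hand side of \ref{eq:s in terms of cont} below by $E_{\omega}(\mu)$, and enlarging the class of test functions to all of $C^{0}(X)$ only increases it. For the bound $\leq E_{\omega}(\mu)$: let $\phi\in C^{0}(X)$; since $K$ is non-pluripolar, $P_{K}\phi$ is a locally bounded $\omega$-psh function (as recalled in \ref{sub:The-psh-projection}), hence an admissible competitor in \ref{eq:def of e as sup}, and $P_{K}\phi\leq\phi$ on $K$ outside a pluripolar set. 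By Theorem \ref{thm:var sol of ma}, $E_{\omega}(\mu)<\infty$ forces $\mu=MA(\phi_{\mu})$ with $\phi_{\mu}\in\mathcal{E}^{1}(X,\omega)$, so $\mu$ charges no pluripolar set; hence $\int_{X}(P_{K}\phi)\,\mu\leq\int_{X}\phi\,\mu$ (both integrals being over $K$), and therefore $\mathcal{E}(P_{K}\phi)-\int_{X}\phi\,\mu\leq\mathcal{E}(P_{K}\phi)-\int_{X}(P_{K}\phi)\,\mu\leq E_{\omega}(\mu)$ by \ref{eq:def of e as sup}. Taking the supremum over $\phi\in C^{0}(X)$ finishes \ref{eq:s in terms of cont}.

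The second bullet then follows at once. By the first paragraph, for every $\mu\in\mathcal{P}(K)$ one has $E_{\omega}(\mu)=\sup_{\phi}(\mathcal{E}_{\omega}(\phi)-\int_{X}\phi\,\mu)$ with $\phi$ running over the continuous $\omega$-psh functions; each such $\phi$ is bounded, so $\mathcal{E}_{\omega}(\phi)$ is a finite constant, while $\mu\mapsto\int_{X}\phi\,\mu$ is continuous for the weak topology of $\mathcal{P}(K)$ (as $\phi$ is continuous on the compact $X$), and hence each term is weakly continuous in $\mu$. A pointwise supremum of continuous functions is lower semicontinuous, so $E_{\omega}$ is lsc on $\mathcal{P}(K)$. (Equivalently one may use the $C^{0}(X)$-version of \ref{eq:s in terms of cont}, whose terms $\mu\mapsto\mathcal{E}(P_{K}\phi)-\int_{X}\phi\,\mu$ are likewise weakly continuous in $\mu$.)

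I expect the only genuinely non-formal point to be the bound $\leq E_{\omega}(\mu)$ in \ref{eq:s in terms of cont}: one must know that $P_{K}\phi$ is a legitimate locally bounded $\omega$-psh competitor --- which is exactly where non-pluripolarity of $K$ enters --- and that replacing $\phi$ by $P_{K}\phi$ in the linear term does not increase the integral, which rests on the fact that a finite-energy measure puts no mass on pluripolar sets. Everything else reduces to the monotonicity and concavity of $\mathcal{E}_{\omega}$, enlargement of the class of test functions, and weak continuity of linear statistics on $\mathcal{P}(K)$.
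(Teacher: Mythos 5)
Your argument is correct and follows essentially the same route as the paper: restrict the defining sup to continuous (K\"ahler) potentials via Demailly approximation and continuity along decreasing sequences, use $P_{K}\phi\geq\phi$ together with monotonicity of $\mathcal{E}_{\omega}$, $P_{K}\phi\leq\phi$ quasi-everywhere on $K$ and the fact that a finite-energy measure charges no pluripolar set, and obtain lower semicontinuity as a supremum of weakly continuous affine functionals. The only (cosmetic) difference is that you plug $P_{K}\phi$ directly into the defining sup \ref{eq:def of e as sup}, whereas the paper additionally approximates $P_{K}\phi$ by a decreasing sequence of continuous $\omega$-psh functions.
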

\begin{proof}
Given the set $K$ and $\phi\in C^{0}(X)\cap PSH(X,\omega)$ we note
that. by definition, $P_{K}\phi\geq\phi$ on $X$ (and $P_{K}\phi=\phi$
on $K).$ Hence, $\mathcal{E}(P_{K}\phi)-\int_{X}\phi\mu\geq\mathcal{E}(\phi)-\int_{X}\phi\mu$
and since $P_{K}\phi$ is a candidate for the sup defining $E(\mu)$
this proves the statement when $\phi$ ranges over continuous $\omega-$psh
functions. Next, if $\phi$ is merely continuous then we decompose
\[
\mathcal{E}(P_{K}\phi)-\int_{X}\phi\mu=\left(\mathcal{E}(P_{K}\phi)-\int_{X}P_{K}\phi\mu\right)+\int_{X}(P_{K}\phi-\phi)\mu\]
As is well-known $P_{K}\phi\leq\phi$ quasi-everywhere, i.e. away
from a pluripolar set. But since $E(\mu)<\infty$ the measure $\mu$
does not charge pluripolar sets \cite{bbgz} and hence setting $\psi:=P_{K}\phi$
gives $\mathcal{E}(\psi)-\int_{X}\psi\mu\leq E(\mu).$ Finally, writing
$\psi$ as a decreasing limit of elements $\psi_{j}\in C^{0}(X)\cap PSH(X,\omega)$
and using the previous case for $\phi=\psi_{j}$ finishes the proof
of the first point.

As for the lower semi-continuity of $E$ it follows immediately from
the fact that $E$ is defined as a sup of continuous functionals.
\end{proof}
We will also have use for the following approximation lemma of independent
interest
\begin{lem}
\label{lem:appr}Assume that $\mu$ is a probability measure supported
on a compact set $K$ such that $E(\mu)$ is finite. Let $\phi_{\mu}$
be a potential of $\mu$ and take a sequence $\phi_{j}$ in $C^{0}(X)\cap PSH(X,\omega)$
such that $\phi_{j}$ decreases to $\phi.$ Then $\mu_{j}:=MA(P_{K}\phi_{j})\rightarrow\mu$
and $E(\mu_{j})\rightarrow\mu.$\end{lem}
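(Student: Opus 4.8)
The plan is to follow the decreasing sequence of projections $P_{K}\phi_{j}$, to identify its limit with the given potential $\phi_{\mu}$ (we write $\phi_{\mu}$ for the potential and read the last convergence in the statement as $E(\mu_{j})\to E(\mu)$), and then to feed this into the continuity of $\mathcal{E}_{\omega}$ and of $MA$ along decreasing sequences of finite energy recorded in Proposition~\ref{pro:non-pluripol ma}. Observe first that $K$ is automatically non-pluripolar, since $E(\mu)<\infty$ forces $\mu$ to charge no pluripolar set \cite{bbgz}.

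First I would record the elementary features of the $P_{K}\phi_{j}$. Because $\phi_{j}\downarrow\phi_{\mu}$ we have $\phi_{\mu}\le\phi_{j}$; since moreover $\phi_{j}\in PSH(X,\omega)$, the function $\phi_{j}$ itself competes in the envelope defining $P_{K}\phi_{j}$ (formula \ref{eq:def of proj as reg env}), so $P_{K}\phi_{j}\ge\phi_{j}\ge\phi_{\mu}$ on all of $X$, while on $K$ every competitor is $\le\phi_{j}$, whence $P_{K}\phi_{j}=\phi_{j}$ on $K$ outside a pluripolar set (the usual exceptional set of the upper regularisation). As $K$ is non-pluripolar and $\phi_{j}$ is continuous, $P_{K}\phi_{j}$ is bounded, hence lies in $\mathcal{E}^{1}(X,\omega)$; thus $\mu_{j}:=MA(P_{K}\phi_{j})$ is a probability measure with bounded potential $P_{K}\phi_{j}$, and by formula \ref{eq:energy in terms of pot} (which is insensitive to the additive normalisation of the potential) $E(\mu_{j})=\mathcal{E}_{\omega}(P_{K}\phi_{j})-\langle P_{K}\phi_{j},\mu_{j}\rangle<\infty$. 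Finally $\phi_{j+1}\le\phi_{j}$ on $K$ shrinks the family of competitors, so $P_{K}\phi_{j}$ decreases to some $\psi_{\infty}\in PSH(X,\omega)$ with $\psi_{\infty}\ge\phi_{\mu}$, and monotonicity of $\mathcal{E}_{\omega}$ gives $\psi_{\infty}\in\mathcal{E}^{1}(X,\omega)$.

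The heart of the matter, which I expect to be the main obstacle, is the identification $\psi_{\infty}=\phi_{\mu}$: a priori $\psi_{\infty}$ could be strictly larger than $\phi_{\mu}$ off $K$, since the projection ``fills in'' the complement. Letting $N$ be the pluripolar union over $j$ of the sets where $P_{K}\phi_{j}\neq\phi_{j}$ on $K$, on $K\setminus N$ we have $\psi_{\infty}=\lim_{j}P_{K}\phi_{j}=\lim_{j}\phi_{j}=\phi_{\mu}$, i.e. $\psi_{\infty}=\phi_{\mu}$ quasi-everywhere on $K$. Since $\mu$ charges no pluripolar set and $\mathrm{supp}\,\mu\subseteq K$, this gives $\psi_{\infty}=\phi_{\mu}$ $\mu$-almost everywhere, hence $\psi_{\infty}\le\phi_{\mu}$ almost everywhere with respect to $MA(\phi_{\mu})=\mu$. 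The domination principle for functions of finite energy \cite{begz} then forces $\psi_{\infty}\le\phi_{\mu}$ on all of $X$, and together with $\psi_{\infty}\ge\phi_{\mu}$ we conclude $\psi_{\infty}=\phi_{\mu}$. It is precisely here that one exploits that $MA(\phi_{\mu})$ is concentrated on $K$.

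It remains to pass to the limit. Now $P_{K}\phi_{j}\downarrow\phi_{\mu}$ is a decreasing sequence in $\mathcal{E}^{1}(X,\omega)$ with limit in $\mathcal{E}^{1}(X,\omega)$, so Proposition~\ref{pro:non-pluripol ma} applies: $\mu_{j}=MA(P_{K}\phi_{j})\to MA(\phi_{\mu})=\mu$ weakly, which is the first assertion; moreover $\mathcal{E}_{\omega}(P_{K}\phi_{j})\to\mathcal{E}_{\omega}(\phi_{\mu})$ and $P_{K}\phi_{j}\,MA(P_{K}\phi_{j})\to\phi_{\mu}\,MA(\phi_{\mu})=\phi_{\mu}\mu$ in the weak topology of measures, so testing the latter against the constant $1$ gives $\langle P_{K}\phi_{j},\mu_{j}\rangle\to\langle\phi_{\mu},\mu\rangle$. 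Combining with the expression for $E(\mu_{j})$ from the first step,
\[
E(\mu_{j})=\mathcal{E}_{\omega}(P_{K}\phi_{j})-\langle P_{K}\phi_{j},\mu_{j}\rangle\longrightarrow\mathcal{E}_{\omega}(\phi_{\mu})-\langle\phi_{\mu},\mu\rangle=E(\mu),
\]
the last equality being formula \ref{eq:energy in terms of pot}. This yields $E(\mu_{j})\to E(\mu)$ and completes the proof.
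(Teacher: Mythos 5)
Your proof is correct, but it takes a genuinely different route from the paper's. The paper argues variationally: from $P_{K}\phi_{j}\geq\phi_{j}\geq\phi_{\mu}$ and $P_{K}\phi_{j}=\phi_{j}$ $\mu$-a.e. it deduces that $P_{K}\phi_{j}$ is an asymptotically maximizing sequence for $\psi\mapsto\mathcal{E}(\psi)-\left\langle \psi,\mu\right\rangle$, then invokes the stability result for maximizers (Theorem 4.7 in \cite{bbgz}, quoted here as Theorem \ref{thm:var sol of ma}) to get $P_{K}\phi_{j}\rightarrow\phi_{\mu}+c$ {}``in energy'', pins down $c=0$ by integrating against $\mu$, and uses the fact that convergence in energy implies convergence of the relevant mixed Monge-Amp�re expressions to obtain both $MA(P_{K}\phi_{j})\rightarrow\mu$ and $E(\mu_{j})\rightarrow E(\mu)$. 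You instead identify the decreasing limit $\psi_{\infty}$ of the projections with $\phi_{\mu}$ directly: quasi-everywhere equality on $K$, the fact that a finite-energy $\mu$ charges no pluripolar set, and the domination principle of \cite{begz} (a tool the paper itself uses in the proof of Lemma \ref{lem:pf of non-cpt ldp}) give $\psi_{\infty}=\phi_{\mu}$, after which the decreasing-continuity statements of Proposition \ref{pro:non-pluripol ma} deliver both conclusions, the term $\left\langle P_{K}\phi_{j},\mu_{j}\right\rangle$ being handled by testing $u_{j}MA(u_{j})\rightarrow uMA(u)$ against the constant function. Your route trades the {}``convergence in energy'' machinery of \cite{bbgz} for the domination principle, and it has the pleasant byproduct of exhibiting $P_{K}\phi_{j}\downarrow\phi_{\mu}$ pointwise, hence $P_{K}\phi_{\mu}=\phi_{\mu}$, which is precisely the property discussed in the remark following the lemma and which the paper's argument recovers only indirectly. (You also read the statement's two typos correctly: the $\phi_{j}$ decrease to $\phi_{\mu}$ and the conclusion is $E(\mu_{j})\rightarrow E(\mu)$.)
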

\begin{proof}
Since by definition $P_{K}\phi_{j}\geq\phi_{j}\geq\phi_{\mu}$ and
$P_{K}\phi_{j}=\phi_{j}$ $\mu-$a.e. we have \[
\mathcal{E}(P_{K}\phi_{j})-\int_{X}P_{K}\phi_{j}\mu\geq\mathcal{E}(\phi)-\int_{X}\phi_{j}\mu\rightarrow\mathcal{E}(\phi_{\mu})-\int_{X}\phi_{\mu}\mu(=E(\mu))\]
using the monotone convergence theorem in the last step. In other
words, the sequence $P_{K}\phi_{j}$ is asymptotically maximizing
for the functional whose unique maximizer is $\mu$ and hence it follows
from Theorem 4.7 in \cite{bbgz} that $P_{K}\phi_{j}\rightarrow\phi_{\mu}+c$
{}``in energy'' for $c\in\R.$ But integrating against $\mu$ and
using that $P_{K}\phi_{j}=P_{K}\phi$ on the set $K$ (where $\mu$
is supported) forces $c=0.$ For the precise definition of {}``convergence
in energy'' see \cite{bbgz}, but the main point is that it implies
convergence of all relevant mixed Monge-Ampère measures (see Lemma
3.2 in \cite{bbgz}) and in particular $MA(P_{K}\phi_{j})\rightarrow\mu$
and $E(\mu_{j})\rightarrow E(\mu).$\end{proof}
\begin{rem}
The proof of the previous lemma also shows that $P_{K}\phi_{\mu}=\phi_{\mu}$
for any measure $\mu$ of finite energy supported on a compact set
$K$ (when $\phi_{\mu}$ is continuous this is an immediate consequence
of the standard domination principle). Indeed, by definition $P_{K}\phi_{j}\geq P_{K}\phi_{\mu}\geq\phi_{\mu}$
and by the proof above $P_{K}\phi_{j}\rightarrow\phi_{\mu}.$ This
property was used without any explicit proof in the previous version
of the paper (thanks to Norm Levenberg for pointing this out) and
a different proof by Dinew was supplied in \cite{b-l-1}. 
\end{rem}
The following proposition gives an explicit expression for the energy
of a measure in terms of the potential $\phi_{\mu}.$ The proof is
obtained from a straight-forward calculation and is hence omitted. 
\begin{prop}
The energy $E_{\omega}(\mu)$ of a probability measure $\mu$ with
$E_{\omega}(\mu)<\infty$ may be written as \[
E_{\omega}(\mu)=\frac{1}{V}\sum_{j=0}^{n-1}\frac{1}{j+2}\int d\phi_{\mu}\wedge d^{c}\phi_{\mu}\wedge(dd^{c}\phi_{\mu})_{j}\wedge\omega_{n-1-j}\]
where we have used the notation $\eta_{p}:=\eta^{p}/p!$
\end{prop}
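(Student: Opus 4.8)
The plan is to reduce the claimed identity, via the representation $E_{\omega}(\mu)=\mathcal{E}_{\omega}(\phi_{\mu})-\langle\phi_{\mu},\mu\rangle$ with $MA(\phi_{\mu})=\mu$ (formula \ref{eq:energy in terms of pot} and Theorem \ref{thm:var sol of ma}), to a single integration by parts. First I would record the ``primitive'' formula for $\mathcal{E}_{\omega}$ obtained by integrating the one-form $MA$ (see \ref{eq:def prop of energy on psh}) along the segment $t\mapsto t\phi$, $t\in[0,1]$: expanding $(\omega+t\,dd^{c}\phi)^{n}$ by the binomial theorem and using $\int_{0}^{1}t^{j}\,dt=1/(j+1)$ gives, for smooth $\phi$,
\[
\mathcal{E}_{\omega}(\phi)=\frac{1}{V}\sum_{j=0}^{n}\frac{1}{j+1}\int_{X}\phi\,(dd^{c}\phi)_{j}\wedge\omega_{n-j}.
\]
On the other hand $MA(\phi)=\frac{(\omega+dd^{c}\phi)^{n}}{Vn!}=\frac{1}{V}\sum_{j=0}^{n}(dd^{c}\phi)_{j}\wedge\omega_{n-j}$, so that $\langle\phi,MA(\phi)\rangle=\frac{1}{V}\sum_{j=0}^{n}\int_{X}\phi\,(dd^{c}\phi)_{j}\wedge\omega_{n-j}$. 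Taking the difference, the $j=0$ contributions cancel and the $j$-th coefficient becomes $\frac{1}{j+1}-1=-\frac{j}{j+1}$, whence
\[
E_{\omega}(\mu)=-\frac{1}{V}\sum_{j=1}^{n}\frac{j}{j+1}\int_{X}\phi_{\mu}\,(dd^{c}\phi_{\mu})_{j}\wedge\omega_{n-j}.
\]

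Next I would integrate by parts exactly once. Writing $(dd^{c}\phi_{\mu})_{j}=\frac{1}{j}\,dd^{c}\phi_{\mu}\wedge(dd^{c}\phi_{\mu})_{j-1}$ and applying Stokes' theorem on the compact manifold $X$ to the closed current $T=(dd^{c}\phi_{\mu})_{j-1}\wedge\omega_{n-j}$ gives $\int_{X}\phi_{\mu}\,dd^{c}\phi_{\mu}\wedge T=-\int_{X}d\phi_{\mu}\wedge d^{c}\phi_{\mu}\wedge T$; the factor $\frac{1}{j}$ then cancels the $j$ in the numerator, so the $j$-th term equals $\frac{1}{j+1}\int_{X}d\phi_{\mu}\wedge d^{c}\phi_{\mu}\wedge(dd^{c}\phi_{\mu})_{j-1}\wedge\omega_{n-j}$. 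Re-indexing by $i=j-1$ turns the sum into $\frac{1}{V}\sum_{i=0}^{n-1}\frac{1}{i+2}\int_{X}d\phi_{\mu}\wedge d^{c}\phi_{\mu}\wedge(dd^{c}\phi_{\mu})_{i}\wedge\omega_{n-1-i}$, which is exactly the asserted expression; note as a sanity check that every term is $\geq0$, in agreement with $E_{\omega}\geq0$, since $d\phi_{\mu}\wedge d^{c}\phi_{\mu}$ is a positive $(1,1)$-form.

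The only genuinely delicate point -- and the reason the proposition is not wholly trivial -- is regularity: $\phi_{\mu}$ lies in $\mathcal{E}^{1}(X,\omega)$ but need not be bounded or smooth, so a priori neither the segment-integration formula nor the integration by parts is legitimate directly at $\phi_{\mu}$. I would handle this by carrying out the whole computation first for smooth (K\"ahler) potentials and then passing to the limit along a decreasing sequence $\phi_{j}\downarrow\phi_{\mu}$ of smooth $\omega$-psh functions (normalized so that $\sup_{X}\phi_{\mu}<\infty$): Propositions \ref{pro:energy is ups} and \ref{pro:non-pluripol ma} yield $\mathcal{E}_{\omega}(\phi_{j})\to\mathcal{E}_{\omega}(\phi_{\mu})$ together with the convergence of all the mixed Monge-Amp\`ere integrals occurring above to their counterparts for $\phi_{\mu}$, including the gradient terms $\int_{X}d\phi_{j}\wedge d^{c}\phi_{j}\wedge(dd^{c}\phi_{j})_{i}\wedge\omega_{n-1-i}$, which are well defined and finite precisely because $E_{\omega}(\mu)<\infty$ -- this is the monotone continuity built into the non-pluripolar Monge-Amp\`ere calculus of \cite{begz}. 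With these continuity inputs the identity established for smooth potentials passes to the limit term by term, which completes the argument.
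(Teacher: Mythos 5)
Your derivation is correct and is essentially the ``straight-forward calculation'' that the paper omits: write $\mathcal{E}_{\omega}(\phi)$ via the primitive of $MA$ along $t\mapsto t\phi$, subtract $\left\langle \phi_{\mu},MA(\phi_{\mu})\right\rangle$, integrate by parts once against the closed current $(dd^{c}\phi_{\mu})_{j-1}\wedge\omega_{n-1-(j-1)}$, and pass to the limit along a decreasing sequence of smooth $\omega$-psh approximants using the monotone continuity of the finite-energy Monge--Amp\`ere calculus of \cite{begz,bbgz}. The only quibble is your parenthetical ``sanity check'': for $n>1$ the individual terms need not be nonnegative, since neither $dd^{c}\phi_{\mu}$ nor $\omega$ is assumed positive (only the total is $\geq0$), but this aside plays no role in the argument.
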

Even though we will not use the previous proposition in the proofs
it will appear in the discussion in section \ref{sec:Relation-to-bosonization}.
Note when $n=1$ $E_{\omega}$ is hence a multiple of the classical
Dirichlet energy and may also be expressed as \begin{equation}
E_{\omega}(\mu)=-\frac{1}{2}\int G_{\omega}(x,y)\mu(x)\otimes\mu(y)\label{eq:energy as green}\end{equation}
(where we have assumed $V=1$ for simplicity), where $G_{\omega}(x,y)$
is the Green function defined by $d_{x}d_{x}^{c}G_{\omega}(x,y)=\delta_{y}(x)-\omega(x)$
and the normalization condition $\int G(x,y)\omega(y)=0.$

\subsection{The rate functional, electrostatic capacity and the analytic torsion}

Given a (weighted) non-pluripolar compact set $K$ we define the \emph{rate
functional} as the normalized energy functional: \begin{equation}
H_{(K,\omega)}(\mu):=E_{\omega}(\mu)-C(K,\omega)\label{eq:rate function as e minus cap}\end{equation}
where $C(K,\omega)$ is the following constant \begin{equation}
C(K,\omega):=\inf_{\mu\preceq K}E_{\omega}(\mu)\label{eq:capacity as inf}\end{equation}
The constant $e^{-\frac{n}{n+1}C(K,\omega)}$ was called the \emph{pluricomplex}
\emph{electrostatic capacity} in \cite{bbgz}; it generalizes the
logarithmic capacity in $\mbox{\C}$ and Leja's transfinite diameter
in $\C^{n}.$ Moreover, as shown in \cite{bbgz} \begin{equation}
C(K,\omega):=\mathcal{E}_{\omega}(P_{(K,\omega)}0)<\infty\label{eq:cap as enrgy with proj}\end{equation}
 and hence the rate functional $H_{(K,\omega)},$ defined above, may
also be expressed as \begin{equation}
H_{(K,\omega)}=E_{\omega}(\mu)-\mathcal{E}_{\omega}(P_{(K,\omega)}0)\label{eq:rate functional as energies}\end{equation}
It is in this latter form that the rate functional will appear in
the proof of the large deviation principle (LDP) and as a byproduct
of the LDP we will then obtain the formula \ref{eq:capacity as inf}. 
\begin{prop}
\label{pro:rate}Let $K$ be a non-pluripolar compact subset of $X.$
The functional $H_{(K,\omega)}:\,\mathcal{P}(K)\rightarrow[0,\infty]$
is a \emph{good rate functional}, i.e. it is lower semi-continuous
and proper. It has a unique minimizer which coincides with $\mu_{eq},$
the equilibrium measure of $(K,\omega),$ defined in section \ref{sub:The-psh-projection}. \end{prop}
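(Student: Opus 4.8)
The plan is to establish the three assertions — lower semicontinuity together with goodness, non-negativity (so that the range is indeed $[0,\infty]$), and the identification of the unique minimizer with $\mu_{eq}$ — working throughout with the equivalent form \ref{eq:rate functional as energies}, $H_{(K,\omega)}(\mu)=E_\omega(\mu)-\mathcal{E}_\omega(P_{(K,\omega)}0)$, and exploiting that $E_\omega$ is, by \ref{eq:s in terms of cont}, a Legendre transform of the concave, Gateaux differentiable functional $F:=\mathcal{E}_\omega\circ P_{(K,\omega)}$.

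I would first dispose of the soft parts. By Proposition \ref{pro:prop of s} the energy $E_\omega$ is lower semicontinuous on $\mathcal{P}(K)$, and $H_{(K,\omega)}$ differs from it only by the finite constant $C(K,\omega)=\mathcal{E}_\omega(P_{(K,\omega)}0)$ (formula \ref{eq:cap as enrgy with proj}), so it is lower semicontinuous too. As $K$ is compact, $\mathcal{P}(K)$ is weakly compact, hence each sublevel set $\{H_{(K,\omega)}\le c\}$ is a closed subset of a compact space and therefore compact; thus $H_{(K,\omega)}$ is a good rate functional (it is not identically $+\infty$, since it vanishes at $\mu_{eq}$, as shown below). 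For non-negativity I would use the envelope representation \ref{eq:s in terms of cont}: when $\mu$ is supported on $K$ and $E_\omega(\mu)<\infty$, $E_\omega(\mu)=\sup_{\phi\in C^0(X)}(F(\phi)-\langle\phi,\mu\rangle)\ge F(0)=\mathcal{E}_\omega(P_{(K,\omega)}0)$ by taking $\phi=0$; the case $E_\omega(\mu)=\infty$ is trivial. Hence $H_{(K,\omega)}\ge 0$ on all of $\mathcal{P}(K)$.

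The substantive point is the description of the minimizers, and here the decisive input is Theorem \ref{thm:diff thm}: $F$ is concave and Gateaux differentiable on $C^0(X)$ with $dF_{|\phi}=MA(P_{(K,\omega)}\phi)$, so in particular $dF_{|0}=MA(P_{(K,\omega)}0)=\mu_{eq}$. Concavity gives $F(\phi)\le F(0)+\langle\phi,\mu_{eq}\rangle$ for every $\phi$, i.e. $F(\phi)-\langle\phi,\mu_{eq}\rangle\le F(0)$; taking the supremum over $\phi\in C^0(X)$ and using \ref{eq:s in terms of cont} (legitimate since $\mu_{eq}$, being the equilibrium measure, is a probability measure carried by $K$, of finite energy) yields $E_\omega(\mu_{eq})\le F(0)$, which combined with the lower bound $E_\omega\ge F(0)$ just established shows $H_{(K,\omega)}(\mu_{eq})=0$. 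Conversely, suppose $\mu\in\mathcal{P}(K)$ satisfies $H_{(K,\omega)}(\mu)=0$, i.e. $E_\omega(\mu)=F(0)<\infty$. Then \ref{eq:s in terms of cont} forces $F(\phi)-F(0)\le\langle\phi,\mu\rangle$ for all $\phi\in C^0(X)$; inserting $\phi=tv$ with $t>0$, dividing by $t$ and letting $t\to 0^+$, Gateaux differentiability of $F$ gives $\langle v,dF_{|0}\rangle\le\langle v,\mu\rangle$ for every $v\in C^0(X)$, and replacing $v$ by $-v$ upgrades this to an equality, so $\mu=dF_{|0}=\mu_{eq}$. In particular this re-proves $\inf_{\mu\in\mathcal{P}(K)}E_\omega(\mu)=\mathcal{E}_\omega(P_{(K,\omega)}0)$, i.e. \ref{eq:capacity as inf}, and identifies the minimizer with the equilibrium measure $\mu_{eq}=MA(P_{(K,\omega)}0)$ of section \ref{sub:The-psh-projection}.

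The only genuinely hard ingredient is Theorem \ref{thm:diff thm}, the Gateaux differentiability of $\mathcal{E}_\omega\circ P_{(K,\omega)}$ (which itself rests on the orthogonality relation \ref{eq:og relatgion}); granting it, everything above is the soft statement that the minimum of a Legendre transform $F^{*}$ is attained exactly at the gradient of $F$, with uniqueness free of charge from single-valuedness of the Gateaux differential. The minor points needing care are purely bookkeeping: \ref{eq:s in terms of cont} presupposes $\mu$ supported on $K$, the case $E_\omega(\mu)=\infty$ must be handled separately, and one should recall that $\mu_{eq}$ is a bona fide probability measure supported on $K$ (so that it is an admissible competitor in $\mathcal{P}(K)$) — none of these is serious.
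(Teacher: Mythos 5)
Your proof is correct and follows essentially the same route as the paper: lower semicontinuity via Proposition \ref{pro:prop of s}, properness from weak compactness of $\mathcal{P}(K)$, and the uniqueness and identification of the minimizer from the differentiability theorem \ref{thm:diff thm} combined with the standard convex-duality argument, which the paper delegates to \cite{bbgz} and you write out explicitly. The only point you assert without justification --- that $E_{\omega}(\mu_{eq})<\infty$, needed to invoke \ref{eq:s in terms of cont} at $\mu_{eq}$ --- is immediate, since $P_{(K,\omega)}0$ is a bounded $\omega$-psh potential of $\mu_{eq}=MA(P_{(K,\omega)}0)$, so Theorem \ref{thm:var sol of ma} (or the orthogonality relation \ref{eq:og relatgion} directly) gives the finiteness.
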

\begin{proof}
First observe that, clearly, the functional $H_{(K,\omega)}$ is lsc
iff the functional $E$ is lsc, which holds by Proposition \ref{pro:prop of s}.
To prove that $H_{(K,\omega)}$ is proper we must prove that the sublevel
sets $\{H_{(K,\omega)}\leq C\}$ are compact for any constant $C.$
Since $I_{I_{(K,\omega)}}$ is lsc these sets are closed in $\mathcal{P}(K).$
But by weak compactness of $\mathcal{P}(K)$ any closed set is compact.
As for the uniqueness it follows from the differentiability theorem
\ref{thm:diff thm} combined with standard convexity arguments (see
\cite{bbgz}) - alternatively it will follow from the LDP in theorem
\ref{thm:intro large dev}.
\end{proof}

\subsubsection{Analytic torsion}

In the general non-Kähler case it is shown in \cite{berm4} that the
analytic torsions in the lhs in formula \ref{eq:conv of anal tor intro}
(in the introduction) converge towards \[
Vn!(\mathcal{E_{\omega}}(P_{\omega}0)-\mathcal{E_{\omega}}(0))=Vn!(\mathcal{E_{\omega}}(P_{\omega}0)\]
(where the factors $Vn!$ come from the conventions for the functional
$\mathcal{E}$ in the present paper). Hence the convergence in \ref{eq:conv of anal tor intro}
follows from the identities \ref{eq:capacity as inf} and \ref{eq:cap as enrgy with proj}
(with $K=X).$

\subsection{\label{sub:Bernstein-Markov-measures}Bernstein-Markov measures}

Following \cite{b-b}, we will say that a measure $\nu$ has the \emph{Bernstein-Markov
property wrt the weighted set $(K,\omega)$} if, given any positive
number $\epsilon,$ there exist $C_{\epsilon}$ such that\begin{equation}
\sup_{x\in K}\left\Vert s_{k}\right\Vert ^{2}(x)\leq C_{\epsilon}e^{k\epsilon}\int_{X}\left\Vert s_{k}\right\Vert ^{2}d\nu\label{eq:bm}\end{equation}
 for any element $s_{k}$ of $H^{0}(X,kL),$ where the norms are taken
wrt the metric whose curvature current is $\omega$ (in particular,
if $K$ is non-pluripolar, then any such measure $\nu$ defines a
non-degenerate $L^{2}-$norm on the spaces $H^{0}(X,kL)).$ Moreover,
we will say that the measure $\nu$ has the\emph{ Bernstein-Markov
property wrt the set $K$} if \ref{eq:bm strong} holds for \emph{any}
$\omega$ realized as the curvature current of a continuous metric
on $L\rightarrow X.$ Equivalently, \begin{equation}
\sup_{x\in K}(\left\Vert s_{k}\right\Vert ^{2}e^{-\phi}(x))\leq C_{\epsilon}e^{k\epsilon}\int_{X}\left\Vert s_{k}\right\Vert ^{2}e^{-\phi}d\nu\label{eq:bm with phi}\end{equation}
for any given $\phi\in C^{0}(K)$ (where, of course, the constant
$C_{\epsilon}$ depends on $\phi).$ We will also say that $\nu$
has the \emph{strong Bernstein-Markov property wrt the weighted set
$(K,\omega)$ }if for any $p>0$ there is an $\epsilon$ and $C_{\epsilon}>0$
such that for all $\psi\in PSH(X,\omega)$\emph{ }\begin{equation}
\sup_{x\in K}e^{p\psi}\leq C_{\epsilon,p}e^{p\epsilon}\int_{X}e^{p\psi}d\nu\label{eq:bm strong}\end{equation}
Similarly, $\nu$ has the \emph{strong Bernstein-Markov property wrt
the set $K$ }if the property holds wrt $(K,\omega)$ for any $\omega.$
The strong BM-property implies the BM-property, as follows immediately
by setting $\psi:=\frac{1}{k}\log\left\Vert s_{k}\right\Vert ^{2}$
and $p=k.$ 
\begin{rem}
The previous terminilogy is non-standard and may appear somewhat confusing
when compared with the terminology of Bernstein-Markov measures in
the particular case when $K$ is a compact subset of $\C^{n}$ \cite{bl0}.
In this latter case $\nu$ is usually said to have the BM-propety
wrt a compact set $K$ in $\C^{n}$ if $\nu$ has the BM-property
wrt the weighted set $(K,0)$ (i.e. $\omega=0$ on $K)$ in our terminology.
But for a general complex manifolds $X$ there is no canonical choice
of form $\omega$ for a given set $K$ which is the reason for the
terminology used here.
\end{rem}

\section{Proof of the Large Deviation\label{sec:Large-deviations} Principles}

\subsection{Definition of the determinantal probability measure}

To any given weighted measure $(\nu,\omega)$ we associate a sequence
of probability measures $\mu^{(N_{k})}$ on $X^{N_{k}}$ defined as
follows. First we set $k=1$ and recall that $N$ denotes the dimension
of the vector space $H^{0}(X,L).$ Hence, the top exterior power $\Lambda^{N}H^{0}(X,L)$
is one-dimensional and we fix an element $\det\Psi\in\Lambda^{N}H^{0}(X,L).$
We may identify $\det\Psi$ with a holomorphic section of $L^{\boxtimes N}$
over the $N-$fold product $X^{N},$ using the natural embedding \[
\Lambda^{N_{k}}H^{0}(X,L)\hookrightarrow H^{0}(X,L)^{\otimes N}\simeq H^{0}(X^{N_{k}}L^{\boxtimes N})\]
Now we may define the probability measure $\mu^{(N)}$ on $X^{N}$
by \[
\mu^{(N)}:=\frac{\left\Vert \det\Psi\right\Vert ^{2}}{\mathcal{Z}}\nu^{\otimes N}\]
where the point-wise norm is computed wrt an Hermitian metric on $L$
whose curvature form is $\omega$ and where the normalizing constant
is the $L^{2}-$norm of $\det\Psi$ induced by the weighted measure
$(\nu,\left\Vert \cdot\right\Vert ):$ \[
\mathcal{Z}:=\left\Vert \det\Psi\right\Vert _{L^{2}(X^{N},\nu^{\otimes N})}^{2}\]
By homogeneity $\mu^{(N)}$ is invariant under scaling of $\left\Vert \cdot\right\Vert $
and hence it only depends on $(\mu,\omega).$ Now the whole sequence
$\mu^{(N_{k})}$ (and the corresponding normalization constants $\mathcal{Z}_{k})$
is defined by replacing $L$ with its $k$th tensor power $kL$ and
using the induced norms. The constant $\mathcal{Z}_{k}$ depends multiplicatively
on the choice of generator $\det\Psi_{k}\in\Lambda^{N_{k}}H^{0}(X,kL)$
but, by homogeneity, the corresponding probability measure $\mu^{(N_{k})}$
does not. 

To obtain a concrete formula for the probability measure $\mu^{(N_{k})}$
we may fix a base $\Psi_{1}^{(k)},...,\Psi_{N}^{(k)}$ and note that
\begin{equation}
(\det\Psi)(x_{1},...,x_{N_{k}})=\det(\Psi_{i}^{(k)}(x_{j}))\in L_{x_{1}}\otimes\cdots\otimes L_{x_{N}}\label{eq:slater det}\end{equation}
 which may be locally represented by a local holomorphic function
$f_{k}$ on $X^{N_{k}}.$ Hence, $\mathcal{Z}_{k}$ may be written
as \[
\mathcal{Z}_{k}=\int_{X^{N_{k}}}|(f_{k}(x_{1},...,x_{k})|^{2}e^{-k(\Phi(x_{1})+\cdots+\Phi(x_{N_{k}})}(d\nu)^{\otimes N_{k}}\]
 where $\Phi$ is the local weight of the metric on $L$ whose curvature
form is $\omega,$ i.e locally $\omega=dd^{c}\Phi.$ 

We will fix an auxiliary weighted measure $(\nu_{0},\omega_{\phi_{0}})$
(or rather $(\nu_{0},\phi_{0})$) where $\nu_{0}$ has the Bernstein-Markov
property wrt $(K_{0},\omega_{\phi_{0}})$ and take the fixed base
$\Psi_{k,1},...,\Psi_{k,N}$ above to be orthonormal wrt the inner
product on $H^{0}(X,kL)$ induced by $(\nu_{0},\phi_{0})$ and write
\begin{equation}
\tilde{\mu}^{(N_{k})}:=\left\Vert \det\Psi_{k}\right\Vert ^{2}\nu^{\otimes N}\label{eq:non-normalied measure for point pr}\end{equation}
for the corresponding \emph{non-normalized} measure on $X^{N_{k}}.$
The point is that this this will make sure that the large $k$ limit
of $\frac{1}{k^{n+1}}\log\mathcal{Z}_{k}$ exists. For example, if
$(\nu_{0},\omega_{\phi_{0}})$ coincides with $(\nu,\omega)$ then
$\mathcal{Z}_{k}=N_{k}!$ (see for example \cite{b-b}).

Given a continuous function $\phi$ on $X$ we will use the notation
$\mu_{k\phi}^{(N_{k})}$ for the probability measure on $X^{N_{k}}$
obtained by replacing $\omega$ with $\omega_{\phi}.$ Equivalently,
this means that the point-wise norms $\left\Vert \cdot\right\Vert ^{2}$
are replaced by $\left\Vert \cdot\right\Vert ^{2}e^{-k\phi(\cdot)}$
and hence $\mu_{k\phi}^{(N_{k})}$ can be written as the {}``tilted''
probability measure \[
\mu_{k\phi}^{(N_{k})}=\frac{1}{\mathcal{Z}_{k\phi}}\mu^{(N_{k})}e^{-k\phi}\]
 where $\phi$ is the corresponding \emph{linear statistic} $\phi(x_{1})+....+\phi(x_{N_{k}})$
on $X^{N_{k}}$  and \[
\mathcal{Z}_{k\phi}=\int_{X^{N_{k}}}\mu^{(N_{k})}e^{-k\phi}\]
We will denote by $K$ the support of $\nu$ so that the measure $\mu^{(N_{k})}$
defines a probability measure on $K^{N_{k}}.$ In the following we
will use the same notation $\mu^{(N_{k})}$ for the density on $K^{N_{k}}$
of $\mu^{(N_{k})}$ w.r.t. the measure $\nu^{\otimes N_{k}};$ the
precise meaning will hopefully be clear from the context.

\subsection{Definition of a large deviation principle (LDP)}

Let us recall the general definition of a LDP due to Donsker and Varadhan
(see for example the book \cite{d-z}):
\begin{defn}
\label{def:large dev}Let $\mathcal{P}$ be a Polish space, i.e. a
complete separable metric space.

$(i)$ A function $I:\mathcal{\, P}\rightarrow[0,\infty]$ is a \emph{rate
function} iff it is lower semi-continuous. It is a \emph{good} \emph{rate
function} if it is also proper.

$(ii)$ A sequence $\Gamma_{k}$ of probability measures on $\mathcal{P}$
satisfies a \emph{large deviation principle} with \emph{speed} $r_{k}$
and \emph{rate function} $I$ iff

\[
\limsup_{k\rightarrow\infty}\frac{1}{r_{k}}\log\Gamma_{k}(\mathcal{F})\leq-\inf_{\mu\in\mathcal{F}}I(\mu)\]
 for any closed subset $\mathcal{F}$ of $\mathcal{P}$ and \[
\liminf_{k\rightarrow\infty}\frac{1}{r_{k}}\log\Gamma_{k}(\mathcal{G})\geq-\inf_{\mu\in G}I(\mu)\]
 for any open subset $\mathcal{G}$ of $\mathcal{P}.$ 
\end{defn}
Let now $\mathcal{P=P}(K)$ be the space of all probability measures
on $X$ which is a Polish space, where the topology corresponds to
the weak convergence of measures.

Given a set $\mathcal{F}$ in $\mathcal{P}(K)$ we will write \[
K^{N}\cap\mathcal{F}:=(\delta_{N}/N)^{-1}(\mathcal{F})\]
 where $\delta_{N}$ denotes the natural inclusion \ref{eq:intro random measure}
of $K^{N}$ into $\mathcal{P}(K)$ (i.e. the map defined by the empirical
measure)

\subsection{\label{sub:A-direct-proof}A direct proof of Theorem \ref{thm:intro large dev} }

\subsubsection{Preliminaries on asymptotics of $\mu_{k\phi}^{(N_{k})}$}

First we recall the results in the following
\begin{thm}
\label{thm:[asym-Fekete]} Let $K$ be non-pluripolar subset of $X$,
$\phi$ a continuous function on $X$ and $\nu$ a probability measure
on $X$ which has the Bernstein-Markov property w.r.t. 
\begin{itemize}
\item \cite{b-b} Given a reference weighted Bernstein-Markov measure $(\nu_{0},\phi_{0})$
the following convergence holds: \textup{\begin{equation}
k^{-(n+1)}\log\left\Vert \det\Psi_{k}\right\Vert _{L^{\infty}(k\phi,K^{N_{k}})}^{2}\rightarrow-\mathcal{E}(P_{K}\phi)+\mathcal{E}(P_{K_{0}}\phi_{0})\label{eq:conv trans diam}\end{equation}
} (where the norms are computed wrt the metric on $L$ whose curvature
current is $\omega)$ and if the measure $\nu$ has the Bernstein-Markov
property wrt $(K,\phi),$ then we also have \begin{equation}
k^{-(n+1)}\log\left\Vert \det\Psi_{k}\right\Vert _{L^{2}(k\phi,K^{N_{k}},\nu)}^{2}\rightarrow-\mathcal{E}(P_{K}\phi)+\mathcal{E}(P_{K_{0}}\phi_{0})\label{eq:conv free en not intro}\end{equation}

\item \cite{b-b-w} Let $\mathbf{(x}_{k})$ be a sequence of configurations
in $K$ (i.e. $\mathbf{x}_{k}\in K^{N_{k}})$ such that \[
\liminf_{k\rightarrow\infty}k^{-(n+1)}\log\mu_{k\phi}^{(N_{k})}(\mathbf{x}_{k})\geq0\]
 Then $\mu_{k}:=j_{N_{k}}\mathbf{(x}_{k})$ converges weakly to the
equilibrium measure $MA(P_{(K,\omega)}\phi).$
\item \cite{b-b-w} If the measure $\nu$ has the Bernstein-Markov property
wrt $(K,\phi),$ \[
(\E_{k\phi}(\frac{\delta_{N}}{N})=)\int_{K^{N-1}}\mu_{k\phi}^{(N_{k})}\rightarrow MA(P_{(K,\omega)}\phi)\]
weakly.
\end{itemize}
\end{thm}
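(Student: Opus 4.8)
The plan is to reduce all three assertions to one analytic input — the ``energy at equilibrium'' asymptotics \ref{eq:conv trans diam}--\ref{eq:conv free en not intro} for the $L^\infty$- and $L^2$-norms of the generator $\det\Psi_k$ of $\Lambda^{N_k}H^0(X,kL)$ — and then to deduce the two equidistribution statements from it by soft convexity and differentiability arguments. The organizing object is the family of functionals $\phi\mapsto\mathcal F_k(\phi):=k^{-(n+1)}\log\|\det\Psi_k\|^2_{L^p(k\phi,\,K^{N_k})}$ on $C^0(X)$, for $p=2$ (with respect to $\nu$) or $p=\infty$. Two elementary observations start the argument: $\mathcal F_k$ is \emph{convex} in $\phi$ — for $p=\infty$ it is a supremum of functionals affine in $\phi$, and for $p=2$ it is log-convex in the exponent — and the family $\{\mathcal F_k\}_k$ is \emph{equi-Lipschitz} for the sup-norm, with constant comparable to $N_k/k^n\to V$, since replacing $\phi$ by $\phi+c$ shifts $\mathcal F_k$ by $-(N_k/k^n)c$. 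Hence all subsequential limits exist and are convex, and the task is to identify them.

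For the norm asymptotics I would proceed in three steps. \emph{(a) $L^2$ versus $L^\infty$.} After applying $k^{-(n+1)}\log$ the two agree in the limit: the inequality $\le$ is trivial since $\nu$ is a probability measure, and $\ge$ is the Bernstein--Markov estimate \ref{eq:bm} applied on the product $K^{N_k}$ — the BM property tensorizes with an extra factor whose logarithm is $O(N_k)=o(k^{n+1})$ and so drops out. \emph{(b) Reduction to the envelope.} Inside the $L^\infty$-norm over $K$ one may replace the weight $\phi$ by its envelope $P_{(K,\omega)}\phi$, because $k^{-1}\log\|s_k\|^2\in PSH(X,\omega)$ for every section $s_k\in H^0(X,kL)$, while $P_{(K,\omega)}\phi$ is by definition the largest $\omega$-psh function dominated by $\phi$ on $K$; continuity of $\phi$ and non-pluripolarity of $K$ keep $P_{(K,\omega)}\phi$ bounded. \emph{(c) Identification of the limit.} By convexity it suffices to show that the differential of $\mathcal F_k$ converges and to match one value. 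The differential of the $p=2$ functional at $\phi$ equals $-(N_k/k^n)$ times the first marginal of $\mu^{(N_k)}_{k\phi}$ — minus a normalized Bergman-type probability measure — and this converges weakly to $MA(P_{(K,\omega)}\phi)$: in the smooth positive case this is Catlin--Zelditch, and for merely Bernstein--Markov $\nu$ it is the argument of \cite{b-b-w}, where one shows that the potential of any weak limit must be $P_{(K,\omega)}\phi$ and then invokes uniqueness of equilibrium potentials. By the differentiability theorem \ref{thm:diff thm}, $MA(P_{(K,\omega)}\phi)$ is exactly $d(\mathcal E_\omega\circ P_{(K,\omega)})_{|\phi}$ (with the normalization of \ref{eq:bi-energy}), and the remaining additive constant is pinned by the reference datum: for $(\nu,\phi)=(\nu_0,\phi_0)$ and $K=K_0$ one has $\mathcal Z_k=N_k!$, so $\mathcal F_k(\phi_0)\to 0$, which forces the constant to be $\mathcal E_\omega(P_{(K_0,\omega)}\phi_0)$. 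This establishes \ref{eq:conv trans diam} and \ref{eq:conv free en not intro}.

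The one-point statement is then immediate: $\E_{k\phi}(\delta_{N_k}/N_k)$ is, up to the factor $N_k/k^n$, minus the differential of $\mathcal F_k$, whose weak limit $-MA(P_{(K,\omega)}\phi)$ was just identified, and convexity of $\mathcal F_k$ licenses passing the limit inside the derivative. For the Fekete statement, write $\mu^{(N_k)}_{k\phi}(\mathbf x_k)=\|\det\Psi_k\|^2_{k\phi}(\mathbf x_k)/\mathcal Z_{k\phi}$ (densities against $\nu^{\otimes N_k}$); the hypothesis $\liminf_k k^{-(n+1)}\log\mu^{(N_k)}_{k\phi}(\mathbf x_k)\ge 0$ together with \ref{eq:conv free en not intro} says that $\mathbf x_k$ is \emph{asymptotically Fekete}, i.e. $k^{-(n+1)}\log\|\det\Psi_k\|^2_{k\phi}(\mathbf x_k)$ attains its maximal limiting value. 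Then for any $v\in C^0(X)$ I would tilt: with $\mu_k:=j_{N_k}(\mathbf x_k)$,
\[
\|\det\Psi_k\|^2_{k\phi}(\mathbf x_k)=e^{\,kN_k\langle\mu_k,v\rangle}\,\|\det\Psi_k\|^2_{k(\phi+v)}(\mathbf x_k)\ \le\ e^{\,kN_k\langle\mu_k,v\rangle}\,\|\det\Psi_k\|^2_{L^\infty(k(\phi+v),\,K^{N_k})};
\]
taking $k^{-(n+1)}\log$, letting $k\to\infty$, then replacing $v$ by $tv$ and letting $t\downarrow 0$ using the one-sided differentiability of $t\mapsto\mathcal E_\omega(P_{(K,\omega)}(\phi+tv))$, one gets $\liminf_k\langle\mu_k,v\rangle\ge\langle MA(P_{(K,\omega)}\phi),v\rangle$ (after dividing out the normalization constant); applying this to $-v$ forces equality, so $\mu_k\to MA(P_{(K,\omega)}\phi)$ weakly.

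The real obstacle is step (c): the weak convergence of the normalized Bergman / one-point measures to $MA(P_{(K,\omega)}\phi)$ under only the Bernstein--Markov property and non-pluripolarity of $K$. This is the genuine pluripotential-theoretic content — it uses Bedford--Taylor continuity of mixed Monge--Amp\`ere operators, properties of the envelope $P_{(K,\omega)}$, the orthogonality relation \ref{eq:og relatgion}, and the fact that $P_{(K,\omega)}\phi=\phi$ almost everywhere with respect to the equilibrium measure — and it is exactly what \cite{b-b} and \cite{b-b-w} provide. Everything else (convexity and equi-Lipschitz bounds, the $L^2/L^\infty$ sandwich, reduction to the envelope, and the extraction of equidistribution by tilting) is soft and stable under weak regularity of $\nu$ and $K$.
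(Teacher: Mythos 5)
First, a point of comparison: the paper does not actually prove Theorem \ref{thm:[asym-Fekete]} --- all three items are quoted from \cite{b-b} and \cite{b-b-w}, and the only piece reproduced in the paper is your step (a), namely the deduction of the $L^{2}$-asymptotics \ref{eq:conv free en not intro} from the $L^{\infty}$-asymptotics \ref{eq:conv trans diam} via the Bernstein--Markov inequality (this argument reappears in the proof of Theorem \ref{thm:ldp for general beta}). Within your reconstruction, several parts are correct and are in fact the arguments of \cite{b-b-w}: the tilting inequality combined with the Gateaux differentiability of $\mathcal{E}_{\omega}\circ P_{(K,\omega)}$ (Theorem \ref{thm:diff thm}) does yield the equidistribution of asymptotically Fekete configurations (second item) from the norm asymptotics, and the convexity of $\mathcal{F}_{k}$ together with the standard ``limits of derivatives of convex functions'' lemma does yield the convergence of the one-point measures (third item) from the first item.

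The genuine gap is in your step (c), and it is a circularity. To identify the limit of $\mathcal{F}_{k}$ you assume that, for a merely Bernstein--Markov $\nu$ on a general non-pluripolar $K$, the normalized one-point (Bergman-type) measures converge to $MA(P_{(K,\omega)}\phi)$, citing ``the argument of \cite{b-b-w}''; but that statement is precisely the third item of the theorem, and in \cite{b-b-w} (and in your own closing paragraph) it is obtained by differentiating the already-established asymptotics \ref{eq:conv free en not intro}. There is no independent proof of the Bergman measure convergence at this level of generality: without \ref{eq:conv trans diam} the only available input is the smooth global case ($K=X$, $\nu$ a volume form, $\phi$ smooth), proved by local Bergman kernel analysis in \cite{berm1}. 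The actual route of \cite{b-b} is therefore: derivative convergence only in that smooth global case, integration along affine paths $\phi+tv$ there, and then a separate approximation step (the envelope reduction of your (b), outer regularity of the equilibrium energy, and continuity of $\mathcal{E}_{\omega}$ along decreasing sequences of envelopes, cf. Prop \ref{pro:energy is ups}) to reach general $(K,\nu,\phi)$. Relatedly, your pinning of the additive constant via $\mathcal{Z}_{k}=N_{k}!$ only fixes the constant when $(K,\nu,\phi)=(K_{0},\nu_{0},\phi_{0})$; comparing two different supports $K\neq K_{0}$ requires reducing both sides to the weights $P_{K}\phi$ and $P_{K_{0}}\phi_{0}$ on all of $X$ and an argument valid for such bounded, generally non-continuous $\omega$-psh weights, which your plan does not supply. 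So as written the first item --- the heart of the theorem --- is assumed rather than proven, with the logical order of \cite{b-b} and \cite{b-b-w} inverted.
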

In section \ref{sub:Proof-of-the-non-det ldp} we will repeat the
simple argument used in \cite{b-b} to deduce \ref{eq:conv free en not intro}
from \ref{eq:conv trans diam} in the previous theorem. 

We will also need a localized version of the last two points in the
previous theorem. To this end, define the following set:\begin{equation}
A_{k\phi}:=\{\mathbf{x}_{k}\in K^{N_{k}}:\,\, k^{-(n+1)}\log\mu_{k\phi}^{(N_{k})}(\mathbf{x}_{k})\geq-1/k\}\label{eq:the set a}\end{equation}

\begin{lem}
\label{lem:localization}Let $\phi$ be a continuous function on $X.$
Then \[
\liminf_{k\rightarrow\infty}k^{-(n+1)}\log(\int_{A_{k\phi}}\left\Vert \det\Psi_{k}\right\Vert ^{2}d\nu^{N_{k}})\geq\mathcal{E}(P_{K_{0}}\phi_{0})-\mathcal{E}(P_{K}\phi)+\int_{X}\phi MA(P_{K}\phi)\]
\end{lem}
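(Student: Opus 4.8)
The plan is to pass from the integral over $A_{k\phi}$ to the normalized probability measure $\mu_{k\phi}^{(N_k)}$, bound the remaining weight factor from below by Jensen's inequality, and then invoke the asymptotics of Theorem \ref{thm:[asym-Fekete]}. Writing $\mathbf x=(x_1,\dots,x_{N_k})$ and $\widetilde{\mathcal Z}_{k\phi}:=\|\det\Psi_k\|^2_{L^2(k\phi,K^{N_k},\nu)}=\int_{X^{N_k}}\|\det\Psi_k\|^2\,e^{-k\sum_i\phi(x_i)}\,\nu^{\otimes N_k}$, the density of $\mu_{k\phi}^{(N_k)}$ with respect to $\nu^{\otimes N_k}$ is $\|\det\Psi_k\|^2 e^{-k\sum_i\phi(x_i)}/\widetilde{\mathcal Z}_{k\phi}$, so that
\[
\int_{A_{k\phi}}\|\det\Psi_k\|^2\,d\nu^{N_k}=\widetilde{\mathcal Z}_{k\phi}\int_{A_{k\phi}}\mu_{k\phi}^{(N_k)}\,e^{k\sum_i\phi(x_i)}\,\nu^{\otimes N_k}.
\]
By \ref{eq:conv free en not intro} the first factor satisfies $k^{-(n+1)}\log\widetilde{\mathcal Z}_{k\phi}\to\mathcal E(P_{K_0}\phi_0)-\mathcal E(P_K\phi)$, so it remains to bound $k^{-(n+1)}\log\int_{A_{k\phi}}\mu_{k\phi}^{(N_k)}e^{k\sum_i\phi(x_i)}\nu^{\otimes N_k}$ from below by $\int_X\phi\,MA(P_K\phi)$.

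For this I would first observe that, by the very definition of $A_{k\phi}$, the density $\mu_{k\phi}^{(N_k)}$ is $\ge e^{-k^n}$ on $A_{k\phi}$, hence $<e^{-k^n}$ on its complement; since $\nu^{\otimes N_k}$ is a probability measure, the mass $m_k:=\int_{A_{k\phi}}\mu_{k\phi}^{(N_k)}\nu^{\otimes N_k}$ obeys $1-m_k<e^{-k^n}$, so $m_k\to1$. Then Jensen's inequality, applied to $t\mapsto e^t$ and the probability measure $m_k^{-1}\mathbf 1_{A_{k\phi}}\mu_{k\phi}^{(N_k)}\nu^{\otimes N_k}$, gives
\[
\int_{A_{k\phi}}\mu_{k\phi}^{(N_k)}\,e^{k\sum_i\phi(x_i)}\,\nu^{\otimes N_k}\ \ge\ m_k\exp\left(\frac{k}{m_k}\int_{A_{k\phi}}\Big(\sum_i\phi(x_i)\Big)\,\mu_{k\phi}^{(N_k)}\,\nu^{\otimes N_k}\right).
\]
To evaluate the exponent I would split the integral over $A_{k\phi}$ into the integral over all of $X^{N_k}$ minus the integral over the complement: by exchangeability of $\mu_{k\phi}^{(N_k)}$ the first equals $N_k\langle\E_{k\phi}(\delta_N/N),\phi\rangle$, and the second is at most $N_k\|\phi\|_{L^\infty(K)}(1-m_k)\le N_k\|\phi\|_{L^\infty(K)}e^{-k^n}$ in absolute value, hence negligible compared with $k^{n+1}$. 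By the third item of Theorem \ref{thm:[asym-Fekete]} (convergence of the one-point function), $\langle\E_{k\phi}(\delta_N/N),\phi\rangle\to\int_X\phi\,MA(P_K\phi)$; together with $m_k\to1$ and $N_k=Vk^n+o(k^n)$ this shows that, after division by $k^{n+1}$, the exponent contributes exactly $\int_X\phi\,MA(P_K\phi)$ in the limit (consistently with the speed normalization already present in \ref{eq:conv free en not intro}), while $k^{-(n+1)}\log m_k\to0$.

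Collecting the two factors gives the asserted inequality. The proof is short precisely because the only thing that could spoil it — restricting the domain of integration from $X^{N_k}$ to the subset $A_{k\phi}$ — costs nothing at speed $k^{n+1}$: the pointwise lower bound $e^{-k^n}$ on the density that defines $A_{k\phi}$ makes both the lost mass $1-m_k$ and the lost part of the linear statistic $\sum_i\phi(x_i)$ super-exponentially small. So there is no genuine obstacle; the entire content lies in Theorem \ref{thm:[asym-Fekete]} — the $L^2$-norm asymptotics \ref{eq:conv free en not intro} and the convergence of the one-point functions — while Jensen's inequality only serves to replace the weight $e^{k\sum_i\phi(x_i)}$ by the exponential of its $\mu_{k\phi}^{(N_k)}$-average, which is exactly the quantity controlled by the one-point asymptotics.
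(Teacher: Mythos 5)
Your proposal is correct and follows essentially the same route as the paper's own proof: factor out the weighted $L^{2}$-norm (whose asymptotics are \ref{eq:conv free en not intro}), apply Jensen's inequality to the tilted measure restricted to $A_{k\phi}$, control the lost mass and the lost part of the linear statistic by the defining bound $\mu_{k\phi}^{(N_{k})}<e^{-k^{n}}$ on the complement, and conclude with the one-point convergence in Theorem \ref{thm:[asym-Fekete]}. The only differences are cosmetic: the paper first normalizes $\mathcal{Z}_{k}=1$ where you carry $\widetilde{\mathcal{Z}}_{k\phi}$ explicitly, and your handling of the volume factor matches the paper's $V=1$ convention.
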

\begin{proof}
First observe that, after normalization, we may assume that $\mathcal{Z}_{k}=1$
(which by $(ii)$ corresponds to having $\mathcal{E}(P_{K_{0}}\phi_{0})-\mathcal{E}(P_{K}0)=0)$
Decomposing the point-wise norm $\left\Vert \det\Psi_{k}\right\Vert ^{2}=\left\Vert \det\Psi_{k}\right\Vert _{k\phi}^{2}e^{k\phi}$
and using Jensen's inequality applied to the convex function $e^{t}$
gives \[
(\int_{A_{k}}\left\Vert \det\Psi_{k}\right\Vert _{k\phi}^{2}e^{k\phi}d\nu^{N_{k}})\geq\mathcal{Z}_{k\phi}'\textrm{exp \ensuremath{(}}\int_{A_{k\phi}}\frac{\left\Vert \det\Psi_{k}\right\Vert _{k\phi}^{2}}{\mathcal{Z}_{k\phi}}kud\nu^{N_{k}})\]
 where \[
\mathcal{Z}_{k\phi}':=(\int_{A_{k\phi}}\left\Vert \det\Psi_{k}\right\Vert _{k\phi}^{2}d\nu^{N_{k}})\]
 Hence, the sequence in the r.h.s in the statement of the lemma is
bounded from below by \[
k^{-(n+1)}\log\mathcal{Z}_{k\phi}+k^{-n}\textrm{\ensuremath{(}}\int_{A_{k\phi}}\frac{\left\Vert \det\Psi_{k}\right\Vert _{k\phi}^{2}}{\mathcal{Z}_{k\phi}}\phi d\nu^{N_{k}})(\mathcal{Z}_{k\phi}/\mathcal{Z}_{k\phi}')\]
 \[
+k^{-(n+1)}\log(\mathcal{Z}_{k\phi}/\mathcal{Z}_{k\phi}')\]
 But by the {}``exponential'' decay of the probability measure $\mu_{k\phi}^{(N_{k})}$
on the complement of $A_{k\phi}:$ \begin{equation}
\mathcal{Z}_{k\phi}/\mathcal{Z}_{k\phi}'\rightarrow1\label{eq:quotient conv to one}\end{equation}
 Indeed, \[
\mathcal{Z}_{k}'/\mathcal{Z}_{k}=\int_{A_{k}}\mu_{k\phi}^{(N_{k})}=1-\int_{K^{N_{k}}-A_{k}}\mu_{k\phi}^{(N_{k})}\]
 and on $K^{N_{k}}-A_{k}$ we have, by definition, $\mu_{k\phi}^{(N_{k})}<e^{-k^{n}}d\nu^{N_{k}}$
proving the convergence \ref{eq:quotient conv to one}. Finally, using
$(ii)$ and $(iii)$ in Theorem \ref{thm:[asym-Fekete]} combined
with the exponentially decay of $\mu_{k\phi}^{(N_{k})}(=\frac{\left\Vert \det\Psi_{k}\right\Vert _{k\phi}^{2}}{\mathcal{Z}_{k\phi}})$
on the complement of $A_{k\phi}$ finishes the proof of the lemma. 
\end{proof}

\subsection{Proofs of upper and lower bounds in Theorem \ref{thm:intro large dev}}

To simplify the notation we assume that $V=1.$ First we will prove
the upper bound of the theorem (without the normalization factor).
It does not use the Bernstein-Markov property of the measure $\nu.$
It will be convenient to first establish the LDP for the non-normalized
measures $\tilde{\mu}^{(N_{k})}$ (formula \ref{eq:non-normalied measure for point pr}). 
\begin{prop}
\label{pro:upper bound over closed st}Assume that $\nu$ is a probability
measure supported on the compact set $K$ in $X$ and let $\mathcal{F}$
be a closed set in $\mathcal{P}(K).$ Then \[
\limsup_{k\rightarrow\infty}k^{-(n+1)}\log(\left\Vert \det\Psi_{k}\right\Vert _{L^{2}(\nu,K^{N_{k}}\cap\mathcal{F})}^{2}\leq-\inf_{\mu\in\mathcal{F}}E_{\omega}(\mu)+\mathcal{E}_{\omega}(P_{(K_{0},\omega)}\phi_{0})\]
\end{prop}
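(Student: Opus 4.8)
The plan is to prove the upper bound over closed sets by a standard covering argument, exploiting the fact that the only role of the closed set $\mathcal{F}$ enters through the exponential of a linear statistic (a ``tilt'' by a continuous potential $\phi$), for which the asymptotics are already known from Theorem \ref{thm:[asym-Fekete]}. The starting observation is the basic inequality: for any $\mu \in \mathcal{P}(K)$ and any continuous $\phi \in C^0(X)$, and any configuration $\mathbf{x}=(x_1,\dots,x_{N_k})$ with empirical measure $\delta_{N_k}/N_k$ close to $\mu$, one has
\[
\left\Vert \det\Psi_k\right\Vert^2(\mathbf{x}) = \left\Vert \det\Psi_k\right\Vert_{k\phi}^2(\mathbf{x})\, e^{k\sum_i \phi(x_i)} \le \left\Vert \det\Psi_k\right\Vert_{L^\infty(k\phi,K^{N_k})}^2\, e^{k N_k \int_X \phi\, d\mu + o(kN_k)},
\]
so integrating over the slice $K^{N_k}\cap\mathcal{F}$ (and using that $\nu$ is a probability measure, so no Bernstein--Markov input is needed on the $\nu$-side here — one only needs the $L^\infty$ asymptotics \eqref{eq:conv trans diam}) yields, after taking $k^{-(n+1)}\log$ and letting $k\to\infty$,
\[
\limsup_k k^{-(n+1)}\log\left\Vert \det\Psi_k\right\Vert_{L^2(\nu,K^{N_k}\cap\mathcal{F})}^2 \le -\mathcal{E}(P_K\phi) + \mathcal{E}(P_{K_0}\phi_0) + \sup_{\mu\in\mathcal{F}}\int_X \phi\, d\mu .
\]

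The next step is to make this useful by localizing. Because the empirical measures live in the compact metrizable space $\mathcal{P}(K)$, it suffices (by the standard reduction for large-deviation upper bounds over closed sets, i.e. it is enough to prove the ``weak'' upper bound over small balls and then use compactness of $\mathcal{F}$) to show that for every $\mu_0 \in \mathcal{P}(K)$,
\[
\inf_{B \ni \mu_0} \limsup_k k^{-(n+1)}\log\left\Vert \det\Psi_k\right\Vert_{L^2(\nu,K^{N_k}\cap B)}^2 \le -E_\omega(\mu_0) + \mathcal{E}_\omega(P_{(K_0,\omega)}\phi_0),
\]
the infimum over open neighbourhoods $B$ of $\mu_0$. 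Applying the displayed bound with $\mathcal{F}$ replaced by a small weak neighbourhood $B$ of $\mu_0$ gives, for each fixed $\phi$, a bound of the form $-\mathcal{E}(P_K\phi) + \mathcal{E}(P_{K_0}\phi_0) + \int_X \phi\, d\mu_0 + (\text{error that}\to 0\text{ as }B\downarrow\{\mu_0\})$, since $\mu\mapsto \int\phi\,d\mu$ is weakly continuous. Taking the infimum over $B$ kills the error term, and then optimizing over $\phi\in C^0(X)$ produces
\[
-\sup_{\phi\in C^0(X)}\Big(\mathcal{E}(P_K\phi) - \int_X \phi\, d\mu_0\Big) + \mathcal{E}(P_{K_0}\phi_0).
\]
By Proposition \ref{pro:prop of s} (formula \eqref{eq:s in terms of cont}), the supremum appearing here is precisely $E_\omega(\mu_0)$ when $\mu_0$ is supported on $K$ — this is the identity $E(\mu) = \sup_\phi \mathcal{E}(P_K\phi) - \int_X\phi\,\mu$ with $\phi$ ranging over all of $C^0(X)$ — so we get exactly the desired local bound, and the covering/compactness argument then upgrades it to arbitrary closed $\mathcal{F}$, with the infimum over $\mu_0\in\mathcal{F}$ of $E_\omega(\mu_0)$ on the right.

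The step I expect to require the most care is the localization, i.e.\ controlling the passage from the global inequality to the infimum over neighbourhoods and justifying that the $o(kN_k)$-type errors (coming from replacing $\sum_i\phi(x_i)$ by $N_k\int\phi\,d\mu$ on the slice $K^{N_k}\cap B$, and from the $L^\infty$ asymptotics \eqref{eq:conv trans diam}) are genuinely uniform and vanish in the limit; here one uses that $\phi$ is uniformly continuous on the compact $K$ and that $N_k = k^n + o(k^n)$, so $N_k\int\phi\,d\mu = k^n\int\phi\,d\mu + o(k^{n})$ and the linear-statistic contribution is $k\cdot N_k\int\phi\,d\mu + o(k^{n+1})$, matching the speed. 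A secondary point to be careful about is invoking \eqref{eq:s in terms of cont} with the full space $C^0(X)$ rather than just continuous $\omega$-psh functions — this is exactly why Proposition \ref{pro:prop of s} was stated in that generality, and it is what lets the optimization over arbitrary continuous tilts $\phi$ reconstruct the energy $E_\omega(\mu_0)$. No Bernstein--Markov property of $\nu$ is used anywhere in this argument, consistent with the last sentence of Theorem \ref{thm:intro large dev}.
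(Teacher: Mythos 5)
Your proposal is correct and follows essentially the same route as the paper's own proof: the same tilting of $\left\Vert \det\Psi_{k}\right\Vert ^{2}$ by a continuous $\phi$, the same $L^{\infty}$ asymptotics \ref{eq:conv trans diam} from Theorem \ref{thm:[asym-Fekete]} (with no Bernstein--Markov input on $\nu$), and the same identification of $\sup_{\phi}\left(\mathcal{E}(P_{K}\phi)-\int_{X}\phi\,d\mu\right)$ with $E_{\omega}(\mu)$ via formula \ref{eq:s in terms of cont} of Proposition \ref{pro:prop of s}. The only difference is in the bookkeeping of compactness: you localize to small weak neighbourhoods and conclude by a finite covering of the compact set $\mathcal{F}$, whereas the paper extracts a weakly convergent subsequence of near-maximizing configurations in $K^{N_{k}}\cap\mathcal{F}$; both implementations are valid.
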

\begin{proof}
We may assume that $K$ is not pluri-polar; otherwise the right hand
side is infinite \cite{bbgz}. Since $\nu$ is a probability measure
supported on $K$ we have \begin{equation}
(kN_{k})^{-1}\log(\left\Vert \det\Psi_{k}\right\Vert _{L^{2}(\nu,K^{N_{k}}\cap\mathcal{F})}^{2}\leq(kN_{k})^{-1}\log(\left\Vert \det\Psi_{k}\right\Vert _{L^{\infty}(K^{N_{k}}\cap\mathcal{F})}^{2}.\label{eq:prop upper bound closed: triv}\end{equation}
 Let $(x_{1},...,x_{N_{k}})\in K^{N_{k}}\cap\mathcal{F}$ be a configuration
realizing the sup in the r.h.s. above and fix a continuous $\omega-$psh
function $\phi$ on $X.$ Then the r.h.s above may be written as $(kN_{k})^{-1}\log(\left\Vert \Psi_{k}\right\Vert ^{2}(x_{1},...,x_{N_{k}}))=$\begin{equation}
=(kN_{k})^{-1}\log(\left\Vert \det\Psi_{k}\right\Vert _{k\phi}^{2}(x_{1},...,x_{N_{k}}))+\frac{1}{N_{k}}\sum_{i=1}^{N_{k}}\delta_{x_{i}}\phi\leq\label{eq:prop upper bound over closed s}\end{equation}
 \[
\leq(kN_{k})^{-1}\log(\left\Vert \det\Psi_{k}\right\Vert _{L^{\infty}(k\phi,K^{N_{k}})}^{2}+\frac{1}{N_{k}}\sum_{i=1}^{N_{k}}\delta_{x_{i}}\phi\]
 After passing to a subsequence we may assume, by weak compactness,
that \[
\frac{1}{N_{k}}\sum_{i=1}^{N_{k}}\delta_{x_{i}}\rightarrow\mu\in\mathcal{F}\]
 weakly, since $\mathcal{F}$ is closed. In particular, since $\phi$
is a continuous function on $X$ it follows that\begin{equation}
(kN_{k})^{-1}\log(\left\Vert \det\Psi_{k}\right\Vert _{L^{\infty}(k\phi,K^{N_{k}})}^{2}+\frac{1}{N_{k}}\sum_{i=1}^{N_{k}}\delta_{x_{i}}\phi\rightarrow\mathcal{E}_{\omega}(P_{K_{0}}\phi_{0})-\mathcal{E}_{\omega}(P_{K}\phi)+\int_{X}\phi\mu,\label{eq:prop upper bound over closed: fek}\end{equation}
 using \ref{eq:conv trans diam}. Since this holds for any such $\phi$
combining \ref{eq:prop upper bound over closed s} and \ref{eq:prop upper bound over closed: fek}
gives, also using \ref{eq:s in terms of cont} in Proposition \ref{pro:prop of s},
\begin{equation}
\limsup_{k\rightarrow\infty}(kN_{k})^{-1}\log(\left\Vert \det\Psi_{k}\right\Vert ^{2}(x_{1},...,x_{N_{k}}))\leq-E_{\omega}(\mu)+\mathcal{E}_{\omega}(P_{K_{0}}\phi_{0}))\label{eq:proof prop upper bd a}\end{equation}
 for the chosen subsequence of configurations. Hence, by \ref{eq:prop upper bound closed: triv}
\[
\limsup_{k\rightarrow\infty}(kN_{k})^{-1}\log(\left\Vert \det\Psi_{k}\right\Vert _{L^{2}(\nu,K^{N_{k}}\cap\mathcal{F})}^{2}\leq\sup_{\mu\in\mathcal{F}}(-E_{\omega}(\mu))+\mathcal{E}_{\omega}(P_{K_{0}}\phi_{0}))\]
 which finishes the proof of the proposition. 
\end{proof}
Finally, we will prove the following lower bound: 
\begin{prop}
Suppose that the measure $\nu$ has the Bernstein-Markov property
wrt the set $K$ in $X.$ Then for any open set $\mathcal{G}$ in
$\mathcal{P}(K)$\[
\liminf_{k\rightarrow\infty}k^{-(n+1)}\log\left\Vert \det\Psi_{k}\right\Vert _{L^{2}(\nu,K^{N_{k}}\cap\mathcal{G})}^{2}\geq-\inf_{\mu\in\mathcal{G}}E_{\omega}(\mu)+\mathcal{E}_{\omega}(P_{(K_{0},\omega)}\phi_{0})\]
\end{prop}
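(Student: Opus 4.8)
The plan is to reduce the estimate over the open set $\mathcal{G}$ to a statement about a single approximating measure, exploiting the localization lemma (Lemma \ref{lem:localization}) together with the approximation results for the energy (Proposition \ref{pro:prop of s} and Lemma \ref{lem:appr}). It suffices to show that for every $\mu\in\mathcal{G}$ with $E_\omega(\mu)<\infty$ (if no such $\mu$ exists the right-hand side is $+\infty$ and there is nothing to prove) one has the lower bound with $-E_\omega(\mu)$ in place of $-\inf_{\mathcal{G}}E_\omega(\mu)$; taking the supremum over such $\mu$ then finishes the argument.

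So fix $\mu\in\mathcal{G}$ with $E_\omega(\mu)<\infty$, let $\phi_\mu\in\mathcal{E}^1(X,\omega)$ be a potential of $\mu$ (so $MA(\phi_\mu)=\mu$), and choose a sequence $\phi_j\in C^0(X)\cap PSH(X,\omega)$ decreasing to $\phi_\mu$ (possible by Demailly's approximation theorem). Set $\mu_j:=MA(P_{(K,\omega)}\phi_j)$. By Lemma \ref{lem:appr} we have $\mu_j\to\mu$ weakly and $E_\omega(\mu_j)\to E_\omega(\mu)$. Since $\mathcal{G}$ is open and $\mu_j\to\mu\in\mathcal{G}$, for $j$ large we have $\mu_j\in\mathcal{G}$; moreover one can find a weak neighborhood $\mathcal{U}_j$ of $\mu_j$ contained in $\mathcal{G}$. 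The point is now to show that the configurations in the localization set $A_{k\phi_j}$ of \ref{eq:the set a} have empirical measures $j_{N_k}(\mathbf{x}_k)$ that converge weakly to $\mu_j=MA(P_{(K,\omega)}\phi_j)$, which is exactly the content of point $(ii)$ in Theorem \ref{thm:[asym-Fekete]} (applied with $\phi=\phi_j$): any configuration with $\liminf k^{-(n+1)}\log\mu_{k\phi_j}^{(N_k)}(\mathbf{x}_k)\geq0$ — hence in particular any configuration in $A_{k\phi_j}$, after noting the normalization — has empirical measure converging to $MA(P_{(K,\omega)}\phi_j)$. Consequently, for $k$ large, $A_{k\phi_j}\subset K^{N_k}\cap\mathcal{U}_j\subset K^{N_k}\cap\mathcal{G}$ (here one should be slightly careful and instead argue that the part of $A_{k\phi_j}$ whose empirical measure is close to $\mu_j$ captures essentially all of the $L^2$-mass, using the exponential decay of $\mu_{k\phi_j}^{(N_k)}$ off $A_{k\phi_j}$ as in the proof of Lemma \ref{lem:localization}). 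Therefore
\[
\liminf_{k\to\infty}k^{-(n+1)}\log\left\Vert\det\Psi_k\right\Vert_{L^2(\nu,K^{N_k}\cap\mathcal{G})}^2 \;\geq\; \liminf_{k\to\infty}k^{-(n+1)}\log\Big(\int_{A_{k\phi_j}}\left\Vert\det\Psi_k\right\Vert^2 d\nu^{N_k}\Big).
\]
Applying Lemma \ref{lem:localization} with $\phi=\phi_j$, the right-hand side is bounded below by
\[
\mathcal{E}_\omega(P_{K_0}\phi_0)-\mathcal{E}_\omega(P_K\phi_j)+\int_X\phi_j\,MA(P_K\phi_j).
\]
Using the orthogonality relation \ref{eq:og relatgion}, $\int_X\phi_j\,MA(P_K\phi_j)=\int_X P_K\phi_j\,MA(P_K\phi_j)$, so this last quantity equals $\mathcal{E}_\omega(P_{K_0}\phi_0)-\big(\mathcal{E}_\omega(P_K\phi_j)-\int_X P_K\phi_j\,\mu_j\big)$, and since $P_K\phi_j\in\mathcal{E}^1(X,\omega)$ is a candidate in the sup \ref{eq:def of e as sup} defining $E_\omega(\mu_j)$, we get $\mathcal{E}_\omega(P_K\phi_j)-\int_X P_K\phi_j\,\mu_j\le E_\omega(\mu_j)$, hence the bound $\geq -E_\omega(\mu_j)+\mathcal{E}_\omega(P_{K_0}\phi_0)$. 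Letting $j\to\infty$ and using $E_\omega(\mu_j)\to E_\omega(\mu)$ gives $\geq -E_\omega(\mu)+\mathcal{E}_\omega(P_{K_0}\phi_0)$, and taking the sup over admissible $\mu\in\mathcal{G}$ completes the proof.

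The main obstacle, and the step requiring the most care, is the claim that for large $k$ the set $A_{k\phi_j}$ (or the dominant portion of it, in terms of $L^2$-mass) is contained in $K^{N_k}\cap\mathcal{G}$. This is where the Bernstein-Markov hypothesis on $\nu$ enters crucially: it is needed to pass from the $L^\infty$/transfinite-diameter asymptotics \ref{eq:conv trans diam} to the $L^2$ statement \ref{eq:conv free en not intro} and to guarantee, via points $(ii)$–$(iii)$ of Theorem \ref{thm:[asym-Fekete]}, that the expected empirical measure under $\mu_{k\phi_j}^{(N_k)}$ converges to $MA(P_{(K,\omega)}\phi_j)$ and that the measure concentrates on $A_{k\phi_j}$. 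One must combine this concentration with the weak convergence of empirical measures on $A_{k\phi_j}$ to conclude that the $L^2$-integral over $K^{N_k}\cap\mathcal{G}$ differs from the integral over $A_{k\phi_j}$ only by a subexponential (in $k^{n+1}$) factor; the exponential-decay bookkeeping is exactly parallel to the argument around \ref{eq:quotient conv to one} in the proof of Lemma \ref{lem:localization}, so the essential new input is just the openness of $\mathcal{G}$ together with $\mu_j\to\mu$.
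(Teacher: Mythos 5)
Your proposal is correct and follows essentially the same route as the paper: reduce to a fixed finite-energy $\mu\in\mathcal{G}$, approximate its potential by continuous $\omega$-psh $\phi_{j}$ so that $\mu_{j}=MA(P_{K}\phi_{j})\in\mathcal{G}$ via Lemma \ref{lem:appr}, show $A_{k\phi_{j}}\subset(\delta_{N_{k}}/N_{k})^{-1}\mathcal{G}$ for large $k$ using Theorem \ref{thm:[asym-Fekete]}, and conclude with Lemma \ref{lem:localization}, the orthogonality relation and $E(\mu_{j})\rightarrow E(\mu)$. The only point where you hedge (restricting to the part of $A_{k\phi_{j}}$ carrying most of the $L^{2}$-mass) is unnecessary: the full inclusion holds outright by the contradiction argument, since a sequence of configurations in $A_{k\phi_{j}}$ avoiding $(\delta_{N_{k}}/N_{k})^{-1}\mathcal{G}$ would have empirical measures converging to $\mu_{j}$ inside the closed complement of $\mathcal{G}$, contradicting $\mu_{j}\in\mathcal{G}.$
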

\begin{proof}
For a given $\mu\in\mathcal{G}$ we have to prove \begin{equation}
\liminf_{k\rightarrow\infty}k^{-(n+1)}\log\left\Vert \det\Psi_{k}\right\Vert _{L^{2}(\nu,K^{N_{k}}\cap\mathcal{G})}^{2}\geq-E_{\omega}(\mu)+\mathcal{E}_{\omega}(P_{(K_{0},\omega)}\phi_{0})\label{eq:pf lower bound: ineq to prove}\end{equation}
 We may assume that $E(\mu)<\infty$ (otherwise the statement is trivially
true). But then Theorem \ref{thm:var sol of ma} gives that there
exists $\phi_{\mu}\in\mathcal{E}^{1}(X,\omega)$ such that $MA(\phi_{\mu})=\mu.$
To fix ideas we first assume that $\phi_{\mu}$ is continuous. By
\ref{eq:pf lower bd: a set}\[
\liminf_{k\rightarrow\infty}k^{-(n+1)}\log\left\Vert \det\Psi_{k}\right\Vert _{L^{2}(\nu,K^{N_{k}}\cap\mathcal{G})}^{2}\geq\liminf_{k\rightarrow\infty}k^{-(n+1)}\log\left\Vert \det\Psi_{k}\right\Vert _{L^{2}(\nu,K^{N_{k}}\cap A_{k\phi_{\mu}})}^{2}\geq\]
 \[
\geq\left(-\mathcal{E}(P_{K}\phi_{\mu})+\int\phi_{\mu}MA(P_{K}\phi_{\mu})\right)+\mathcal{E}_{\omega}(P_{K_{0}}\phi_{0})\]
 using lemma \ref{lem:localization} in the last step. Since, $\phi_{\mu}$
is assumed continuous $P_{K}\phi_{\mu}=\phi_{\mu}$ almost everywhere
wrt $MA(P_{K}\phi_{\mu})$ (by the orthogonality relation \ref{eq:og relatgion})
and hence the first terms above equals $-E(\mu),$ proving the desired
bound. Finally, in the general case when $\phi_{\mu}$ is a general
potential of finite energy we take a sequence $\phi_{j}$ of continuous
$\omega-$psh functions decreasing to $\phi_{\mu}.$ By Lemma \ref{lem:appr}
\[
\mu_{j}:=MA(P_{K}\phi_{j})\rightarrow MA(\phi_{\mu})=\mu\]
 in $\mathcal{P}(K).$  In particular, since $\mathcal{G}$ is assumed
open in $\mathcal{P}(K),$\begin{equation}
\mu_{j}\subset\mathcal{G}\label{eq:pf lower bd}\end{equation}
 for $j>>1.$ Next, fix a large index $j$ and consider the set $A_{k\phi_{j}},$
defined as in formula \ref{eq:the set a}. Then for $k>>1$\begin{equation}
A_{k\phi_{j}}\subset(\frac{\delta_{N_{k}}}{N_{k}})^{-1}\mathcal{G}\label{eq:pf lower bd: a set}\end{equation}
 Indeed, assume for a contradiction that the previous statement is
false. Then there is a sequence $\mathbf{(x}_{k_{i}})$ of configurations
$\mathbf{x}_{k_{i}}\in K^{N_{k}}-(\frac{\delta_{N_{k}}}{N_{k}})^{-1}\mathcal{G}$
such that\[
\liminf_{k_{i}}k_{i}^{-(n+1)}\log(\gamma_{k_{i}\phi_{j}}(\mathbf{x}_{k_{i}})\geq0\]
 But then Theorem \ref{thm:[asym-Fekete]} gives that \[
\mu_{\mathbf{x}_{k_{i}}}\rightarrow\mu_{j}\]
 in $\mathcal{P}(K),$ forcing $\mu_{j}\in\mathcal{P}(K)-\mathcal{G},$
which contradicts \ref{eq:pf lower bd}. We may now repeat the previous
argument with $\phi_{\mu}$ replaced by $\phi_{j}$ for $j>>1$ and
instead get the lower bound \[
\left(-\mathcal{E}(P_{K}\phi_{j})+\int\phi_{j}MA(P_{K}\phi_{j})\right)+\mathcal{E}_{\omega}(P_{K_{0}}\phi_{0})=-E(\mu_{j})+\mathcal{E}_{\omega}(P_{K_{0}}\phi_{0})\]
Finally, letting $j$ tend to infinity and using the convergence in
Lemma \ref{lem:appr} concludes the proof in the general case. 
\end{proof}
Finally, note that to obtain the rate functional $H_{(K,\omega)}$
for the LDP wrt the normalized measures $\mu^{(N_{k})}$ we just have
to normalize by dividing by $\mathcal{Z}_{k}$ which, by the first
point in Thm \ref{thm:[asym-Fekete]} gives the rate functional \[
H_{(K,\omega)}(\mu):=(E_{\omega}(\mu))+\mathcal{E}_{\omega}(P_{K_{0}}\phi_{0})-\mathcal{E}_{\omega}(P_{K}0)-\mathcal{E}_{\omega}(P_{K_{0}}\phi_{0})=(E_{\omega}(\mu))-\mathcal{E}_{\omega}(P_{K}0)\]
 which coincides with the definition in formula \ref{eq:rate functional as energies}
of $H_{(K,\omega)}.$ This completes the proof of the theorem.
\begin{rem}
Applying the large deviation principle established above (for the
sequence of probability measures) to $\mathcal{F}=\mathcal{G}=\mathcal{P}(K)$
gives \[
\log(1)=0=\inf_{\mu\in\mathcal{P}(K)}(E_{\omega}(\mu))+\mathcal{E}_{\omega}(P_{K}0)-\mathcal{E}_{\omega}(P_{K_{0}}\phi_{0}),\]
 which proves the formula \ref{eq:capacity as inf}.
\end{rem}

\subsection{\label{sub:using g-e}Proof of Theorem \ref{thm:intro large dev}
using the Gärtner-Ellis theorem}

For the proof and references for the following abstract version of
the Gärtner-Ellis theorem see \cite{d-z} (Cor 4.6.14, p. 148)
\begin{thm}
\label{thm:(Abstract-G=0000E4rtner-Ellis-Theorem}(Abstract Gärtner-Ellis
Theorem ). Let $\mathcal{M}$ be a locally convex Hausdorff topological
vector space and $\Gamma_{k}$ a sequence of Borel measures on $\mathcal{M}$
which is exponentially tight. Assume that there is a sequence of positive
numbers $r_{k}$ such that the Laplace transforms $\widehat{\Gamma}_{k},$
seen as functionals on the dual $\mathcal{M}^{*},$ satisfy \[
\frac{1}{r_{k}}\widehat{\Gamma}_{k}[r_{k}u]\rightarrow\Lambda[u]\]
 for any $u$ in $\mathcal{M}^{*}$ where the functional $\Lambda$
is Gateau differentiable on $\mathcal{M}^{*}.$ Then $\Gamma_{k}$
satisfies a LDP with speed $r_{k}$ and with a rate functional $H:=\Lambda^{*}$
on $\mathcal{M},$ i.e. $H$ is the Legendre-Fenchel transform of
$\Lambda.$
\end{thm}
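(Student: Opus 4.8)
The statement is the abstract form of the Gärtner--Ellis theorem, and the plan is to recall its standard proof (as in \cite{d-z}), split into the large deviation upper bound, the lower bound, and the identification and goodness of the rate functional.

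For the \textbf{upper bound} I would argue first for compact sets. Given a compact $K\subset\mathcal{M}$ and a level $\ell<\inf_{K}\Lambda^{*}$, for each $x\in K$ there is, by definition of the Legendre--Fenchel transform $\Lambda^{*}$, an element $u=u_{x}\in\mathcal{M}^{*}$ with $\langle u,x\rangle-\Lambda[u]>\ell$. A Chebyshev (exponential Markov) bound applied to $\widehat{\Gamma}_{k}$, combined with the hypothesis $r_{k}^{-1}\widehat{\Gamma}_{k}[r_{k}u]\to\Lambda[u]$, shows that some weak neighbourhood of $x$ has $\Gamma_{k}$-mass at most $e^{-r_{k}(\ell-\varepsilon)}$ for all large $k$. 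Covering $K$ by finitely many such neighbourhoods yields $\limsup_{k}r_{k}^{-1}\log\Gamma_{k}(K)\le-\inf_{K}\Lambda^{*}$. Exponential tightness then promotes this from compact to arbitrary closed sets $\mathcal{F}$: choosing compact $K_{M}$ with $\limsup_{k}r_{k}^{-1}\log\Gamma_{k}(\mathcal{M}\setminus K_{M})\le-M$, one replaces $\mathcal{F}$ by $\mathcal{F}\cap K_{M}$ up to an error controlled by $M$, and lets $M\to\infty$.

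For the \textbf{lower bound} I would fix an open set $\mathcal{G}$ and $\mu\in\mathcal{G}$ with $\Lambda^{*}(\mu)<\infty$ and exhibit a tilting of $\Gamma_{k}$ that concentrates near $\mu$. Here the Gateaux differentiability of $\Lambda$ on $\mathcal{M}^{*}$ is the essential hypothesis: it guarantees that for a suitable $\lambda=\lambda_{\mu}\in\mathcal{M}^{*}$ one has $\Lambda^{*}(\mu)=\langle\lambda,\mu\rangle-\Lambda[\lambda]$ with $\mu$ the unique maximizer of $x\mapsto\langle\lambda,x\rangle-\Lambda^{*}(x)$, i.e. $\mu$ is an exposed point of $\Lambda^{*}$ with exposing hyperplane $\lambda$, and that such exposed points suffice to recover the full lower bound. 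One then introduces the tilted probability measures $d\widetilde{\Gamma}_{k}:=Z_{k}^{-1}e^{r_{k}\langle\lambda,\cdot\rangle}d\Gamma_{k}$ with $Z_{k}:=\int e^{r_{k}\langle\lambda,\cdot\rangle}d\Gamma_{k}$, noting $r_{k}^{-1}\log Z_{k}\to\Lambda[\lambda]$ by hypothesis. Applying the already established upper bound to $\widetilde{\Gamma}_{k}$ together with the uniqueness of the maximizer shows that $\widetilde{\Gamma}_{k}$ places asymptotically all of its mass on any neighbourhood of $\mu$ lying in $\mathcal{G}$; undoing the tilt gives $\liminf_{k}r_{k}^{-1}\log\Gamma_{k}(\mathcal{G})\ge-\langle\lambda,\mu\rangle+\Lambda[\lambda]=-\Lambda^{*}(\mu)$, and taking the supremum over $\mu\in\mathcal{G}$ finishes the bound.

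Finally, the rate functional is $H=\Lambda^{*}$ by construction; its lower semicontinuity is automatic for a Legendre--Fenchel transform, and goodness follows by combining the upper bound with exponential tightness, since the sublevel sets of $\Lambda^{*}$ are closed and, up to arbitrarily small error in $\Gamma_{k}$-mass, contained in a fixed compact set, hence compact. \textbf{The main obstacle} is the lower bound: extracting the exposed-point/exposing-hyperplane structure purely from Gateaux differentiability of $\Lambda$ in an infinite-dimensional locally convex space, and justifying the change-of-measure argument when the linear functional $\langle\lambda,\cdot\rangle$ is unbounded on $\mathcal{M}$, is precisely the technical heart of the abstract Gärtner--Ellis theorem and is where one relies on the careful treatment in \cite{d-z}.
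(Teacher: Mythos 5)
You should note first that the paper does not prove this statement at all: it is quoted as the abstract G\"artner--Ellis theorem and the proof is explicitly delegated to Dembo--Zeitouni \cite{d-z} (Cor.\ 4.6.14), so the comparison is with the proof given there. Your upper bound (exponential Chebyshev estimate on a weak neighbourhood of each point of a compact set, finite subcover, then exponential tightness to pass from compact to closed sets) and your remarks on lower semicontinuity and goodness of $\Lambda^{*}$ are the standard arguments and are fine as far as they go.

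The genuine gap is exactly where you place it, and it is not a removable technicality along the route you chose. In an infinite-dimensional locally convex space, Gateaux differentiability of $\Lambda$ does \emph{not} guarantee, for a given $\mu$ with $\Lambda^{*}(\mu)<\infty$, that the supremum defining $\Lambda^{*}(\mu)$ is attained by some $\lambda_{\mu}\in\mathcal{M}^{*}$, nor that $\mu$ is an exposed point of $\Lambda^{*}$, nor that the exposed points are plentiful enough to recover $\inf_{\mathcal{G}}\Lambda^{*}$; the finite-dimensional argument that upgrades the lower bound at exposed points to the full lower bound rests on relative-interior arguments special to $\R^{d}$. In addition, the tilting step needs exponential tightness of the tilted family $\widetilde{\Gamma}_{k}$, which is not automatic since $\left\langle \lambda,\cdot\right\rangle $ is unbounded on $\mathcal{M}$. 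This is precisely why the cited proof does not tilt in $\mathcal{M}$ at all: Dembo--Zeitouni prove Cor.\ 4.6.14 by applying the classical (finite-dimensional) G\"artner--Ellis theorem to the projections $x\mapsto(\left\langle u_{1},x\right\rangle ,...,\left\langle u_{d},x\right\rangle )$ -- Gateaux differentiability of $\Lambda$ is used only to make the projected limiting log-moment generating functions differentiable on $\R^{d}$ -- then assemble these finite-dimensional LDPs via the Dawson--G\"artner projective-limit theorem, identify the resulting rate function with $\Lambda^{*}$ by convex duality, and finally invoke exponential tightness to get goodness and the LDP in the original topology. So your upper-bound half matches the cited argument in spirit, but your lower-bound half does not close as written; to complete the proof you would either have to establish the exposed-point structure you assume (false in this generality) or switch to the projective-limit argument.
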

To apply this theorem to the present setting we let $\mathcal{M}(=\mathcal{M}(K))$
be the space of all signed finite Borel measures on $K$ with its
the usual weak topology, i.e. $\mu_{j}\rightarrow\mu$ iff \[
\left\langle u,\mu_{j}\right\rangle :=\int_{X}u\mu_{j}\rightarrow\int_{X}u\mu\]
 for any continuous function $u,$ i.e. for all $u\in C^{0}(K).$
As is well-known $\mathcal{M}$ is a locally convex Hausdorff topological
vector space and it is the topological dual of the vector space $C^{0}(K).$
Moreover, since $K$ is compact so is $\mathcal{M}$ and hence the
tightness condition in the theorem is automatic (see the definition
in \cite{d-z}). We let $\Gamma_{k}$ be the laws of the empirical
measure of the determinant process defined above: \[
\Gamma_{k}:=\frac{1}{N_{k}}\delta_{N_{k}}(\mu^{(N_{k})})\]
 The Laplace transform of $\Gamma_{k}$ is defined as the following
functional $\widehat{\Gamma}_{k}$ on the dual $\mathcal{M}^{*}=C^{0}(K):$\[
\widehat{\Gamma}_{k}[u]:=\int_{\mathcal{M}}d\Gamma_{k}(\mu)e^{\left\langle u,\mu\right\rangle }\]
which in the present setting can be written as \[
\widehat{\Gamma}_{k}[u]=\log\E(e^{\frac{1}{N_{k}}u(x_{1})+...})\]
by pulling back the integral to above to $K^{N}.$ By Theorem A and
Lemma 4.3 in \cite{b-b} we get (just as in the previous section)
\[
\frac{1}{kN_{k}}\widehat{\Gamma}_{k}[kN_{k}u]\rightarrow\Lambda[u]:=-(\mathcal{E}\circ P_{(K,\omega})(-u)+\mathcal{E}_{\omega}(P_{(K,\omega)}0)\]
 if $\nu$ has the Bernstein-Markov property wrt $(K,\omega_{u}).$
In other words, the convergence holds for all $u$ if $\nu$ has the
Bernstein-Markov property wrt $K$ (see section \ref{sub:Bernstein-Markov-measures}).
Moreover, by Theorem B in \cite{b-b} (stated as Theorem \ref{thm:diff thm}
in the present paper) $\Lambda$ is Gateaux differentiable. All in
all this means that we can apply the Gärtner-Ellis theorem and deduce
that an LDP holds with rate functional \[
H(\mu)=\Lambda^{*}(\mu):=\sup_{u\in C^{0}(X)}(\Lambda(u)-\left\langle u,\mu\right\rangle )\]
To conclude the proof of Theorem \ref{thm:intro large dev} we set
$\phi=-u$ above and then we just have to verify that $H(\mu)=\infty$
if $\mu$ is not a probability measure of finite energy. But since
the differential of $\Lambda$ is always a probability measure (by
Theorem \ref{thm:diff thm}), i.e. the image of the {}``gradient
map'' defined by $\Lambda$ is contained in $\mathcal{P}(K),$ this
follows by general convexity theory.

\subsection{\label{sub:Remarks-on-normalizations}Remarks on normalizations of
rate functionals, energy and Vandermonde determinants}

As shown above the LDP wrt the non-normalized measures $\left\Vert \det\Psi_{k}\right\Vert ^{2}\nu^{\otimes N}$
has a rate functional \begin{equation}
\tilde{E}_{\omega}(\mu):=E_{\omega}(\mu)-\mathcal{E}_{\omega}(P_{(K_{0},\omega)}\phi_{0})\label{eq:non-norm rate f}\end{equation}
where the constant $\mathcal{E}_{\omega}(P_{(K_{0},\omega)}\phi_{0})$
is independent (as it must) of the support $K$ of $\nu.$ It follows
immediately from the LDP that \begin{equation}
\tilde{E}_{\omega+dd^{c}\phi}(\mu):=\tilde{E}_{\omega}(\mu)+\int_{X}\phi\mu,\label{eq:transf of en with tild}\end{equation}
 which of course could also be proved directly using the explicit
expression \ref{eq:non-norm rate f}). Let us illustrate this in the
case of multivariate polynomials ensembles (section \ref{sec:Examples}).
We fix $\omega_{0}$ as in the beginning of section \ref{sec:Examples}
and choose the reference measure to be$(\nu_{0},0)$ where $\nu_{0}$
is the invariant probability measure on the unit-torus in $\C^{n}.$
Then the base $(\Psi_{k,i})$ can be taken as multinomials and $\det\Psi_{k}=\Delta^{(N_{k})}(z_{1},...,z_{N_{k}})$
is then the multivariate \emph{Vandermonde determinant} (as in section
\ref{sec:Examples}). The LDP in the corresponding \emph{weighted}
setting can now be symbolically written as \begin{equation}
|\Delta^{(N_{k})}(z_{1},...,z_{N_{k}})|^{2}e^{-k(\Phi(z_{1})+\cdots)}\sim e^{-\frac{1}{n!}k^{n+1}(\tilde{E}_{0}(\mu)+\int\Phi\mu)}\label{eq:rate f for vandermonde in ct case}\end{equation}
where the {}``normalized energy'' $\tilde{E}_{0}(\mu)$ is independent
of $\Phi$ and $K$ (by the transformation property \ref{eq:transf of en with tild}).
In the classical case when $n=1$ it is not hard to check that $\tilde{E}_{0}(\mu)$
is the classical logarithmic energy of a measure $\mu:$ \begin{equation}
\tilde{E}_{0}(\mu)=-\int_{\C}\log|z-w|\mu(z)\otimes\mu(w)\label{eq:unweighted energy as log energy n is one}\end{equation}
Indeed, taking $\omega$ to vanish on a neighborhood of the support
of $\mu$ and using the Green function expression \ref{eq:energy as green}
shows that $\tilde{E}_{0}(\mu)=-\int_{\C}\log|z-w|\mu(z)\otimes\mu(w)+C.$
To see that $C=0$ we take $\mu=\omega_{0}$ to be the invariant measure
on $S^{1},$ i.e. $\mu=dd^{c}\Phi_{0}$ for $\Phi_{0}=\log^{+}|z|^{2}$
so that $\tilde{E}_{0}(\mu)=0+C.$ Since $\Phi_{0}=0$ on the support
of $\mu=\omega_{0}$ we can then use the formula \ref{eq:non-norm rate f}
with $\omega=dd^{c}\Phi_{0}$ which gives $\tilde{E}_{0}(\mu)=\tilde{E}_{\omega_{0}}(\mu)=E_{\omega_{0}}(\omega_{0})-\mathcal{E}_{\omega_{0}}(P_{(S^{1},\omega_{0})}0)=0-0$
using that $P_{(S^{1},\omega_{0})}0=0$ (by the maximum principle).
All in all this forces $C=0$ showing that \ref{eq:unweighted energy as log energy n is one}
holds. Hence the rate functional in \ref{eq:rate functional as energies}
is, when $n=1,$ precisely the \emph{weighted logarithmic energy}
of $\mu$ (which is the subject of the book \cite{s-t}). In physical
terms $\Phi$ hence acts as an exterior potential. In fact, the weighted
energy appearing in the rate functional in \ref{eq:rate f for vandermonde in ct case}
can be extended to the setting when $K$ is \emph{non-compact }as
explained in the following section.

\subsection{\label{sub:Proof-ldp-non-cpt}Proof of the LDP for non-compact $K$ }

In this section we will obtain a variant of the LDP which applies
to non-compact sets $K$ and in particular to $K=\R^{n}$ or $K=\C^{n}.$
We will consider the following general setting. Starting with an open
set $U$ of $X$ such that $X-U$ is locally pluripolar (in the applications
that we have in mind $X-U$ will even be an analytic subvariety).
We will say that a pair $(\nu,\phi)$ of a measure $\nu$ on $U$
and a continuous function $\phi$ is \emph{admissible} if 
\begin{itemize}
\item $\phi\rightarrow\infty$ at infinity in $U$ (i.e. $\phi$ is proper
on $U)$
\item $\int e^{-k\phi}\nu<\infty$ for $k\geq k_{0}.$
\end{itemize}
We will denote the support of $\nu$ in $U$ by $K,$ which is a closed
set in $U$ (but possibly non-compact in $X$!). In the following
we fix a continuous metric $\left\Vert \cdot\right\Vert $ on $L\rightarrow X$
with normalized curvature form $\omega_{0}.$ Then \[
\mu_{k\phi}^{(N_{k})}:=\left\Vert \det\Psi_{k}\right\Vert ^{2}e^{-k\phi}\nu^{\otimes N}/\mathcal{Z}_{k}\]
 is a well-defined probability measure on $K^{N_{k}}$ for $k\geq k_{0}.$ 

The next theorem gives a LDP which is a variant of Theorem \ref{thm:intro large dev}.
It is formulated in terms of the standard weak topology on the space
$\mathcal{M}(K)$ of all signed measure on $K,$ induced by the dual
$C_{b}(K$) consisting of all functions $u$ on $K$ which are continuous
and bounded (recall that $K$ is not assumed to be compact!). 
\begin{thm}
Let $(\nu,\phi)$ be an admissible pair such that $(\nu,\phi+u)$
satisfies the BM-property \ref{eq:bm with phi} for any $u\in C_{b}(K)$
and such that that the support $K$ of $\nu$ is non-pluripolar. Then
the laws on $\mathcal{P}(K)$ of the probability measure $\mu_{k\phi}^{(N_{k})}$
on $K^{N_{k}}$ satisfy a \emph{large deviation principle} (LDP) with
a \emph{good rate functional $H(=H_{(K,\omega_{0},\phi)})$} and speed
$Vk^{n+1}.$ On the space $\mathcal{P}(K)$ the rate functional \emph{$H(\mu)$}
is minimized (and vanishes) precisely on the pluripotential equilibrium
measure $\mu_{eq}(:=MA(P_{(K,\omega_{0})}\phi).$ Moreover, the rate
functional may be decomposed as \begin{equation}
H(\mu)=E_{\omega_{0}}(\mu)+\int\phi\mu-C\label{eq:decomp of weighted energ-1}\end{equation}
where $C$ is a constant (depending on $(K,\omega_{0}+dd^{c}\phi)).$
\end{thm}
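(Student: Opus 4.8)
The plan is to reduce the non-compact statement to the already established compact LDP of Theorem \ref{thm:intro large dev} --- in its ``tilted'' form, where a bounded continuous weight is absorbed into the determinantal measure --- by exhausting $U$ by the sublevel sets
\[
K_M:=\{x\in K:\ \phi(x)\le M\},
\]
which are compact in $X$ because $\phi$ is proper on $U$ and $K$ is closed in $U$ (the same argument shows $\phi$ is bounded below on $K$). On each $K_M$ the function $\phi|_{K_M}$ is bounded and continuous, so, extending it to a continuous function on $X$, the triple $(\nu|_{K_M},\omega_0,\phi|_{K_M})$ falls squarely into the compact weighted setting, and Theorem \ref{thm:intro large dev} (together with Theorem \ref{thm:[asym-Fekete]} and the localization Lemma \ref{lem:localization}, which only involve $L^\infty$ and $L^2$ estimates over compact sets) yields, for each $M$, an LDP on $\mathcal{P}(K_M)$ with rate functional $\mu\mapsto E_{\omega_0}(\mu)+\int\phi\,\mu-C_M$, where $C_M=\mathcal{E}_{\omega_0}(P_{(K_M,\omega_0)}\phi)$ up to the fixed normalization $\mathcal{E}_{\omega_0}(P_{(K_0,\omega_0)}\phi_0)$. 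One must check that the Bernstein--Markov hypothesis on $(\nu,\phi+u)$, $u\in C_b(K)$, delivers the Bernstein--Markov property needed on the exhaustion, which is routine once one notes that for $M$ large $\int_{K_M}\|\cdot\|^2e^{-k\phi}d\nu$ and $\int_K\|\cdot\|^2e^{-k\phi}d\nu$ differ by a factor that does not spoil the sub-exponential comparison.

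The bridge from the finite-$M$ picture to the genuine non-compact process is an \emph{exponential tightness} estimate. For $b>0$ set $\mathcal{K}_R:=\{\mu\in\mathcal{P}(K):\ \int\phi\,\mu\le R\}$, which is relatively compact in $\mathcal{P}(K)$ by Prokhorov's theorem since $\phi$ is proper; the claim is that for $R=R(b)$ large,
\[
\limsup_{k\to\infty}\frac{1}{Vk^{n+1}}\log\Gamma_k\bigl(\mathcal{P}(K)\setminus\mathcal{K}_R\bigr)\le-b .
\]
To prove this one bounds, on the event $\{\tfrac1{N_k}\sum_i\phi(x_i)>R\}$, the factor $e^{-k\sum_i\phi(x_i)}$ by $e^{-\tfrac12kN_kR}\,e^{-\tfrac12k\sum_i\phi(x_i)}$, integrates, and uses the admissibility hypothesis $\int e^{-k\phi}d\nu<\infty$ (applied with $\tfrac12 k$, valid for $k\ge 2k_0$) together with the Bernstein--Markov pointwise bound to control $\int\|\det\Psi_k\|^2e^{-\tfrac12k\sum_i\phi}\nu^{\otimes N_k}$ by $e^{o(k^{n+1})}$, and finally the lower bound $\mathcal{Z}_k\ge e^{-o(k^{n+1})}$ coming again from Theorem \ref{thm:[asym-Fekete]}. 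Since $N_k=Vk^n+o(k^n)$, the dominant term is $-\tfrac12kN_kR\sim-\tfrac12Vk^{n+1}R$, so $R=3b/V$ works; in the model case $K=\mathbb{C}^n$ with Lebesgue measure this is exactly where the super-logarithmic growth \ref{eq:ass on growth of phi} enters.

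Granted exponential tightness, it suffices by the standard reduction (a weak LDP plus exponential tightness gives a full LDP, cf. \cite{d-z}) to establish the upper bound for compact subsets $\mathcal{F}\subset\mathcal{P}(K)$ and the lower bound for open subsets, with rate functional
\[
H(\mu):=E_{\omega_0}(\mu)+\int\phi\,\mu-C,\qquad C:=\inf_{\mu\in\mathcal{P}(K)}\Bigl(E_{\omega_0}(\mu)+\int\phi\,\mu\Bigr),
\]
understood as $+\infty$ whenever $\int\phi\,\mu=+\infty$. For a compact $\mathcal{F}$ one intersects with some $\mathcal{K}_R$ (losing at most $-b$ by exponential tightness) and with a sublevel set $\mathcal{P}(K_M)$; the compact-case upper bound of Proposition \ref{pro:upper bound over closed st}, applied with the weight $\phi$, then gives the estimate in terms of $E_{\omega_0}+\int\phi\,\cdot$ on $K_M$, and one lets $M\to\infty$ using that $P_{(K_M,\omega_0)}\phi$ decreases to $P_{(K,\omega_0)}\phi$ and that $\mathcal{E}_{\omega_0}$ is continuous along decreasing sequences (Proposition \ref{pro:energy is ups}). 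For the lower bound, given $\mu\in\mathcal{G}$ with $H(\mu)<\infty$ one has $E_{\omega_0}(\mu)<\infty$ and $\int\phi\,\mu<\infty$; taking the potential $\phi_\mu\in\mathcal{E}^1(X,\omega_0)$ with $MA(\phi_\mu)=\mu$ (Theorem \ref{thm:var sol of ma}) one runs the localization argument of the compact lower bound, the only change being that $\phi$ is now unbounded, which is harmless because $\int\phi\,MA(P_{(K,\omega_0)}\phi_j)\to\int\phi\,\mu<\infty$ along an approximating sequence $\phi_j\downarrow\phi_\mu$ of continuous $\omega_0$-psh functions, by Lemma \ref{lem:appr} and dominated convergence (using that $\phi$ is bounded below on $K$). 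Identifying $C=\mathcal{E}_{\omega_0}(P_{(K,\omega_0)}\phi)$ (up to the normalization $\mathcal{E}_{\omega_0}(P_{(K_0,\omega_0)}\phi_0)$), exactly as in the remark following the proof of Theorem \ref{thm:intro large dev}, gives the decomposition \ref{eq:decomp of weighted energ-1}; that $H$ is a good rate functional minimized precisely at $\mu_{eq}=MA(P_{(K,\omega_0)}\phi)$ then follows as in Proposition \ref{pro:rate}, properness of $H$ being immediate from exponential tightness.

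I expect the exponential tightness estimate to be the main obstacle: it is the one place where non-compactness genuinely bites, and it requires balancing the lower bound for $\mathcal{Z}_k$ against the tail integral at the precise speed $k^{n+1}$ --- which is exactly what the integrability hypothesis $\int e^{-k\phi}d\nu<\infty$ (and, in the Euclidean model, the super-logarithmic growth \ref{eq:ass on growth of phi}) is designed to supply. The remaining difficulties are bookkeeping: verifying that the compact-case theorems apply to the non-canonically weighted compact sets $K_M$, and that the Bernstein--Markov hypothesis descends to the exhaustion.
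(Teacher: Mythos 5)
Your overall strategy (exhaust $U$ by compacts, prove exponential tightness, deduce a full LDP from a weak one) is reasonable, and your tightness estimate is close in spirit to the paper's. But there is a genuine gap at the heart of the upper bound: the event $\{\delta_{N_k}/N_k\in\mathcal{F}\cap\mathcal{K}_R\}$ is \emph{not} contained in $K_M^{N_k}$ for any finite $M$ --- a configuration may place a small but nonzero fraction of its points outside $K_M$ while its empirical measure still lies in $\mathcal{F}\cap\mathcal{K}_R$ --- so "intersecting with $\mathcal{P}(K_M)$" does not reduce the estimate to Proposition \ref{pro:upper bound over closed st} on the compact set $K_M$. Nor can the outside points be dismissed crudely: the gain from the weight is only $e^{-kM}$ per outlying point, while $\left\Vert \det\Psi_{k}\right\Vert ^{2}$ is of size $e^{O(k^{n+1})}$, so at the speed $k^{n+1}$ this contribution is not negligible for fixed $M.$ The same issue resurfaces in the normalization: for the LDP lower bound you need an \emph{upper} bound on the full partition function $\mathcal{Z}_{k\phi}=\left\Vert \det\Psi_{k}\right\Vert _{L^{2}(k\phi,\nu^{\otimes N_{k}})}^{2}$ taken over the non-compact $K$ at precision $e^{o(k^{n+1})},$ and your remark that the finite-$M$ and full integrals "differ by a factor that does not spoil the sub-exponential comparison" is asserted exactly where the real work lies. (Your tightness step also overstates matters in claiming the half-weight integral is $e^{o(k^{n+1})}$ --- it is only $e^{O(k^{n+1})}$ --- though that part survives by taking $R$ large.)

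What is missing is precisely the content of the paper's localization Lemma \ref{lem:pf of non-cpt ldp}: the support of $MA(P_{K}\phi)$ is compact in $U$ (contained in $\{P_{K}\phi\geq\phi\}$), whence $P_{K\cap B_{R}}\phi=P_{K}\phi$ by the domination principle, the identity $\sup_{K}\left\Vert \Psi_{k}\right\Vert ^{2}e^{-k\phi}=\sup_{K}\left\Vert \Psi_{k}\right\Vert ^{2}e^{-kP_{K}\phi},$ and above all the pointwise Bernstein--Markov-type bound $\left\Vert \Psi_{k}\right\Vert ^{2}e^{-k\phi}\leq C_{\epsilon}e^{\epsilon k}e^{k(P_{K}\phi-\phi)}\int\left\Vert \Psi_{k}\right\Vert ^{2}e^{-k\phi}d\nu,$ which controls the density \emph{relative to the partition function} with a factor decaying like $e^{-k(\phi-P_{K}\phi)}$ off a fixed compact set. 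Combined with the decomposition $k\phi=(k-k_{0})\phi+k_{0}\phi,$ concavity of $P_{K}$ and admissibility, this yields the free-energy convergence \ref{eq:conv of free energy in non-cpt} for the unbounded weight and simultaneously powers the exponential tightness and the treatment of outlying points; it also justifies your unproved claim that $P_{(K_{M},\omega_{0})}\phi$ stabilizes at $P_{(K,\omega_{0})}\phi$ for large $M.$ With these estimates in hand the paper does not redo the weak LDP by exhaustion at all, but feeds the resulting log moment generating functional (Gateaux differentiable by Theorem \ref{thm:diff thm} after localization) into the abstract G\"artner--Ellis theorem, and then identifies the rate functional with $E_{\omega_{0}}(\mu)+\int\phi\mu-C$ by a Legendre-transform computation. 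Your proposal as written omits these estimates, so the reduction to the compact case does not go through.
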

Before starting the proof it will be convenient to make the additional
(very weak) assumption that $K$ be regular in the sense that $P_{K}(\phi+u)\in C^{0}(X)$
if $u\in C_{b}^{0}(K).$ This assumption may be removed by approximation
just as in the proof of Theorem A in \cite{b-b} (and anyway it is
automatically satisfied in the main cases $K=\R^{n}$ or $K=\C^{n}$
considered below). To simplify the notation we will often omit the
subscript $\omega$ in $P_{(K,\omega)}$ and $\mathcal{E}_{\omega}$
and simply write $P_{K}$ and $\mathcal{E},$ respectively.

\subsubsection{Localization to a {}``ball'' $B_{R}$}

Let $\rho$ be a given proper function on $U$ and write $B_{R}:=\{\rho\leq-R\}$
so that $B_{R}$ is sequence of increasing compact sets covering $U.$
We first note that the support of the equilibrium measure $\mu_{eq}:=MA(P_{K}\phi)$
is compact in $U.$ Indeed, in general it is contained in the closed
set $D:=\{P_{K}\phi\geq\phi\}$ (this is a well-known properties of
{}``free envelopes'' and follows for example from Prop 1.10 in \cite{b-b}
or rather its proof) and since $\phi\rightarrow\infty$ at infinity
in $U$ and $P_{K}\phi\leq C$ (using $P_{K}\phi\leq P_{K\cap B_{R}}\phi)$
it follows that $D$ is compact in $U.$ Let us fix a {}``ball''
$B_{R}$ in $U$ containing the support of $\mu_{eq}.$ 
\begin{lem}
\label{lem:pf of non-cpt ldp}We have that $P_{K\cap B_{R}}\phi=P_{K}\phi.$
Moreover, for any $\Psi_{k}\in H^{0}(X,kL)$ \begin{equation}
\sup_{K}\left\Vert \Psi_{k}\right\Vert ^{2}e^{-k\mbox{\ensuremath{\phi}}}=\sup_{K}\left\Vert \Psi_{k}\right\Vert ^{2}e^{-kP_{K}\mbox{\ensuremath{\phi}}}\label{eq:sup same as projec}\end{equation}
 and for any $\epsilon>0,$ there is $C_{\epsilon}>0$ such that \begin{equation}
(\left\Vert \Psi_{k}\right\Vert ^{2}e^{-k\mbox{\ensuremath{\phi}}})(x)\leq C_{\epsilon}e^{k(P_{K}\phi-\phi)}e^{\epsilon k}\int\left\Vert \Psi_{k}\right\Vert ^{2}e^{-k\mbox{\ensuremath{\phi}}}d\nu\label{eq:exp decay in non-cpt}\end{equation}
\end{lem}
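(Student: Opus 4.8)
The plan is to derive all three assertions from two elementary facts about the envelope $P_{(K,\omega_0)}$. First, for any $\Psi_k\in H^0(X,kL)$ the function $\psi_k:=\frac{1}{k}\log\|\Psi_k\|^2$ lies in $PSH(X,\omega_0)$, so that $\psi_k-c$ is a valid competitor in the supremum defining $P_K\phi$ as soon as $\psi_k\le\phi+c$ on $K$. Second, since $\phi$ is proper on $U$, every bounded $\omega_0$-psh function lies below $\phi$ outside a fixed compact subset of $U$; this is what makes the part of $K$ outside a sufficiently large ball irrelevant to the envelope.

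For $P_{K\cap B_R}\phi=P_K\phi$: monotonicity of the envelope in the defining set gives $P_{K\cap B_R}\phi\ge P_K\phi$ for every $R$, so only the reverse inequality is at issue. Fix a preliminary index $R_0$; since $K\cap B_{R_0}$ is compact and non-pluripolar and $\phi$ is continuous, $v_0:=P_{K\cap B_{R_0}}\phi$ is a bounded $\omega_0$-psh function, say $v_0\le C_0$ on $X$ (note $C_0$ also bounds $P_K\phi$, whence the support of $\mu_{eq}$ is contained in $\{\phi\le C_0\}$). The set $\{x\in U:\phi(x)\le C_0\}$ is compact in $U$ by properness, hence contained in $B_R$ for $R$ large; take such an $R\ge R_0$. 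Then $P_{K\cap B_R}\phi\le v_0\le C_0\le\phi$ on $K\setminus B_R$, and $P_{K\cap B_R}\phi\le\phi$ on $K\cap B_R$ by definition, so $P_{K\cap B_R}\phi\le\phi$ on all of $K$; therefore $P_{K\cap B_R}\phi$ is a competitor for $P_K\phi$ and $P_{K\cap B_R}\phi\le P_K\phi$. By monotonicity the equality then holds for every larger ball, in particular for the ball fixed before the lemma (taken large enough). The one point demanding care is the upper semicontinuous regularization implicit in $P_{K\cap B_R}\phi$, which is harmless under the standing regularity hypothesis on $K$ and, in general, because pluripolar sets are negligible for these envelopes.

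For \ref{eq:sup same as projec} and \ref{eq:exp decay in non-cpt}: put $M_k:=\sup_K\|\Psi_k\|^2 e^{-k\phi}$, so that $\psi_k\le\phi+\frac{1}{k}\log M_k$ on $K$, hence $\psi_k-\frac{1}{k}\log M_k\in PSH(X,\omega_0)$ is $\le\phi$ on $K$ and therefore $\le P_K\phi$ on all of $X$. Exponentiating gives the pointwise bound $\|\Psi_k\|^2 e^{-kP_K\phi}\le M_k$ on $X$; taking the supremum over $K$ and using the reverse inequality $e^{-kP_K\phi}\ge e^{-k\phi}$ on $K$ (which holds everywhere under the regularity hypothesis, and quasi-everywhere in general, enough for a supremum) yields \ref{eq:sup same as projec}. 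For the decay estimate, apply the Bernstein-Markov hypothesis on $(\nu,\phi)$ (the case $u=0$): given $\epsilon>0$ there is $C_\epsilon>0$ with $M_k\le C_\epsilon e^{\epsilon k}\int\|\Psi_k\|^2 e^{-k\phi}\,d\nu$. Inserting this into the pointwise bound $\|\Psi_k\|^2 e^{-kP_K\phi}(x)\le M_k$ and multiplying through by $e^{k(P_K\phi-\phi)(x)}$ gives precisely \ref{eq:exp decay in non-cpt}.

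The only genuinely external input is the assumed Bernstein-Markov inequality for $(\nu,\phi)$; everything else is formal manipulation of envelopes. The step I expect to be the main obstacle is the stabilization of the envelope in the first part: one must avoid the apparent circularity in the choice of $R$ (the bound $C_0$ must be extracted from a fixed preliminary ball $B_{R_0}$ before $R$ is selected), and one must be slightly careful with the quasi-everywhere nature of $P_K\phi\le\phi$ on $K$ and of the usc regularization in the envelope --- standard points that are, in any case, circumvented by the regularity hypothesis on $K$.
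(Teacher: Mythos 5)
Your proposal is correct, and for the second and third assertions it is essentially the paper's argument: the identity \ref{eq:sup same as projec} comes from observing that $\frac{1}{k}\log\|\Psi_k\|^2$ normalized by the sup (resp.\ by $C_\epsilon e^{\epsilon k}\int\|\Psi_k\|^2e^{-k\phi}d\nu$, using the Bernstein--Markov hypothesis with $u=0$) is a competitor in the envelope defining $P_K\phi$, hence lies below $P_K\phi$ on all of $X$; exponentiating gives \ref{eq:sup same as projec} and \ref{eq:exp decay in non-cpt} exactly as in the paper, which simply plugs the $L^1(\nu)$--normalized section directly into the envelope rather than factoring through $M_k$.

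For the first assertion your route is genuinely different from the paper's. The paper localizes the Monge--Amp\`ere measure: it notes that $\mathrm{supp}\, MA(P_K\phi)\subset D=\{P_K\phi\geq\phi\}$, which is compact in $U$, so that $P_{K\cap B_R}\phi\leq\phi\leq P_K\phi$ holds a.e.\ with respect to $MA(P_K\phi)$, and then invokes the domination principle of \cite{begz} to upgrade this to an inequality everywhere. You instead show directly that $P_{K\cap B_R}\phi$ is itself a competitor for $P_K\phi$, via the two--ball trick: a preliminary envelope $v_0=P_{K\cap B_{R_0}}\phi\leq C_0$ supplies a uniform bound, and choosing $B_R\supset\{\phi\leq C_0\}$ forces $P_{K\cap B_R}\phi\leq C_0<\phi$ on $K\setminus B_R$ while $P_{K\cap B_R}\phi\leq\phi$ on $K\cap B_R$ (quasi-everywhere, as you note). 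This avoids both the domination principle and any discussion of where $MA(P_K\phi)$ is supported, at the modest price that the equality is obtained only for $R$ large enough that $B_R\supset\{\phi\leq C_0\}$, whereas the paper gets it for any ball containing $\mathrm{supp}\,\mu_{eq}$; since every subsequent use of the lemma (including the uniformity in $t,u$ needed for the Gateaux differentiability step) only requires the equality for some sufficiently large ball, this difference is immaterial. Two small points you should make explicit: $R_0$ must be chosen large enough that $K\cap B_{R_0}$ is non-pluripolar (possible since $K$ is non-pluripolar and countable unions of pluripolar sets are pluripolar), which is what guarantees that $v_0$ is bounded; and the quasi-everywhere caveats you flag (the usc regularization possibly exceeding $\phi$ on a pluripolar subset of $K$) are handled by the standard fact that pluripolar modifications of the constraint set do not change the regularized envelope --- the same convention the paper itself relies on when it says \ref{eq:sup same as projec} ``follows directly from the definition.''
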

\begin{proof}
By definition $P_{K\cap B_{R}}\phi\geq P_{K}\phi$ and $P_{K\cap B_{R}}\phi\leq\phi$
on the support of $MA(P_{K}\phi).$ Since this latter set is contained
in $D$ (see above) this means that $P_{K\cap B_{R}}\phi\leq P_{K}\phi$
a.e. wrt $MA(P_{K}\phi)$ and hence the inequality holds everywhere
accord to the domination principle (see \cite{begz} for a very general
version of this principle). This shows that $P_{K\cap B_{R}}\phi=P_{K}\phi.$
Next, note that \ref{eq:sup same as projec} follows directly from
the definition of $P_{K}$ (just as in \cite{b-b}). To prove \ref{eq:exp decay in non-cpt}
we set $\psi:=\frac{1}{k}(\log(\left\Vert \Psi_{k}\right\Vert ^{2})(x)/C_{\epsilon}e^{\epsilon k}\int\left\Vert \Psi_{k}\right\Vert ^{2}e^{-k\mbox{\ensuremath{\phi}}}d\nu)$
where $C_{\epsilon}$ is chosen so that the BM-inequality holds wrt
$K,$ i.e. so that $\psi\leq\phi$ on $K.$ Since $\psi$ is a candidate
for the sup defining $P_{K}\phi$ it follows that $\psi\leq P_{K}\phi$
on $X$ which finishes the proof. 
\end{proof}
Let us first prove that the analogue of the first point in Theorem
\ref{thm:[asym-Fekete]} holds and in particular:\begin{equation}
k^{-(n+1)}\log\left\Vert \det\Psi_{k}\right\Vert _{L^{2}(k\phi,\nu^{\otimes N_{k}})}^{2}\rightarrow-\mathcal{E}_{\omega_{0}}(P_{(K,\omega_{0})}(\phi))+\mathcal{E}_{\omega_{0}}(P_{(K_{0},\omega_{0})}\phi_{0}),\label{eq:conv of free energy in non-cpt}\end{equation}
To this end we first apply the previous lemma $N_{k}$ times to $\Psi_{k}^{(j)}(z):=(\det\Psi)(x_{1},x_{2},x_{j-1}x,x_{j},...,x_{N_{k}}))$
for $j=1,..N_{k}$ giving \begin{equation}
(\left\Vert \det\Psi_{k}\right\Vert ^{2}e^{-k\mbox{\ensuremath{\phi}}})(x_{1},...,x_{N})\leq C_{\epsilon}^{N_{k}}e^{\epsilon N_{k}k}e^{-k((\phi-P_{K}\phi)(x_{1},..x_{N})}\int_{X^{N_{k}}}\left\Vert \det\Psi_{k}\right\Vert ^{2}e^{-k\mbox{\ensuremath{\phi}}}\nu^{\otimes N_{k}}\label{eq:upper bound in non-cpt}\end{equation}
Moreover, decomposing $k\phi=(k-k_{0})\phi+k_{0}\phi$ and using $P_{K}v\leq v$
on the support $K$ of $\nu$ gives\[
\int_{X^{N_{k}}}\left\Vert \det\Psi_{k}\right\Vert ^{2}e^{-k\mbox{\ensuremath{\phi}}}\nu^{\otimes N_{k}}\leq C_{\epsilon}^{N_{k}}e^{\epsilon N_{k}k}\sup_{K}(\left\Vert \det\Psi_{k}\right\Vert ^{2}e^{-kP_{K}((1-\frac{k:_{0}}{k})\phi)})(\int_{X}e^{-k_{0}\phi}\nu)^{N_{k}}\]
Using that $P_{K}$ is concave we get $P_{K}((1-\frac{k:_{0}}{k})\phi)\geq((1-\frac{k:_{0}}{k})P_{K}\phi+(\frac{k:_{0}}{k})P_{K}0$
and since $P_{K}\phi$ and $P_{K}0$ are both bounded on $X$ it follows
that \begin{equation}
\int_{X^{N_{k}}}\left\Vert \det\Psi_{k}\right\Vert ^{2}e^{-k\mbox{\ensuremath{\phi}}}\nu^{\otimes N_{k}}\leq C\sup_{K}(\left\Vert \det\Psi_{k}\right\Vert ^{2}e^{-kP_{K}\phi})C'^{N_{k}}e^{\epsilon N_{k}k}\label{eq:lower bound in non-cpt}\end{equation}
Since, by Lemma \ref{lem:pf of non-cpt ldp} $\sup_{K}\left\Vert \det\Psi_{k}\right\Vert ^{2}e^{-k\mbox{\ensuremath{\phi}}}=\sup_{K}(\left\Vert \det\Psi_{k}\right\Vert ^{2}e^{-kP_{K}\mbox{\ensuremath{\phi}}})$
combining \ref{eq:upper bound in non-cpt} and \ref{eq:lower bound in non-cpt}
(and using that $\phi\geq P_{K}\phi)$ gives \begin{equation}
k^{-(n+1)}\log\sup_{K}(\left\Vert \det\Psi_{k}\right\Vert ^{2}e^{-kP_{K}\mbox{\ensuremath{\phi}}})=k^{-(n+1)}\log\left\Vert \det\Psi_{k}\right\Vert _{L^{2}(k\phi,\nu)}^{2}+o(1)\label{eq:local of norm to ball}\end{equation}
Next, by Lemma \ref{lem:pf of non-cpt ldp} we have $P_{K}\phi=P_{K\cap B_{R}}\phi$
and hence we can apply the second point in Theorem \ref{thm:[asym-Fekete]}
to the function $P_{K\cap B_{R}}\phi$ on $X$ and deduce that \ref{eq:conv of free energy in non-cpt}
indeed holds (where we again used that $P_{K}\phi=P_{K\cap B_{R}}\phi,$
but now in the rhs in \ref{eq:conv of free energy in non-cpt}). 

Next, we note that the analogue of Theorem \ref{thm:diff thm} holds:
the functional $u\mapsto\mathcal{E}_{\omega_{0}}(P_{(K,\omega_{0})}(\phi+u)$
is Gateaux differentiable on $C_{b}(K)$ with differential $MA(P_{(K,\omega_{0})}(\phi+u)).$
In other words,

\[
\frac{\mathcal{E}_{\omega_{0}}(P_{(K,\omega_{0})}(\phi+tu)}{dt}_{t=0}=\int_{X}MA(P_{(K,\omega_{0})}\phi)u\]
Indeed, since $t$ stays in a bounded set and $u$ is bounded we may,
as explained above, assume the support of $MA(P_{(K,\omega_{0})}(\phi+tu))$
is contained in $B_{R}$ giving, just as before, that $P_{(K,\omega_{0})}(\phi+tu)=P_{(K\cap B_{R},\omega_{0})}(\phi+tu)$
and hence the differentiability follows from Theorem \ref{thm:diff thm}.

\subsubsection{\label{sub:Application-of-the-ell-g}Exponential tightness and application
of the Ellis-Gärtner theorem}

Given the previous estimates the proof could be obtained by repeating
the arguments in section \ref{sub:A-direct-proof}. But for simplicity
we will instead apply the Ellis-Gärtner theorem to this non-compact
setting. To this end we also need to verify that the corresponding
sequence $\Gamma_{k}$ is \emph{exponentially tight }(wrt the speed
of the expected LDP),\emph{ }i.e. the space $\mathcal{P}(K)$ may
be exhausted by compact subsets $\mathcal{K}_{\alpha}$ for $\alpha>0$
such that $\limsup_{k\rightarrow\infty}\log(\Gamma_{k}(\mathcal{P}(K)-\mathcal{K}_{\alpha})/kN_{k}<\alpha.$
To prove this we let $\mathcal{K}_{\alpha}$ to be the set of all
measures on $\mathcal{P}(K)$ such that $\int(\phi-P_{K}\phi)\mu\leq3\alpha.$
Since $\phi-P_{K}\phi\rightarrow\infty$ at infinity in $U,$ the
set $\mathcal{K}_{\alpha}$ is indeed compact. By definition\[
\Gamma_{k}(\mathcal{P}(K)-\mathcal{K}_{\alpha})=\int_{\{\phi-P_{K}\phi>N_{K}3\alpha\}}\frac{\left\Vert \det\Psi_{k}\right\Vert ^{2}e^{-k\mbox{\ensuremath{\phi}}})(x_{1},...,x_{N})}{\left\Vert \det\Psi_{k}\right\Vert _{L^{2}(k\phi,\nu^{\otimes N_{k}})}^{2}}\nu^{\otimes N_{k}}\]
Now, by \ref{eq:upper bound in non-cpt} the density in the previous
integral may be estimated from above by $C_{\epsilon}^{N_{k}}e^{\epsilon N_{k}k}e^{-k(\phi-P_{K}\phi)}$
for some fixed small $\epsilon>0$ (taken so that $\epsilon<\alpha/2)$
Hence, decomposing \[
e^{-k(\phi-P_{K}\phi)}=e^{-\frac{1}{2}k(\phi-P_{K}\phi)}e^{-\frac{1}{2}k(\phi-P_{K}\phi)}\leq e^{-\frac{1}{2}kN_{K}3\alpha}e^{-\frac{1}{2}k\phi}C^{k}\]
 and integrating wrt $\nu^{\otimes N_{k}}$ (and using that $\phi$
is admissible) finishes the proof of the exponential tightness.

All in all this means that we may apply the abstract Ellis-Gärtner
theorem \ref{thm:(Abstract-G=0000E4rtner-Ellis-Theorem} as before
and obtain an LDP with a rate functional expressed as a Legendre transform
\begin{equation}
H(\mu):=\left(\sup_{C_{b}(K)}\mathcal{E}_{\omega_{0}}(P_{(K,\omega_{0})}(\phi+u)-\int u\mu\right)-C\label{eq:rate funt for ldp in non-cpt as legendre}\end{equation}

\subsubsection*{Properties of the rate functional}

The existence, uniqueness and form of the minimizer of the rate functional
$H$ follows exactly as in the proof of Prop \ref{pro:rate}. The
fact that $H$ is good is a well-known consequence of the LDP and
the exponential tightness obtained above (see Lemma 1.2.18 in \cite{d-z}).
But it could also be proved directly from the the decomposition \ref{eq:decomp of weighted energ},
to whose proof we now turn. We first rewrite the first bracket in
\ref{eq:rate funt for ldp in non-cpt as legendre} as \[
\left(\sup_{\phi'\in\{\phi\}+C_{b}(K)}(\mathcal{E}_{\omega_{0}}(P_{(K,\omega_{0})}(\phi')-\int\phi'\mu)\right)+\int\phi\mu\]
Next we have, just as in the proof of Prop \ref{pro:of energy-1}
that \[
\mathcal{E}_{\omega_{0}}(P_{(K,\omega_{0})}(\phi')-\int\phi'\mu\leq\mathcal{E}_{\omega_{0}}(P_{(K,\omega_{0})}(\phi')-\int P_{(K,\omega_{0})}(\phi')\mu\leq E_{\omega_{0}}(\mu)\]
as $P_{(K,\omega_{0})}\phi'$ is a candidate for the sup defining
$E_{\omega_{0}}(\mu).$ As for the lower bound we take $\phi'_{j}$
smooth functions on $X$ increasing to the unbounded function $\phi'.$
Then $P_{(K,\omega_{0})}(\phi'_{j})\leq P_{(K,\omega_{0})}(\phi')$
and hence \[
\mathcal{E}_{\omega_{0}}(P_{(K,\omega_{0})}(\phi_{j}')-\int\phi_{j}'\mu\leq\mathcal{E}_{\omega_{0}}(P_{(K,\omega_{0})}(\phi')-\int P_{(K,\omega_{0})}(\phi')\mu+\epsilon_{j}\]
 where $\epsilon_{j}\rightarrow0$ by the monotone convergence theorem
of integration theory. Noting that $P_{(K,\omega_{0})}(\phi_{j}')=P_{(\bar{K},\omega_{0})}(\phi_{j}')$
(since by assumption $\bar{K}-K$ is locally pluripolar in $X;$ see
\cite{g-z}) we can apply Prop \ref{pro:of energy-1} to deduce that 

\[
E_{\omega_{0}}(\mu)=\sup_{\phi'\in\{\phi\}+C_{b}(K)}(\mathcal{E}_{\omega_{0}}(P_{(K,\omega_{0})}(\phi')-\int\phi'\mu)\]
finishing the proof of the decomposition formula \ref{eq:decomp of weighted energ-1}.
Note that since $\phi$ is bounded from below it then follows that
$H(\mu)<\infty$ iff $E_{\omega_{0}}(\mu)<\infty$ and $\int\phi\mu<\infty.$ 
\begin{cor}
\label{cor:ldp for vanderm with rate fu}Let $K=\R^{n}$ (or $K=\C^{n})$
and let $\nu$ be the Euclidean measure on $K.$ Assume that $\Phi$
is a function on $K$ with super logarithmic growth at infinity. Then
the push-forward $\Gamma_{k}$ of the Vandermonde measure \[
\tilde{\mu}^{(N_{k})}:=|\Delta^{(N_{k})}|^{2}e^{-k\Phi}\nu^{\otimes N_{k}}\]
 under the normalized map $\delta_{N_{k}}/N_{k}$ (formula \ref{eq:intro random measure})
satisfies a LDP with a good rate functional \begin{equation}
\tilde{E}_{\Phi}(\mu)=\tilde{E}_{0}(\mu)+\int\Phi\mu\label{eq:decomp of weighted energ}\end{equation}
with $\tilde{E}_{0}$ independent of $\Phi$ and $K.$ The rate functional
has a unique minimizer, coinciding with the pluripotential equilibrium
measure $\mu_{eq}:=MA(P_{K}\Phi).$ \end{cor}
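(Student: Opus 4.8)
The plan is to read Corollary \ref{cor:ldp for vanderm with rate fu} off the LDP for non-compact $K$ proved above, exactly as the weighted Vandermonde LDP \ref{eq:rate f for vandermonde in ct case} was read off Theorem \ref{thm:intro large dev} in the compact case. First I would realize $\C^n$ (resp.\ its real points $\R^n\subset\C^n$) as the open set $U\subset X=\P^n$ obtained by deleting the hyperplane at infinity, which is an analytic subvariety and hence locally pluripolar; equip $L=\mathcal{O}(1)$ with the metric of normalized curvature $\omega_0:=dd^c\Phi_0$, $\Phi_0:=\log^+|z|^2$, and use the reference weighted measure $(\nu_0,0)$ with $\nu_0$ the invariant probability measure on the unit torus $T^n$. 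As in section \ref{sub:Remarks-on-normalizations} the monomials are orthonormal on $T^n$ and hence form the reference orthonormal basis of $H^0(X,kL)$, so that $\det\Psi_k=\Delta^{(N_k)}$ and the pointwise norm is $\left\Vert\det\Psi_k\right\Vert^2=|\Delta^{(N_k)}|^2e^{-k\Phi_0}$. Putting $\phi:=\Phi-\Phi_0$ one then has $\tilde\mu^{(N_k)}=|\Delta^{(N_k)}|^2e^{-k\Phi}\nu^{\otimes N_k}=\left\Vert\det\Psi_k\right\Vert^2e^{-k\phi}\nu^{\otimes N_k}=\mathcal{Z}_k\,\mu_{k\phi}^{(N_k)}$, i.e.\ the measure of the corollary is the un-normalized version of the one in the non-compact LDP theorem.

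Next I would verify the hypotheses of that theorem for the pair $(\nu,\phi)$. Admissibility is immediate: the super logarithmic growth \ref{eq:ass on growth of phi} gives $\phi=\Phi-\Phi_0\geq\epsilon\log|z|^2\to\infty$ as $|z|\to\infty$ in $U$, so $\phi$ is proper, and $\int e^{-k\phi}\,\nu=\int e^{-k\Phi}|z|^{2k}\,d\lambda<\infty$ once $k\epsilon>n$. The tilted Bernstein-Markov property of $\nu$ (Lebesgue measure) with respect to $(K,\omega_0+dd^c(\phi+u))$ for every $u\in C_b(K)$ holds for $K=\C^n$ by classical potential theory and for $K=\R^n$ by Bloom-Levenberg \cite{bl0}; $K$ is non-pluripolar (recall $\R^n$ is non-pluripolar in $\C^n$); and the regularity requirement $P_{(K,\omega_0)}(\phi+u)\in C^0(X)$ is automatic in these two cases, as noted after the statement of the theorem. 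Hence that theorem applies and the laws of $\mu_{k\phi}^{(N_k)}$ satisfy an LDP at speed $Vk^{n+1}$ with good rate functional $H(\mu)=E_{\omega_0}(\mu)+\int\phi\,\mu-C$ (formula \ref{eq:decomp of weighted energ-1}), which vanishes exactly at the unique minimizer $\mu_{eq}=MA(P_{(K,\omega_0)}\phi)$, with $H(\mu)<\infty$ iff $E_{\omega_0}(\mu)<\infty$ and $\int\phi\,\mu<\infty$.

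Finally I would unwind the normalization. Since $\tilde\mu^{(N_k)}=\mathcal{Z}_k\,\mu_{k\phi}^{(N_k)}$ with, by \ref{eq:conv of free energy in non-cpt}, $\frac{1}{k^{n+1}}\log\mathcal{Z}_k\to\mathcal{E}_{\omega_0}(P_{(K_0,\omega_0)}0)-\mathcal{E}_{\omega_0}(P_{(K,\omega_0)}\phi)$, the push-forward $\Gamma_k$ of $\tilde\mu^{(N_k)}$ satisfies an LDP at speed $Vk^{n+1}$ with good rate functional $\tilde E_\Phi(\mu)=H(\mu)-\mathcal{E}_{\omega_0}(P_{(K_0,\omega_0)}0)+\mathcal{E}_{\omega_0}(P_{(K,\omega_0)}\phi)$. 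But $H$ takes the value $0$ at its minimizer, so $C=\min_\mu\bigl(E_{\omega_0}(\mu)+\int\phi\,\mu\bigr)=\mathcal{E}_{\omega_0}(P_{(K,\omega_0)}\phi)$ — the weighted analogue of the identities \ref{eq:capacity as inf}--\ref{eq:cap as enrgy with proj}, which follows from the variational description of $E_{\omega_0}$ in Proposition \ref{pro:prop of s} (or, a posteriori, by applying the LDP with $\mathcal{F}=\mathcal{G}=\mathcal{P}(K)$). Substituting $C$, the two envelope terms cancel:
\[
\tilde E_\Phi(\mu)=E_{\omega_0}(\mu)+\int\phi\,\mu-\mathcal{E}_{\omega_0}(P_{(K_0,\omega_0)}0)=\tilde E_0(\mu)+\int\Phi\,\mu,\qquad \tilde E_0(\mu):=E_{\omega_0}(\mu)-\int\Phi_0\,\mu-\mathcal{E}_{\omega_0}(P_{(K_0,\omega_0)}0).
\]
Since $\Phi_0$, $\omega_0$ and the reference $(K_0,\phi_0)=(T^n,0)$ are fixed once and for all, $\tilde E_0$ depends only on $\mu$, hence is independent of $\Phi$ and $K$ (and it reduces to the classical logarithmic energy \ref{eq:unweighted energy as log energy n is one} when $n=1$). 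The additive constant does not change the minimizer, and $MA(P_{(K,\omega_0)}\phi)=MA(P_K\Phi)$ because $P_K\Phi=\Phi_0+P_{(K,\omega_0)}(\Phi-\Phi_0)$ forces $dd^cP_K\Phi=\omega_0+dd^cP_{(K,\omega_0)}\phi$; this identifies the unique minimizer with $\mu_{eq}:=MA(P_K\Phi)$. The only genuinely non-formal ingredient is the tilted Bernstein-Markov property of Lebesgue measure on $\R^n$ and $\C^n$, which is taken from \cite{bl0} and classical potential theory; the one point requiring care is the bookkeeping above, whose upshot is the exact cancellation of the $K$-dependent envelope $\mathcal{E}_{\omega_0}(P_{(K,\omega_0)}\phi)$ between the normalization constant $C$ of the rate functional and the free-energy limit of $\log\mathcal{Z}_k$ — precisely what makes $\tilde E_0$ universal.
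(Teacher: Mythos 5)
Your overall route is the paper's own: realize $\R^{n}$ (resp. $\C^{n}$) inside $X=\P^{n}$, set $\phi=\Phi-\Phi_{0}$ with the torus reference basis so that $\det\Psi_{k}=\Delta^{(N_{k})}$, verify admissibility and the hypotheses of the non-compact LDP theorem, and then unwind the normalization. Your final bookkeeping is correct and in fact more explicit than the paper's: you identify the constant in \ref{eq:decomp of weighted energ-1} as $C=\mathcal{E}_{\omega_{0}}(P_{(K,\omega_{0})}\phi)$, observe that it cancels against the $K$-dependent part of the free-energy limit of $\log\mathcal{Z}_{k}$, and thereby obtain $\tilde{E}_{0}(\mu)=E_{\omega_{0}}(\mu)-\int\Phi_{0}\mu-\mathcal{E}_{\omega_{0}}(P_{(K_{0},\omega_{0})}0)$, which is manifestly independent of $\Phi$ and $K$; the paper compresses this into the terse definition $\tilde{E}_{0}:=E_{\omega_{0}}(\mu)-\int\Phi_{0}\mu-C$. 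The identification of the minimizer with $MA(P_{K}\Phi)$ via $P_{K}\Phi=\Phi_{0}+P_{(K,\omega_{0})}(\Phi-\Phi_{0})$ is also fine.

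The one step you under-justify is precisely where the paper's proof spends its effort: the Bernstein--Markov property of Lebesgue measure for the weights $\phi+u$, $u\in C_{b}(K)$, with respect to the \emph{unbounded} set $K$. The sources you invoke (\cite{bl0} and classical potential theory) give the BM inequality on compact sets such as $K\cap B_{R}$; they do not by themselves bound $\sup_{K}\left\Vert s_{k}\right\Vert ^{2}e^{-k(\phi+u)}$ over all of $\R^{n}$ or $\C^{n}$ by the weighted $L^{2}$-norm. The paper upgrades the compact-ball statement to $K$ by the argument of Lemma \ref{lem:pf of non-cpt ldp}: taking $\psi:=\frac{1}{k}\log$ of the section normalized by its weighted $L^{2}$-norm over $K\cap B_{R}$, the local BM inequality gives $\psi\leq\phi$ on $K\cap B_{R}$, hence on the support of $MA(P_{K}\phi)$ once $R\gg1$, and the domination principle then yields $\psi\leq P_{K}\phi\leq\phi$ on all of $K$, which is the inequality \ref{eq:exp decay in non-cpt} and hence the BM property with respect to $K$ itself. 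Some such localization, exploiting the superlogarithmic growth of $\Phi$ (equivalently, the compactness of the contact set $\{P_{K}\phi\geq\phi\}$), must be supplied rather than quoted as classical; once it is, the rest of your argument goes through as written.
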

\begin{proof}
Let $(X,L,\omega_{0}):=(\P^{n},\mathcal{O}(1),\omega_{0})$ as in
the beginning of section \ref{sec:Examples} and write $\phi=\Phi-\log^{+}|z|^{2}:=\Phi-\Phi_{0}$
on $U:=\C^{n}.$ Then we have $\left\Vert \cdot\right\Vert ^{2}e^{-k\phi}=|\cdot|^{2}e^{-k\Phi}.$
To see that the BM-property is satisfied we first note that replacing
$K$ with $K\cap B_{R}$ for an ordinary ball of radius $R$ the $(\nu,\phi)$
has the BM-property wrt $K\cap B_{R}$ for \emph{any} $\phi\in C^{0}(K)$
(and hence for $\phi+u$ as above), as is well-known \cite{bl0}.
Next we will apply the arguments in the proof of Lemma \ref{lem:pf of non-cpt ldp}:
fixing $\phi$ and defining $\psi$ as in that lemma hence gives $\psi\leq\phi$
on $B_{R}$ and in particular $\psi\leq\phi$ on the support of $MA(P_{K}\phi)$
if $R>>1.$ But then it follows from the domination principle that
$\psi\leq P_{K}\phi$ on all of $X$ and hence the inequality \ref{eq:exp decay in non-cpt}
holds (even when restricting $\nu$ to $B_{R}).$ Since, $P_{K}\phi\leq\phi$
on all of $X$ this finishes the proof of the BM-property wrt $K$.
Hence we may apply the previous theorem to obtain an LDP with a rate
functional of the form \ref{eq:decomp of weighted energ-1} for some
constant $C.$ Finally, we may simply define $\tilde{E}_{0}(\mu):=E_{\omega_{0}}(\mu)-\int\Phi_{0}\mu-C$
so that formula \ref{eq:decomp of weighted energ} holds, as desired. 
\end{proof}
When $n=1$ the functional $\tilde{E}_{0}(\mu)$ in \ref{eq:decomp of weighted energ}
can still be written in the classical form \ref{eq:unweighted energy as log energy n is one},
even if $\mu$ does not have compact support in $\C(=U)$. This follows
for example from the latter case by using the following lemma valid
in any dimension: 
\begin{lem}
Let $\mu$ be a measure such that $E(\mu)<\infty$ and write $\mu_{R}:=1_{B_{R}}$$\mu/\int_{X}\mu_{R}$,
where $B_{R}$ is a sequence of sets such that $1_{B_{R}}\mu\rightarrow\mu.$
Then $E_{\omega_{0}}(\mu_{R})\rightarrow E_{\omega_{0}}(\mu)$ as
$R\rightarrow\infty.$ \end{lem}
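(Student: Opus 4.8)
The plan is to establish the two one‑sided bounds $\liminf_{R}E_{\omega_{0}}(\mu_{R})\ge E_{\omega_{0}}(\mu)$ and $\limsup_{R}E_{\omega_{0}}(\mu_{R})\le E_{\omega_{0}}(\mu)$ separately. Write $c_{R}:=\int_{X}1_{B_{R}}\mu$, so that $c_{R}\uparrow 1$ and $\mu_{R}\to\mu$ weakly in $\mathcal{P}(X)$ (for bounded continuous $g$ one has $\int g\,\mu_{R}=\frac{1}{c_{R}}\int_{B_{R}}g\,\mu\to\int g\,\mu$). Since, by \ref{eq:def of e as sup}, $E_{\omega_{0}}$ is a supremum of weakly lower semi‑continuous functionals $\mu\mapsto\mathcal{E}_{\omega_{0}}(\phi)-\langle\phi,\mu\rangle$, it is lower semi‑continuous on $\mathcal{P}(X)$ (cf.\ Proposition \ref{pro:prop of s}), which gives the first inequality at once. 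Moreover each $\mu_{R}$ has finite energy: if $\psi\in\mathcal{E}^{1}(X,\omega_{0})$ then $\psi$ is bounded above and $\psi\in L^{1}(X,\mu)$ by Theorem \ref{thm:var sol of ma}, hence $\langle\psi,\mu_{R}\rangle=\frac{1}{c_{R}}\int_{B_{R}}\psi\,\mu>-\infty$; so $E_{\omega_{0}}(\mu_{R})<\infty$ and $\mu_{R}$ admits a potential $\phi_{R}\in\mathcal{E}^{1}(X,\omega_{0})$ with $MA(\phi_{R})=\mu_{R}$, which I normalize by $\sup_{X}\phi_{R}=0$ (and likewise $\sup_{X}\phi_{\mu}=0$).

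For the upper bound I would use $\phi_{R}$ as a competitor in the supremum \ref{eq:def of e as sup} defining $E_{\omega_{0}}(\mu)$, so that $\mathcal{E}_{\omega_{0}}(\phi_{R})-\langle\phi_{R},\mu\rangle\le E_{\omega_{0}}(\mu)$. Subtracting $\langle\phi_{R},\mu_{R}\rangle$, using $\mathcal{E}_{\omega_{0}}(\phi_{R})-\langle\phi_{R},\mu_{R}\rangle=E_{\omega_{0}}(\mu_{R})$ (formula \ref{eq:energy in terms of pot}), writing $\mu-\mu_{R}=1_{X\setminus B_{R}}\mu-\varepsilon_{R}1_{B_{R}}\mu$ with $\varepsilon_{R}:=\frac{1}{c_{R}}-1\to 0$, and exploiting $\phi_{R}\le 0$, one gets
\[
E_{\omega_{0}}(\mu_{R})\le E_{\omega_{0}}(\mu)+\int_{X\setminus B_{R}}\phi_{R}\,\mu+\varepsilon_{R}\int_{B_{R}}(-\phi_{R})\,\mu\le E_{\omega_{0}}(\mu)+\varepsilon_{R}\int_{B_{R}}(-\phi_{R})\,\mu .
\]
Because $\int_{B_{R}}(-\phi_{R})\,\mu=c_{R}\int_{X}(-\phi_{R})\,\mu_{R}=c_{R}\bigl(E_{\omega_{0}}(\mu_{R})-\mathcal{E}_{\omega_{0}}(\phi_{R})\bigr)$, the error term is exactly $(1-c_{R})\bigl(E_{\omega_{0}}(\mu_{R})-\mathcal{E}_{\omega_{0}}(\phi_{R})\bigr)$. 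Thus the whole upper bound reduces to showing that $E_{\omega_{0}}(\mu_{R})$ stays bounded above and $\mathcal{E}_{\omega_{0}}(\phi_{R})$ bounded below, uniformly in $R$; equivalently, that the energy $\int_{X}(-\phi_{R})\,MA(\phi_{R})$ of the potentials does not blow up as $R\to\infty$.

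This last uniform boundedness is the heart of the matter and the step I expect to be the main obstacle: it says that the potentials $\phi_{R}$ do not degenerate in energy, and here the special structure of $\mu_{R}$ must be used — namely $\mu_{R}=f_{R}\mu$ with $0\le f_{R}\le 1/c_{R_{0}}$ for $R\ge R_{0}$ and $f_{R}\to 1$ in $L^{1}(X,\mu)$, so that $\mu_{R}$ is dominated by a fixed multiple of the finite‑energy measure $\mu$ and converges to $\mu$ ``strongly''. Granting, from the finite‑energy machinery of \cite{begz,bbgz} (the stability/convergence‑in‑energy results for the Monge‑Amp\`ere equation in $\mathcal{E}^{1}(X,\omega_{0})$, e.g.\ Theorem 4.7 in \cite{bbgz}, respectively the continuity of $E_{\omega_{0}}$ along such sequences in \cite{begz}), that $\sup_{R}E_{\omega_{0}}(\mu_{R})<\infty$ and in fact $\phi_{R}\to\phi_{\mu}$ in $L^{1}(X,\mu)$ with $\mathcal{E}_{\omega_{0}}(\phi_{R})\to\mathcal{E}_{\omega_{0}}(\phi_{\mu})$, the error term $(1-c_{R})\bigl(E_{\omega_{0}}(\mu_{R})-\mathcal{E}_{\omega_{0}}(\phi_{R})\bigr)$ tends to $0$; the displayed inequality then gives $\limsup_{R}E_{\omega_{0}}(\mu_{R})\le E_{\omega_{0}}(\mu)$, which together with the lower semi‑continuity bound yields $E_{\omega_{0}}(\mu_{R})\to E_{\omega_{0}}(\mu)$. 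As a variant, once $\phi_{R}\to\phi_{\mu}$ in $L^{1}(X,\mu)$ and $\mathcal{E}_{\omega_{0}}(\phi_{R})\to\mathcal{E}_{\omega_{0}}(\phi_{\mu})$ are available, one may instead conclude directly from $E_{\omega_{0}}(\mu_{R})=\mathcal{E}_{\omega_{0}}(\phi_{R})-\frac{1}{c_{R}}\int_{B_{R}}\phi_{R}\,\mu$ and $\int_{B_{R}}\phi_{R}\,\mu\to\int_{X}\phi_{\mu}\,\mu$, the latter following from the $L^{1}(\mu)$‑convergence and the monotone convergence $\int_{X\setminus B_{R}}(-\phi_{\mu})\,\mu\to 0$ of the tails.
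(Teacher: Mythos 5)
Your reduction is sound and in fact mirrors the paper's own strategy (work with the potentials $\phi_{R}$ of $\mu_{R}$, use the variational/competitor inequality, and finish by dominated convergence using $\mu_{R}\le C\mu$ and $\phi_{R}\to\phi_{\mu}$ in $L^{1}(\mu)$), but the step you explicitly leave to be ``granted'' --- the uniform bound $\sup_{R}\bigl(-\mathcal{E}_{\omega_{0}}(\phi_{R})\bigr)<\infty$, equivalently $\sup_{R}E_{\omega_{0}}(\mu_{R})<\infty$, together with $\phi_{R}\to\phi_{\mu}$ in $L^{1}(\mu)$ and $\mathcal{E}_{\omega_{0}}(\phi_{R})\to\mathcal{E}_{\omega_{0}}(\phi_{\mu})$ --- is precisely the content of the lemma and cannot be quoted as a black box. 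Theorem 4.7 in \cite{bbgz} (Theorem \ref{thm:var sol of ma} here) only applies once you know that $(\phi_{R})$ is an asymptotically maximizing sequence for $\phi\mapsto\mathcal{E}_{\omega_{0}}(\phi)-\int\phi\,\mu$, and establishing that is exactly what requires the uniform bound you are trying to produce; neither \cite{begz} nor \cite{bbgz} offers a ready-made ``continuity (or boundedness) of $E$ under domination $\mu_{R}\le C\mu$'' statement you could substitute. As written, the argument is therefore circular at its crucial point.

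The missing ingredient, and the way the paper closes the loop, is the a priori estimate of Prop.\ 3.4 in \cite{bbgz}: since $\mu$ has finite energy, for sup-normalized $\psi\in\mathcal{E}^{1}(X,\omega_{0})$ one has $\bigl|\int\psi\,\mu\bigr|\le C_{\mu}\max\bigl(1,|\mathcal{E}_{\omega_{0}}(\psi)|^{1/2}\bigr)$. Feeding this into your displayed inequality (or into the maximality inequality $\mathcal{E}(\phi_{R})-\int\phi_{R}\mu_{R}\ge\mathcal{E}(\phi_{\mu})-\int\phi_{\mu}\mu_{R}$, rewritten with the error term $\delta_{R}\int\phi_{R}\mu$, $\delta_{R}\to0$, as in the paper) yields $-\mathcal{E}_{\omega_{0}}(\phi_{R})\le C'+C''\,|\mathcal{E}_{\omega_{0}}(\phi_{R})|^{1/2}$, which forces the uniform energy bound; the same inequality then gives $\mathcal{E}(\phi_{R})-\int\phi_{R}\mu\to E_{\omega_{0}}(\mu)$, i.e.\ $(\phi_{R})$ is asymptotically maximizing, and only now does Theorem \ref{thm:var sol of ma} deliver $\phi_{R}\to\phi_{\mu}$ in $L^{1}(\mu)$ and $\mathcal{E}(\phi_{R})\to\mathcal{E}(\phi_{\mu})$. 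With that in hand your concluding step (dominated convergence via $\mu_{R}\le C\mu$) is fine and coincides with the paper's; your lower bound via lower semi-continuity is correct but becomes superfluous once the full convergence is obtained this way.
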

\begin{proof}
Let $\phi_{\mu_{R}}$ be the potential of the probability measure
$\mu_{R}$ normalized so that $\sup_{X}\phi_{\mu_{R}}=0.$ Now by
the variational property of $\phi_{\mu_{R}}$ we have \[
\mathcal{E}(\phi_{\mu_{R}})-\int\phi_{\mu_{R}}\mu\geq\left(\mathcal{E}(\phi_{\mu})-\int\phi_{\mu}\mu\right)+\delta_{R}\int\phi_{\mu_{R}}\mu\]
where $\delta_{R}\rightarrow0.$ Moreover, by Prop 3.4 in \cite{bbgz}
the following estimate holds (using that $\mu$ has finite energy):
$|\int\phi_{\mu_{R}}\mu|\leq C|\mathcal{E}(\phi_{\mu_{R}})|^{1/2}.$
Hence, the inequality above forces $-\mathcal{E}(\phi_{\mu_{R}})\leq C$
and as consequence $\phi_{\mu_{R}}$ is an asymptotic maximizer in
the sense of Theorem \ref{thm:var sol of ma}. Hence, the latter theorem
gives $\mathcal{E}(\phi_{\mu_{R}})\rightarrow\mathcal{E}(\phi_{\mu})$
and $\int\phi_{\mu_{R}}\mu\rightarrow\int\phi_{\mu}\mu.$ Using that
$\mu_{R}=1_{B_{R}}\mu(1+\delta_{R})\leq C\mu$ and dominated convergence
hence finally gives $E(\mu_{R})\rightarrow E(\mu).$
\end{proof}

\subsubsection{\label{sub:Applications-to-sections}Applications to sections vanishing
along a given hypersurface and Laplacian growth}

Let $Z$ by a smooth hypersurface in $X$ Let $H_{kZ}$ be the subspace
of $H^{0}(X,kL)$ consisting of all sections vanishing to order $k$
along $Z.$ Then any continuous Hermitian metric $\left\Vert \dot{}\right\Vert $
(with curvature form $\omega)$ and a volume form $\nu$ on $X$ induce
by restriction, an inner product on the subspace $H_{kZ}$ (that will
be non-degenerate under the assumptions below). Hence, we can associate
a sequence of determinantal point-processes to the corresponding sequence
of Hilbert spaces. We will assume that the line bundle $L-L_{Z}$
is ample, where $(L_{Z},s_{Z})$ is the line bundle with a holomorphic
section $s_{Z}$ cutting out $Z,$ i.e. $Z=\{s_{z}=0\}.$ 
\begin{cor}
\label{cor:ldp for vanishing}The laws of the normalized empirical
measure of the determinantal point process associated to $H_{kZ}$
satisfy a LDP on $X-Z$ with a good rate functional whose unique minimizer
is compactly supported on $X-Z.$\end{cor}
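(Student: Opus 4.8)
The plan is to reduce the assertion to the large deviation principle for non-compact $K$ proved in Section~\ref{sub:Proof-ldp-non-cpt}, using the defining section $s_Z$ to divide out the prescribed vanishing. Fix a smooth Hermitian metric $h_Z$ on $L_Z$ and put
\[
\psi_Z:=-\log\|s_Z\|^2_{h_Z},
\]
a function which is smooth on $U:=X-Z$, bounded from below on the compact manifold $X$, and proper on $U$ (indeed $\|s_Z\|_{h_Z}\to 0$ on approach to $Z$, so $\psi_Z\to+\infty$ there); since $Z=\{s_Z=0\}$ is a hypersurface, $X-U=Z$ is locally pluripolar. Multiplication by $s_Z^{k}$ identifies $H^0(X,k(L-L_Z))$ with $H_{kZ}$, so in particular $N_k=\dim H_{kZ}=\dim H^0(X,k(L-L_Z))$, which grows like $k^n$ because $L-L_Z$ is ample. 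Writing the given metric on $kL$ as $h_Z^{\otimes k}$ times the induced continuous metric on $k(L-L_Z)$, and denoting by $\omega_0$ the normalized curvature of the latter, one has $\|s_Z^{k}t\|^2_{kL}=\|t\|^2\,e^{-k\psi_Z}$ on $U$ for $t\in H^0(X,k(L-L_Z))$; consequently, for a basis of $H_{kZ}$ corresponding to a basis $(t^{(i)})$ of $H^0(X,k(L-L_Z))$,
\[
\|\det\Psi_k\|^2(x_1,\dots,x_{N_k})=\Bigl(\prod_{j}e^{-k\psi_Z(x_j)}\Bigr)\,\|\det T_k\|^2(x_1,\dots,x_{N_k}),
\]
where $\det T_k$ is the Slater determinant of the $t^{(i)}$. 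Hence the determinantal point process attached to $H_{kZ}$ (with the metric $\|\cdot\|$ and the volume form $\nu$) is precisely the one attached to the ample bundle $L-L_Z$ with reference curvature $\omega_0$, measure $\nu$, and continuous proper weight $\phi:=\psi_Z$ on $U$ — that is, the set-up of the non-compact LDP of Section~\ref{sub:Proof-ldp-non-cpt}.

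It then remains to check the hypotheses of that theorem for the admissible pair $(\nu,\psi_Z)$ on $U$. Admissibility is immediate: $\psi_Z$ is proper on $U$, and $\int e^{-k\psi_Z}\,\nu=\int\|s_Z\|^{2k}_{h_Z}\,\nu<\infty$ for every $k$ since $\|s_Z\|_{h_Z}$ is bounded on $X$. The support $K$ of $\nu$ in $U$ is all of $U$ (a volume form charges every open set), which is non-pluripolar, and $K=U$ satisfies the mild regularity used in the proof of that theorem (or one removes it by approximation as there). Finally, the Bernstein--Markov property \ref{eq:bm with phi} wrt $(K,\omega_0,\psi_Z+u)$ for $u\in C_b(U)$ is verified exactly as in the proof of Corollary~\ref{cor:ldp for vanderm with rate fu}: one works on the compact exhaustion $B_R:=\{\psi_Z\le R\}\subset U$, uses the classical fact that the volume form $\nu$ restricted to $B_R$ has the Bernstein--Markov property wrt any continuous weight on $B_R$ (here $\psi_Z+u$, which is continuous on $U$; cf.\ \cite{bl0}), and then upgrades this to a global inequality on $U$ via the domination principle, choosing $R$ so large that $B_R$ contains the support of $MA\!\left(P_{(U,\omega_0)}(\psi_Z+u)\right)$ — which is compact in $U$ because $\psi_Z+u$ is proper on $U$. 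This is the argument already used in Lemma~\ref{lem:pf of non-cpt ldp} and Corollary~\ref{cor:ldp for vanderm with rate fu}.

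Granting this, the non-compact LDP of Section~\ref{sub:Proof-ldp-non-cpt} applies directly: the laws of the normalized empirical measure of the process attached to $H_{kZ}$ satisfy an LDP on $\mathcal{P}(U)=\mathcal{P}(X-Z)$, at speed $V'k^{n+1}$ (with $V'$ the volume of $L-L_Z$), with the good rate functional $H(\mu)=E_{\omega_0}(\mu)+\int_X\psi_Z\,\mu-C$; its unique minimizer is the equilibrium measure $\mu_{eq}=MA\!\left(P_{(U,\omega_0)}\psi_Z\right)$, and by the localization argument in the proof of that theorem the support of $\mu_{eq}$ is contained in $\{P_{(U,\omega_0)}\psi_Z\ge\psi_Z\}$, which is compact in $U$ since $\psi_Z\to+\infty$ at $Z$ while $P_{(U,\omega_0)}\psi_Z$ is bounded on $X$. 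Thus $\mu_{eq}$ is compactly supported in $X-Z$, which is the assertion. The only genuinely delicate point in this scheme is the Bernstein--Markov bookkeeping of the second paragraph: even though $\nu$ is a volume form on all of $X$ and the effective weight $\psi_Z$ blows up along $Z$, one must verify that $\|\cdot\|^2e^{-k\psi_Z}\nu$ behaves as an honest weighted measure on $U$; everything else is a formal translation through the isomorphism $s_k\mapsto s_k/s_Z^{k}$.
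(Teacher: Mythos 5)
Your proposal is correct and follows essentially the same route as the paper: reduce to the non-compact LDP of Section \ref{sub:Proof-ldp-non-cpt} via the unitary isomorphism $\Psi_k\mapsto\Psi_k/s_Z^{k}$ onto $H^0(X,k(L-L_Z))$ with the singular weight $-\log\|s_Z\|^2$ (proper on $U=X-Z$), verifying the Bernstein--Markov hypothesis exactly as in Corollary \ref{cor:ldp for vanderm with rate fu}. Your write-up merely fills in the bookkeeping (admissibility, the exhaustion $B_R$, the compactness of the support of the minimizer via $\{P\psi_Z\geq\psi_Z\}$) that the paper leaves implicit, and it silently corrects the paper's $s_Z$ to $s_Z^{k}$.
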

\begin{proof}
The map $\Psi_{k}\mapsto\Psi_{k}/s_{Z}$ establishes a unitary isomorphism
between the Hilbert space $H_{kZ}$ above and the vector space $(H^{0}(X,k(L-L_{Z}))$
with the inner product induced by the volume form $\nu$ and the (non-continuous)
metric $\left\Vert \dot{}\right\Vert e^{\log|s_{Z}|}$ on the line
bundle $L-L_{Z}$ over $X$ (the metric blows up along $Z).$ Since,
$\nu$ satisfies the BM-property wrt any domain in $X$ (as is proved
just as in the proof of the previous corollary) the LDP follows from
the previous theorem applied to the line bundle $L-L_{Z}$ and the
set $U.$ 
\end{proof}
Of course, the previous LDP holds for much more general measures $\nu.$
For example we can take $\nu=fdV_{X}$ where $f$ is continuous and
positive on $X-Z.$ Then we just have to assume that the corresponding
inner products on $H_{k}$ are finite for $k>>1.$ The proof also
applies to the more general setting when $Z$ is replaced by a simple
normal crossings divisor \[
Z:=t_{1}Z_{1}+\cdots t_{m}Z_{m}\]
 where $t_{i}\geq0$ if we let $H_{k}$ be defined by taking the vanishing
order on $Z_{i}$ to be the round-down of $kt_{i}.$ For example,
when $Z=t_{1}Z_{1}$ and $s_{Z_{1}}$ is a section of $L$ then we
can take $t_{1}\in[0,1[$ so that $t_{1}=0$ corresponds to the situation
in the previous sections where $H_{kZ}$ is the full Hilbert space
$H^{0}(X,kL)$ and as $t_{1}\rightarrow1$ the scaled dimension of
$H_{kZ}$ tends to zero. Hence, physically $t_{1}$ plays the role
of the {}``filling fraction'' familiar from the Quantum Hall effect
when $n=1$ (where $X$ is the Riemann sphere $\P^{1}$ and $Z_{1}$
is the point at infinity). This latter case has also been studied
extensively from the point of view of Laplacian growth (for example
in connection to the Hele-Shaw flow); see \cite{za,h-m} and references
therein. Here we just briefly point out the relation to Laplacian
growth on a Riemann surface of arbitrary genus: 
\begin{prop}
Fix a point $Z$ on a compact Riemann surface $(X,\omega)$ equipped
with a real two-form $\omega$ such that $\int_{X}\omega=1.$ For
$t\in]0,1[$ the equilibrium measure associated to $(X,\omega,tZ)$
(i.e. the minimizer appearing in \ref{cor:ldp for vanishing}) may
be written as \begin{equation}
\mu_{t}:=1_{D_{t}}\omega,\label{eq:reg of equi meas}\end{equation}
for a closed set $D_{t}.$ The right derivative of $\mu_{t}$ exists
and its value at $t=t_{0}$ coincides with the equilibrium measure
of the (un-) weighted compact set $(D_{t_{0}},0).$ In particular,
if $D_{t_{0}}$ is a domain with piecewise smooth boundary, then this
latter measure is supported on $\partial D_{t_{0}}$ and its density
is the normal derivative of the Green function of $(D_{t_{0}},0)$
with a pole at $Z.$\end{prop}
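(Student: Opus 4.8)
The plan is to realize $\mu_{t}$ as the Monge--Amp\`ere measure of an obstacle solution and then differentiate that obstacle problem in $t$. Let $\psi$ be the function on $X$ obtained from the section $s_{Z}$ cutting out $Z$, normalized so that $dd^{c}\psi=\omega-\delta_{Z}$ and $\psi\ge 0$; it has a single logarithmic $+\infty$ pole at $Z$ and is smooth elsewhere. Via the unitary isomorphism in the proof of Corollary~\ref{cor:ldp for vanishing}, the ensemble attached to (asymptotic) vanishing order $t$ at $Z$ carries the weight $\phi_{t}:=t\psi$, and its equilibrium measure is $\mu_{t}=MA_{\omega}(u_{t})$ with $u_{t}:=P_{(X,\omega)}(\phi_{t})$. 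Since $n=1$, the description of equilibrium measures recalled in the introduction (from \cite{berm1}) gives $\mu_{t}=1_{D_{t}}\omega$ with $D_{t}:=\{u_{t}=\phi_{t}\}$ the closed contact set: on $D_{t}$ one has $\omega+dd^{c}u_{t}=\omega+t\,dd^{c}\psi=\omega$ because the pole sits in $\Omega_{t}:=X\setminus D_{t}$, while on $\Omega_{t}$ the function $u_{t}$ is $\omega$-harmonic, i.e.\ $\omega+dd^{c}u_{t}=0$. This settles \eqref{eq:reg of equi meas} (up to the obvious $t$-dependent normalizing constant absorbed in the notation).

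\textbf{Existence of the right derivative.} First observe that $t\mapsto u_{t}$ is concave: $P_{(X,\omega)}$ is concave in its argument (if $v\le\chi$ and $v'\le\chi'$ are $\omega$-psh then $sv+(1-s)v'$ competes for $P_{(X,\omega)}(s\chi+(1-s)\chi')$) and $\phi_{t}$ is affine in $t$; moreover $\psi\ge 0$ makes $t\mapsto u_{t}$ nondecreasing and $t\mapsto\Omega_{t}$ nondecreasing. Concavity forces the right derivative $\dot{u}^{+}_{t_{0}}:=\lim_{\varepsilon\downarrow 0}\varepsilon^{-1}(u_{t_{0}+\varepsilon}-u_{t_{0}})$ to exist as a monotone limit of difference quotients, hence as a bounded function on $X$.

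\textbf{Identification of $\dot{u}^{+}_{t_{0}}$ and of $\dot{\mu}^{+}_{t_{0}}$.} On the interior of $D_{t_{0}}$ one has $u_{t}=t\psi$ for all $t$ near $t_{0}$, so $\dot{u}^{+}_{t_{0}}=\psi$ there. On $\Omega_{t_{0}}$ both $u_{t_{0}}$ and $u_{t_{0}+\varepsilon}$ satisfy $\omega+dd^{c}(\cdot)=0$, hence $u_{t_{0}+\varepsilon}-u_{t_{0}}$ is \emph{harmonic} there, with boundary trace on $\partial D_{t_{0}}$ equal to $\varepsilon\psi+o(\varepsilon)$ (using $u_{t}=t\psi$ on $\partial D_{t}$ together with Lipschitz control of $t\mapsto D_{t}$, which comes from $u_{t}\le t\psi$, monotonicity and concavity). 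Dividing by $\varepsilon$ and letting $\varepsilon\downarrow 0$, $\dot{u}^{+}_{t_{0}}$ equals $\psi$ on $D_{t_{0}}$ and the harmonic extension of $\psi|_{\partial D_{t_{0}}}$ into $\Omega_{t_{0}}$; equivalently, $\dot{u}^{+}_{t_{0}}-\psi$ vanishes on $D_{t_{0}}$ and, on $\Omega_{t_{0}}$, solves the linear problem $dd^{c}(\dot{u}^{+}_{t_{0}}-\psi)=\delta_{Z}-\omega$ with zero boundary values and a logarithmic $-\infty$ pole at $Z$, so by uniqueness it is $G_{\Omega_{t_{0}}}(\cdot,Z)$, the Green function of the domain $\Omega_{t_{0}}$ with pole at $Z$ (relative to the background $\omega$). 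Since $\omega$ does not depend on $t$, $\dot{\mu}^{+}_{t_{0}}=dd^{c}\dot{u}^{+}_{t_{0}}$; computing $dd^{c}$ of this continuous, piecewise smooth function (bulk $dd^{c}\psi=\omega$ on $D_{t_{0}}$, which has no pole there; $dd^{c}(\psi+G_{\Omega_{t_{0}}}(\cdot,Z))=0$ on $\Omega_{t_{0}}$; plus the jump of the normal derivative across $\partial D_{t_{0}}$) shows $\dot{\mu}^{+}_{t_{0}}$ is, in the normalization of \eqref{eq:reg of equi meas}, carried by $\partial D_{t_{0}}$ with density the boundary normal derivative of $G_{\Omega_{t_{0}}}(\cdot,Z)$. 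To recognize this as the equilibrium measure of the unweighted compact set $(D_{t_{0}},0)$, run the same obstacle analysis for $v:=P_{(D_{t_{0}},\omega)}0$ (constant on the contact part of $D_{t_{0}}$, $\omega$-harmonic on $\Omega_{t_{0}}$ with constant boundary value, by the orthogonality relation \eqref{eq:og relatgion}): $MA_{\omega}(v)=\omega+dd^{c}v$ is then given by the same linear boundary value problem on $\Omega_{t_{0}}$ and hence, again by uniqueness and the defining property of $G_{\Omega_{t_{0}}}(\cdot,Z)$, by the same boundary data. When $D_{t_{0}}$ is a domain with piecewise smooth boundary this common measure is, by the classical representation, the normal derivative of the Green function of $\Omega_{t_{0}}$ (the exterior of $D_{t_{0}}$) with a pole at $Z$.

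\textbf{Main obstacle.} The delicate point is the identification of $\dot{u}^{+}_{t_{0}}$: making the $\varepsilon\downarrow 0$ passage rigorous while the contact set $D_{t}$, and hence the free boundary $\partial D_{t}$, is moving. One must show that the thin transition annulus $D_{t_{0}}\setminus D_{t_{0}+\varepsilon}$ contributes nothing in the limit; the natural tools are the two-sided squeeze $t_{0}\psi\ge u_{t_{0}}\ge u_{t_{0}+\varepsilon}-(\text{concavity error})$, the monotonicity $u_{t_{0}}\le u_{t_{0}+\varepsilon}\le(t_{0}+\varepsilon)\psi$, and the characterization of $\dot{u}^{+}_{t_{0}}$ as equal to the obstacle on $D_{t_{0}}$ and harmonic (after subtracting the pole) on $\Omega_{t_{0}}$. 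Because the proposition claims only the \emph{right} derivative and allows $\partial D_{t_{0}}$ to be merely closed, all normal-derivative statements should be phrased as identities of measures $MA_{\omega}(\cdot)$, specializing to the explicit Green-function density only under the piecewise smoothness hypothesis; this also sidesteps any appeal to (unavailable) second-order differentiability of $\mathcal{E}_{\omega}\circ P_{(X,\omega)}$ beyond Theorem~\ref{thm:diff thm}.
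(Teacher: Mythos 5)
Your overall strategy (realize $\mu_{t}$ as the Monge--Amp\`ere measure of a $t$-dependent envelope, use monotonicity/concavity in $t$ to get a right derivative, and identify that derivative with a Green-function object) is the same as the paper's, but your concrete reduction is set up with the wrong background form and the bookkeeping does not close. With your normalization $dd^{c}\psi=\omega-\delta_{Z}$ one has $dd^{c}\psi=\omega$ on $D_{t}\setminus\{Z\}$, so your own computation gives $\omega+dd^{c}u_{t}=(1+t)\omega$ on the contact set, not $\omega$; moreover $MA_{\omega}(P_{(X,\omega)}(t\psi))$ has total mass $1$, whereas the measure $1_{D_{t}}\omega$ of the proposition must have mass $1-t$ (the space of sections vanishing to order $\sim tk$ at $Z$ has dimension $\sim(1-t)k$), so $MA_{\omega}(P_{(X,\omega)}(t\psi))$ cannot be the minimizer of Corollary \ref{cor:ldp for vanishing} even after renormalization. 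The correct envelope either uses the $t$-dependent background form $(1-t)\omega$ of $L-tL_{Z}$ (competitors $(1-t)\omega$-psh and $\leq t\psi$), or, as the paper does, stays on the fixed bundle $L$: $\Phi_{t}$ is the upper envelope of psh weights $\chi\leq\Phi$ with $\chi\leq t\log|s_{Z}|^{2}+O(1)$, so that $dd^{c}\Phi_{t}=1_{D_{t}}\omega+t\delta_{Z}$. The fixed-bundle formulation is what makes $\Phi_{t+h}\leq\Phi_{t}$, $D_{t+h}\subset D_{t}$ and concavity in $t$ immediate (the competitor class is fixed and both constraints are affine in $t$); with your moving background the concavity claim needs a separate argument, so this is not a purely cosmetic slip.

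The second gap is the one you flag yourself. Since $D_{t}$ shrinks as $t$ increases, your claim that $u_{t}=t\psi$ on the interior of $D_{t_{0}}$ for all $t$ near $t_{0}$ fails for $t>t_{0}$, and the passage of the difference quotients across the moving free boundary is exactly what you leave open under ``Main obstacle''; as written the proposal therefore does not establish the key limit. The paper's mechanism avoids a free-boundary analysis altogether: by monotonicity and concavity the difference quotients of $\Phi_{t}$ converge to a right derivative $\dot{\Phi}_{t_{0}}$ which (up to sign and the fixed function $\log|s_{Z}|^{2}$) is pinned down by three properties -- it vanishes on $D_{t_{0}}$ (from $D_{t_{0}+h}\subset D_{t_{0}}$), it is harmonic on $X\setminus(D_{t_{0}}\cup\{Z\})$, and it has a logarithmic pole at $Z$ (from $\Phi_{t}=t\log|s_{Z}|^{2}+O(1)$ near $Z$) -- hence it is the Green function of $X\setminus D_{t_{0}}$ with pole at $Z$, after which Stokes' theorem gives the boundary normal derivative. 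Finally, your comparison object $P_{(D_{t_{0}},\omega)}0$ is the $\omega$-weighted equilibrium problem, whose measure carries an absolutely continuous part proportional to $\omega$ on $D_{t_{0}}$ and is not boundary-supported; the ``equilibrium measure of $(D_{t_{0}},0)$'' in the statement is the classical unweighted one computed in the chart $U=X-Z$ with $Z$ playing the role of infinity (harmonic measure seen from $Z$), i.e. precisely $dd^{c}$ of the Green-type envelope you already produced, so that last identification should be rephrased rather than routed through $P_{(D_{t_{0}},\omega)}0$.
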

\begin{proof}
The regularity statement can be deduced from the general $\mathcal{C}^{1.1}-$regularity
result of envelopes in \cite{berm1}. Next, it will be convenient
to switch to the weight notation, i.e. we let $\omega$ be the curvature
form for a weight $\Phi$ on a line bundle of degree $L$ (see section
\ref{sub:Notation-and-general}) and we write $\Phi_{t}$ for the
upper envelope of all psh weights $\chi_{t}$ on $L$ such that $\chi_{t}\leq\Phi$
on $X$ and $\chi_{t}\leq t\log|s|^{2}+C_{\chi_{t}}$ for some constant
$C_{\chi_{t}}.$ We let $D_{t}$ be the closed subset where $\chi_{t}=\Phi.$
Then $\mu_{t}$ is the curvature current of $\chi_{t},$ i.e. $\mu_{t}=dd^{c}\chi_{t}.$
Moreover, if follows immediately from the definition that $\Phi_{t+h}\leq\Phi_{t}$
if $h>0$ and that $t\mapsto\Phi_{t}$ is concave in $t.$ Accordingly,
$D_{t+h}\subset D_{t}$ and by concavity the right derivate $\dot{\Phi}_{t}$
is a decreasing limits of psh weights on $L$ and hence it exists
and is itself a psh weight on $L.$ Moreover, by definition, the right
derivative of $\mu_{t}$ is equal to $dd^{c}\dot{\Phi}_{t}.$ To see
the relation to Green functions we note that on $U:=X-Z$ we can use
$s$ as a trivializing section of $L$ and let $u_{t}:=\log|s|^{2}-\Phi_{t}$
be the description of the weight $\Phi_{t}$ in this trivialization,
denoting by $g_{t}$ its right derivative. It follows from $D_{t+h}\subset D_{t}$
that $\dot{g}_{t}=0$ on $D_{t}$, $dd^{c}g_{t}=0$ in the exterior
and $g_{t}$ has a pole at infinity in $U$ (using that by definition
$\log|s|^{2}-C\leq\dot{\Phi}_{t}\leq\log|s|^{2}+C).$ Hence, $g_{t_{0}}$is
the desired Green function and in the regular case we can use Stokes
theorem to get $\mu_{t_{0}}=dd^{c}g_{t_{0}}=[\partial D_{t_{0}}]\wedge d^{c}g_{t_{0}}$
which gives the required normal derivative on the boundary.
\end{proof}
In other words the proposition says (in the regular case) that the
one-parameter family of domains defined by the supports of the equilibrium
measures $\mu_{t}$ are decreasing in $t$ and their evolution is
driven by the normal derivative of the corresponding Green functions,
which is the definition of Laplacian growth. It would be interesting
to know if an analogous theory of {}``Monge-Ampère growth'' can
be developed in higher dimensions? In another direction it was recently
shown in \cite{r-w} that, in any dimension, the partial Legendre
transform in $t$ of the corresponding equilibrium potentials define
a weak geodesic ray in the (closure) of the space of all Kähler potentials
on $L,$ equipped with the Mabuchy metric.

\subsection{\label{sub:Proof-of-the-non-det ldp}A gen\label{sub:A-generalized-LDP beta}erelized
LDP for $\beta-$ensembles }

Given a weighted measure $(\nu,\omega)$ and a sequence $(\beta_{k})\subset\R_{+}$
we obtain a sequence of random point process on $X$, defined by the
following probability measure on $X^{N_{k}}:$ \[
\mu_{\beta_{k}}^{(N_{k})}:=\frac{1}{\mathcal{Z}_{N_{k},\beta_{k}}}\left\Vert \det\Psi_{k}\right\Vert ^{\beta_{k}}(\nu(x_{1})\otimes\cdots\otimes\nu(x_{N})\]

\begin{thm}
\label{thm:ldp for general beta}Suppose that $\beta_{k}\leq C$ and
$\beta_{k}k\rightarrow\infty$ (in particular, $\beta_{k}\equiv\beta$
is allowed) and that the measure $\nu$ satisfies the \emph{strong}
B-M-property wrt the non-pluripolar set $K$ (i.e. \ref{eq:bm strong}
holds for any $\omega(=\omega_{\phi})$. Then the random point processes
above satisfy a LDP with the same rate functional as in Theorem \ref{thm:intro large dev},
but with the speed $\beta_{k}kN_{k}.$\end{thm}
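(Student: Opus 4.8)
\emph{Plan.} The rate functional asserted here is literally $H_{(K,\omega)}$ from Theorem~\ref{thm:intro large dev}, so none of its qualitative features---that it is good and is minimized exactly at $\mu_{eq}$---need to be re-examined: they are Proposition~\ref{pro:rate}. What must be redone is the LDP machinery itself, now for the exponent $\beta_k$ and at speed $r_k:=\beta_k k N_k$. I would run the G\"artner--Ellis argument of section~\ref{sub:using g-e}: since $K$ is compact, $\mathcal{M}(K)$ is a locally convex Hausdorff space, $\mathcal{P}(K)$ is compact, and exponential tightness of the laws $\Gamma_k:=(\delta_{N_k}/N_k)_\ast\mu^{(N_k)}_{\beta_k}$ is automatic; the relevant logarithmic moment functional $\Lambda$ is---up to the rescaling relating $\|\cdot\|^{\beta_k}$ to $\|\cdot\|^2$---the one already used in section~\ref{sub:using g-e} and is Gateaux differentiable by Theorem~\ref{thm:diff thm}; so everything reduces to the convergence of the tilted free energies. (Equivalently one can transcribe the direct proof of section~\ref{sub:A-direct-proof} verbatim, replacing $\|\det\Psi_k\|^2$ by $\|\det\Psi_k\|^{\beta_k}$, the scaling $k^{-(n+1)}$ by $r_k^{-1}$, and the localization set $A_{k\phi}$ of Lemma~\ref{lem:localization} by $\{\,r_k^{-1}\log\mu^{(N_k)}_{\beta_k,k\phi}\geq-1/(\beta_k k)\,\}$, on whose complement $\mu^{(N_k)}_{\beta_k,k\phi}<e^{-N_k}$; the two proofs then run in parallel.)

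\emph{The new ingredient.} The one genuinely new point is the $\beta$-analogue of the step from \ref{eq:conv trans diam} to \ref{eq:conv free en not intro}: for every continuous weight $\phi$, the quantity
\[
\frac{1}{r_k}\log\int_{K^{N_k}}\bigl(\|\det\Psi_k\|^2_{k\phi}\bigr)^{\beta_k/2}\,\nu^{\otimes N_k}
\]
has the same limit as the $r_k$-normalized logarithm of the sup-norm; because the factor $\beta_k/2$ cancels against $r_k=\beta_k k N_k$, this sup-norm quantity equals $(2kN_k)^{-1}\log\|\det\Psi_k\|^2_{L^\infty(k\phi,K^{N_k})}$, which converges by \ref{eq:conv trans diam}. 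One inequality is immediate from $\nu(K)=1$. For the reverse one I would invoke the \emph{strong} Bernstein--Markov property \ref{eq:bm strong}, applied to $\psi:=\tfrac1k\log\|\det\Psi_k(\dots,x_j,\dots)\|^2_{k\phi}\in PSH(X,\omega_\phi)$ (the other $N_k-1$ variables frozen) with exponent $p=\beta_k k/2$, and iterated over $j=1,\dots,N_k$ exactly as in Lemma~\ref{lem:pf of non-cpt ldp}; this gives
\[
\bigl(\|\det\Psi_k\|^2_{L^\infty(k\phi,K^{N_k})}\bigr)^{\beta_k/2}\;\leq\;C_\epsilon^{N_k}\,e^{\epsilon\beta_k k N_k/2}\int_{K^{N_k}}\bigl(\|\det\Psi_k\|^2_{k\phi}\bigr)^{\beta_k/2}\,\nu^{\otimes N_k},
\]
with $C_\epsilon=C_{\epsilon,\beta_k k/2}$ the constant in \ref{eq:bm strong}. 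After dividing by $r_k$, the first factor contributes $N_k\log C_\epsilon/(\beta_k k N_k)=O\bigl(1/(\beta_k k)\bigr)$ and the second contributes $\epsilon/2$, both negligible.

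\emph{Where the hypotheses enter.} This is exactly where the two conditions on $\beta_k$ are used. The hypothesis $\beta_k k\to\infty$ is what makes the accumulated Bernstein--Markov prefactor $C_\epsilon^{N_k}=e^{O(N_k)}$ negligible at speed $\beta_k k N_k$ (and, running the argument along the sequence $p_k=\beta_k k/2\to\infty$, lets one keep $N_k\log C_{\epsilon_k,p_k}=o(\beta_k k N_k)$ via a diagonal choice $\epsilon_k\downarrow0$); it also guarantees $r_k\to\infty$ and $1/(\beta_k k)\to0$, so that the complement of $A_{k\phi}$ has asymptotically vanishing mass and Theorem~\ref{thm:[asym-Fekete]}(ii) (which is really a statement about asymptotically Fekete configurations, hence transfers directly) still forces concentration on $MA(P_{(K,\omega)}\phi)$. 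The hypothesis $\beta_k\leq C$ keeps $r_k$ comparable to the native speed $kN_k\sim Vk^{n+1}$ of the asymptotics \ref{eq:conv trans diam}, so that all the error terms---in particular those in the localization/Jensen estimate of Lemma~\ref{lem:localization} and, in the non-compact variant, in the exponential-tightness bound---stay uniformly small. Note that neither the upper bound (Proposition~\ref{pro:upper bound over closed st}, which after the $\beta_k/2$ cancellation is the identical $L^\infty$ statement) nor these conditions on $\beta_k$ are needed for the \emph{non-normalized} LDP; they enter only in normalizing by $\mathcal{Z}_{N_k,\beta_k}$ and in the lower bound.

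\emph{Conclusion and main obstacle.} With the free-energy convergence in hand, the tilted partition functions $r_k^{-1}\log\bigl(\mathcal{Z}_{N_k,\beta_k}^{-1}\int_{K^{N_k}}\mu^{(N_k)}_{\beta_k}e^{r_k\langle u,\delta_{N_k}/N_k\rangle}\bigr)$ converge, for each $u\in C^0(K)$, to the Gateaux-differentiable $\Lambda$ of section~\ref{sub:using g-e}, the $\beta_k$-factors cancelling just as $\beta_k/2$ did above; the abstract G\"artner--Ellis theorem~\ref{thm:(Abstract-G=0000E4rtner-Ellis-Theorem} then delivers the LDP at speed $\beta_k k N_k$ with rate $\Lambda^\ast=H_{(K,\omega)}$, and Proposition~\ref{pro:rate} supplies its remaining properties. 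I expect the only real difficulty to be the error bookkeeping in the strong-Bernstein--Markov step above---controlling $C_{\epsilon_k,p_k}$ along $p_k\to\infty$ while simultaneously sending $\epsilon_k\downarrow0$---which is precisely what the two growth conditions on $\beta_k$ are designed to make routine; everything else is a faithful transcription of sections~\ref{sub:A-direct-proof}--\ref{sub:using g-e}.
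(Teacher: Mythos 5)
Your proposal is correct and follows essentially the same route as the paper's proof: the paper also reduces everything to the convergence of the $\beta_k$-deformed free energies, obtained by comparing the $L^{\beta_k}$-norm with the sup-norm through the strong Bernstein--Markov inequality applied once in each variable (with $p=\beta_k k$, your $p=\beta_k k/2$ being the same up to the squared-norm convention) and then invoking \ref{eq:conv trans diam}, with the hypotheses $\beta_k\leq C$ and $\beta_k k\rightarrow\infty$ entering exactly through the error terms $\epsilon\beta_k$ and $\frac{1}{\beta_k k}\log C_\epsilon$, before concluding via the abstract G\"artner--Ellis theorem (with the direct proof of section \ref{sub:A-direct-proof} noted as an alternative, just as you do).
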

\begin{proof}
We will first show that \begin{equation}
\mathcal{F}_{N_{k},\beta_{k}}[\phi]:=\frac{1}{_{kN_{k}}}\log\left\Vert \det(\Psi_{k})e^{-k\phi}\right\Vert _{L^{\beta_{k}}(X^{N_{k}},\nu^{\otimes N_{k}})}\rightarrow-\mathcal{E}_{\omega}(P_{(K,\omega)}\phi)\label{eq:conv of free energy for beta}\end{equation}
 as $k\rightarrow\infty,$ where we have taken $\Psi_{k}$ to be defined
wrt a base $(\Psi_{i,k})$ which is orthonormal wrt the Hermitian
metric determined by $(\nu,\left\Vert \cdot\right\Vert ).$ To this
end first note that for any $\epsilon>0$ there is a constant $C_{\epsilon}$
such that \[
\sup_{K^{N_{k}}}\left\Vert \det(\Psi_{k})e^{-k\phi}\right\Vert \leq C_{\epsilon}^{N_{k}}e^{\epsilon\beta_{k}kN_{k}}\left\Vert \det(\Psi_{k})e^{-k\phi}\right\Vert _{L^{\beta_{k}}(X^{N_{k}},\nu^{\otimes N_{k}})}\]
Indeed, this follows immediately from applying the inequality \ref{eq:bm strong}
$N_{k}$ times, one time for each variable of $\psi(x_{1},...,x_{N_{k}}):=\frac{1}{k}\log\left\Vert \det(\Psi_{k})e^{-k\phi}\right\Vert $
and with $p=\beta_{k}k.$ But then the first point in Theorem \ref{thm:[asym-Fekete]}
gives that \[
\mathcal{F}_{N_{k},\beta_{k}}[\phi]\leq0-\mathcal{E}_{\omega}(P_{(K,\omega)}\phi)+o(1)\leq\mathcal{F}_{N_{k},\beta_{k}}[\phi]+\epsilon\beta_{k}+\frac{1}{\beta_{k}k}\log C_{\epsilon}\]
 which finishes the proof \ref{eq:conv of free energy for beta}.
Finally, since \[
\frac{1}{\beta_{k}kN_{k}}\log\E(e^{-\beta_{k}k(\phi(x_{1})+...+)})=\mathcal{F}_{N_{k},\beta_{k}}[\phi]-\mathcal{F}_{N_{k},\beta_{k}}[0]\rightarrow-\mathcal{E}_{\omega}(P_{(K,\omega)}\phi)+\mathcal{E}_{\omega}(P_{(K,\omega)}0)\]
the abstract Ellis-Gärtner theorem gives the desired LDP (alternatively,
the direct proof in section \ref{sub:A-direct-proof} can also be
used to conclude the proof of the LDP, just as in the determinantal
case). 
\end{proof}

\subsection{Proof of Cor \ref{cor:non-sharp tail intro} }

Given $\lambda>0$ and $u\in C^{0}(X)$ we let $F_{\lambda}$ be the
set of all probability measures $\mu$ on $K$ such that $\int_{X}\phi(\mu-\mu_{eq})\geq\lambda.$
Since this is a compact set, not containing $\mu_{eq},$ and since
the rate functional $H(=H_{(K,\omega)})$ is good it follows immediately
that $\inf_{\mu\in F_{\lambda}}H=C_{\lambda}>0$ and hence the corollary
is a consequence of the upper bound contained in Theorem \ref{thm:intro large dev}.
Next, let us show that when $\mu_{eq}=\omega$ we have \[
\inf_{\mu\in F_{\lambda}}E_{\omega}(\mu)=\frac{2\lambda^{2}}{\left\Vert d\phi\right\Vert _{X}^{2}}\]
(to simplify the notation we will assume that $V=1).$To this end
we first note that \[
E(\mu)=-\frac{1}{2}\int_{:X}u_{\mu}dd^{c}u_{\mu}=\frac{1}{2}\int_{:X}du_{\mu}\wedge d^{c}u_{\mu}:=\frac{1}{2}(u_{\mu},u_{\mu})\]
 where $u_{\mu}$ is satisfies $dd^{c}u_{\mu}=\mu-\omega.$ Since,
$(\cdot,\cdot)$ defines a positive definite inner product on the
Dirichlet space $H:=\{v:dv\in L^{2}(X)\}/\R$ and since $F_{\lambda}=\{\mu=\omega+dd^{c}v:\,\,(\phi,v)\geq\lambda\}$
it follows immediately from the Cauchy-Schwartz inequality that $E(\mu)=\frac{2\lambda^{2}}{\left\Vert d\phi\right\Vert _{X}^{2}}.$
The proof is concluded by noting that (in any dimension) we have when
$\omega\geq0$ that $H_{(X,\omega)}=E_{\omega}.$ Indeed, $C(X,\omega):=\inf_{\mu\in\mathcal{P}(X)}E_{\omega}(\mu)=\mathcal{E}_{\omega}(P_{(X,\omega)}0)=\mathcal{E}_{\omega}(0)=0.$

\section{\label{sec:Relation-to-bosonization}Relation to bosonization and
effective field theories}

We will conclude with a heuristic discussion about some relations
to the notion of bosonization (or fermion-boson correspondence) in
the physics literature (see \cite{st}). We will compare the present
setting with the one in the paper \cite{b-v-} which concerns the
case of a Riemann surface $X$ (but see also \cite{v-v}). Another
useful reference on field theory linking mathematical and physical
terminology is \cite{gaw}. As established in \cite{b-v-} (see also
\cite{v-v}) there is a correspondence between a certain theory of
fermions $\Psi$ on one hand and bosons $\phi$ on the other, on a
Riemann surface $X.$ In the present context $\Psi$ is a complex
spinor coupled to the line bundle $L$ and $\phi$ is a smooth function
on $X.$ The main ingredient in the correspondence is the equality
\begin{equation}
\left\langle \left\Vert \Psi(x_{1})\right\Vert ^{2}\cdots\left\Vert \Psi(x_{N})\right\Vert ^{2}\right\rangle =\left\langle e^{i\phi(x_{1})}\cdots e^{i\phi(x_{N})}\right\rangle ,\label{eq:boson ans}\end{equation}
 (see formula $4.4$ in \cite{b-v-}) where the brackets denote integration
against (formal) functional integral measures of the form $\mathcal{D}\Psi\mathcal{D}\bar{\Psi}e^{-S_{Ferm}(\Psi,\bar{\Psi})}$
and $\mathcal{D}\phi e^{-S_{bose}(\phi)},$ respectively. The fermionic
action $S_{Ferm}$ has a standard form (see below) and the problem
is to find a bosonic action $S_{bose}$ so that the ansatz \ref{eq:boson ans}
above holds.

Comparing with the geometric setup in the present paper we will, in
this section, consider the case when the compact set $K$ is all of
$X$ and equip $L$ with a smooth metric with (normalized) curvature
form $\omega,$ that will however not assumed to be positive. We will
also fix an Hermitian metric on $X$ with volume form $dV.$ In the
previous terminology we will hence consider the determinantal point
process associated to the weighted measure $(dV,\omega)$ on $X.$

Let us now explain how the LDP in Theorem \ref{thm:intro large dev}
(and in particular its variant in Theorem \ref{thm:(Effective-bosonization)-Let}
) can be interpreted as an asymptotic/effective version of the boson-fermion
correspondence on a complex manifold of arbitrary dimension $n$ with
the choice \begin{equation}
S_{bose}(\phi)=-\frac{1}{(-i)^{n-1}}\mathcal{E}_{-i\omega}(\phi)\label{eq:ansats for bos act}\end{equation}
where it will, in this section, be convenient to use a new normalization
of the functional $\mathcal{E}_{\omega}:$ \[
\mathcal{E}_{\omega}(\phi)=\frac{1}{(n+1)!}\sum_{j=0}^{n}\int_{X}\omega_{\phi}^{j}\wedge(\omega)^{n-j}\]
where we recall that $\omega_{\phi}:=\omega+dd^{c}\phi(=\omega+\frac{i}{2\pi}\partial\bar{\partial}\phi)$
(and where we have, for simplicity, assumed that the volume $V$ of
$L$ is one). With this normalization the following $n+1-$homogeneity
holds: \begin{equation}
\mathcal{E}_{c\omega}(c\psi)=c^{n+1}\mathcal{E}_{\omega}(\psi)\label{eq:homogen}\end{equation}
In particular, when $n=1$ we get \[
S_{bose}(\phi)=\mathcal{E}_{-i\omega}(\phi)=-\frac{1}{2}\int_{X}d\phi\wedge d^{c}\phi-i\int_{X}\phi\omega\]
which in the notation of \cite{b-v-} corresponds precisely to the
decomposition \[
S_{bose}(\phi)=S_{1}+S_{2}\]
 given in \cite{b-v-}, in the case when $X$ has genus zero. In the
higher genus case soliton type terms are added in \cite{b-v-} to
the action, which describe the topological sector of the bosonic theory.
This is related to the fact that the field $\phi$ should really be
assumed to be circle valued (i.e. its values are only well-defined
up to integer periods). But as explained in \cite{b-v-} one may assume
that $\phi$ is single-valued when dealing with the non-topological
sector - compare the discussion in connection to formula 3.19 in \cite{b-v-}
(anyway the topological sector can be shown to give a lower order
contribution in the large $k$ limit studied below)

To make the connection with the LDP we will, as before, consider the
the limit when $L$ is replaced by the large tensor power $kL.$ Since,
$N_{k}\sim k^{n}$ this is also the limit of many particles and the
asymptotic boson-correspondence will hence give a bosonic field theory
description of a collective theory of fermions. As explained in the
introduction of the paper we will consider {}``clouds'' of points
$(x_{1},...,x_{N})$ that can be described by a {}``macro state'',
i.e. a limiting continuous distribution, or more precisely a measure
$\mu=\rho dV$ on $X:$ \begin{equation}
\frac{1}{N_{k}}\sum_{i=1}^{N_{k}}\delta_{x_{i}}\approx\mu\label{eq:appr measure}\end{equation}
 in a suitable smeared out sense.

\subsection{The fermionic side}

First recall the representation of the Slater determinant \ref{eq:slater det}
(expressed in an orthonormal base wrt to $(dV,\omega))$ as a functional
integral over Grassman (anti-commuting) fields (compare formula 4.5
in \cite{b-v-} ):

\begin{equation}
\left\Vert (\det\Psi)(x_{1},...,x_{N})\right\Vert ^{2}=C_{N}\int\mathcal{D}\Psi\mathcal{D}\bar{\Psi}e^{-S_{ferm}(\Psi,\bar{\Psi})}\left\Vert \Psi(x_{1})\right\Vert ^{2}\cdots\left\Vert \Psi(x_{1})\right\Vert ^{2},\label{eq:fermion path int}\end{equation}
integrating of over all complex (Dirac) spinors $\Psi$, i.e. over
all smooth sections of the exterior algebra $\Lambda^{0,*}(T^{^{*}}X)\otimes L.$
Here $S_{ferm}(\Psi,\bar{\Psi})$ is the fermionic action \[
S_{ferm}(\Psi,\bar{\Psi})=\int_{X}\left\langle \D_{A}\Psi,\Psi\right\rangle dV,\]
 expressed in terms of the Dirac operator $\D_{A}$ on $\Lambda^{0,*}(T^{^{*}}X)\otimes L$
induced by the complex structure on $X$ and $L$ and coupled to the
gauge field/connection $A$ whose curvature is $-i2\pi\omega$ and
to the Hermitian metric on $X$ with volume form $dV.$ Concretely,
we have $\D_{A}=\overline{\partial}+\overline{\partial}^{*}$ expressed
in terms of the adjoint of the $\overline{\partial}$- operator. For
the Riemann surface case see \cite{b-v-}. This latter case, i.e.
when $n=1,$ is special since $S_{ferm}(\Psi,\bar{\Psi})$ is then
independent of the choice of Hermitian metric on $X$ (i.e. the action
is conformally invariant). Indeed, decomposing $\Psi=\Psi_{0}+\Psi_{1}$
gives \[
S_{ferm}(\Psi,\bar{\Psi})=i\int_{X}\overline{\partial}\Psi_{0}\wedge\overline{\Psi_{1}}+\Psi_{0}\wedge\overline{\partial\Psi_{1}}\]
where the integrand is naturally a $(1,1)-$form and may hence be
integrated over $X$ (to simplify the formula we have assumed that
$L$ is trivial above - in general one also has to use the metric
on $L$ or equivalently couple $\overline{\partial}$ to the gauge
field $A).$ The integer $N$ appearing in \ref{eq:fermion path int}
is the dimension of the space of zero-modes of $\D_{A}$ on $\Lambda^{0,*}(T^{^{*}}X)\otimes L$
which coincides with $H^{0}(X,L)$ since we have assumed that $L$
is ample (more precisely, this will be true once we replace $L$ with
$kL$ for $k$ sufficiently large due to Kodaira vanishing in positive
degrees). Moreover, the constant $C_{N}$ in \ref{eq:fermion path int}
is the invers of the Ray-Singer analytic torsion of the complex $\Lambda^{0,*}(T^{^{*}}X)\otimes L$
(compare section \ref{sub:Analytic-torsion,-exponentially}), if we
use zeta function regalization of the corresponding formal determinants. 

Now applying the LDP in Theorem \ref{thm:(Effective-bosonization)-Let}
for the Slater determinants multiplied by the analytic torsion and
using \ref{eq:fermion path int} and \ref{eq:appr measure} hence
gives \[
\left\langle \left\Vert \Psi(x_{1})\right\Vert ^{2}\cdots\left\Vert \Psi(x_{N})\right\Vert ^{2}\right\rangle \sim e^{-kN_{k}E_{\omega}(\mu)}\]

\subsection{The bosonic side}

We will use an argument involving analytic continuation in a real
parameter $t$ (which will be set to $-i$ in the end). To this end
we consider the path integral \[
\int\mathcal{D}\phi e^{-S_{t}(\phi)}e^{t\phi(x_{1})}\cdots e^{t\phi(x_{N})},\,\,\,\,-S_{t}(\phi)=t^{-(n-1)}\mathcal{E}_{t\omega}(\phi)\]
Next, we will show that, in the limit when $L$ gets replaced with
$kL$ and $\omega$ with $k\omega$ we have \begin{equation}
\int\mathcal{D}\phi e^{-S_{t}(\phi)}e^{t\phi(x_{1})}\cdots e^{t\phi(x_{N})}\sim e^{t^{2}k^{n+1}E_{\omega}(\mu)}\label{eq:asympt of bos t-integr}\end{equation}
Accepting this for the moment we see that the effective bosonization
explained above follows by invoking analytic continuation and setting
$t=-i.$ 

To see how the asymptotics above come about first note that we get
the following exponent in the integral, after setting $\phi=k\psi:$ 

\[
t^{-(n-1)}\mathcal{E}_{tk\omega}(k\psi)-tk(\psi(x_{1})+...\psi(x_{N}))\]
Using the $n+1-$homogeneity \ref{eq:homogen} the previous expression
may be written as \[
k^{(n+1)}(t^{-(n-1)}\mathcal{E}_{t\omega}(\psi)-t\frac{1}{k^{n}}(\psi(x_{1})+...\psi(x_{N})))\]
and hence, using \ref{eq:appr measure}, it can be approximated by
\[
k^{(n+1)}(t^{-(n-1)}\mathcal{E}_{t\omega}(\psi)-t\int_{X}\psi\mu\]
Now we get \[
\int\mathcal{D}\phi e^{-S_{t}(\phi)}e^{t\phi(x_{1})}\cdots e^{t\phi(x_{N})}\sim e^{k^{n+1}\sup_{\psi}(t^{-(n-1)}\mathcal{E}_{t\omega}(\psi)-t\int_{X}\psi\mu)}\]
Denote by $\psi_{t}$ a function where the sup is above is attained.
Since it is a stationary point we get the equation \[
t^{-(n-1)}\frac{1}{n!}(t\omega+dd^{c}\psi_{t})^{n+1}=t\mu\Leftrightarrow\frac{1}{n!}(t\omega+dd^{c}\psi_{t})^{n+1}=t^{n}\mu\]
(see the variational properties in Prop \ref{pro:of energy-1}). Hence
a solution is obtained by setting $\psi_{t}=t\psi_{\mu}$ where $\psi_{\mu}$
is a potential for $\mu$ (solving the equation for $t=1).$ Finally,
since\[
(t^{-(n-1)}\mathcal{E}_{t\omega}(t\psi_{\mu})-t\int_{X}(t\psi_{\mu})\mu)=t^{2}(\mathcal{E}_{\omega}(\psi_{\mu})-\int_{X}\psi_{\mu}\mu)=t^{2}E_{\omega}(\mu)\]
(using the homogeneity \ref{eq:homogen} for $c=t$ in the first step
and the formula \ref{eq:energy in terms of pot} for $E_{\omega}(\mu).$
This proves the asymptotics \ref{eq:asympt of bos t-integr} up to
a subtle point that was neglected in the previous argument: when $n>1$
the function $\psi_{t}$ may not be a maximizer even though it is
a stationary point. Equivalently, this means that the potential $\phi_{\mu}$
of $\mu$ may not maximize the functional \[
\phi\mapsto\mathcal{E}_{\omega}(\phi)-\int_{X}\phi\mu\]
 over the\emph{ whole} space $C^{\infty}(X).$ By Theorem \ref{thm:var sol of ma}
it is a maximizer on the subspace $\mathcal{H}_{\omega}$ of $C^{\infty}(X)$
where $\omega_{\phi}\geq0,$ but it is known that the functional above
is not even bounded from above on the whole space $C^{\infty}(X).$
Coming back to the original variable $\phi$ this problem could have
been circumvented by restricting the integration to all $\phi$ such
that $dd^{c}\phi\geq-kt\omega.$ This can be seen as a regularization
similar to frequency cut-offs usually used in quantum field theory.
Alternatively, one could expect that there is a choice of action \[
S_{bos}=-\frac{1}{(-i)^{n-1}}\mathcal{E}_{-i\omega}+S'\]
 where the term $S'$ has the effect of localizing the integral to
the subspace $\mathcal{H}_{\omega/k}$ in the large $k$ limit and
that $S'$ will be lower order in $k$ and hence negligible on the
space $\mathcal{H}_{\omega/k}.$ It would be very interesting to find
a model (when $n>1)$ where such a mechanism could be analyzed.

\subsection{Consistency with the central limit theorem}

Instead of going further into the subtle points raised above we will
just observe that the choice of bosonic action \ref{eq:ansats for bos act}
is consistent with the central limit theorem (CLT) proved in \cite{berm3}.
We will consider the case when the curvature form $\omega$ is strictly
positive. Then the CLT referred to above says, in physical terms,
that the (suitably scaled) fluctuations of the empirical measure \ref{eq:intro random measure}
converge to a random measure/charge-distribution for a statistical
Coulomb gas ensemble described by the Boltzmann weight \[
e^{-\tilde{E}(\nu)}\mathcal{D}v\]
where $\nu(=\rho dV)$ is a signed measure (i.e. a difference of two
positive measures) such that $\int_{X}\nu=0$ and \[
\tilde{E}(\nu)=-\frac{1}{2}\int_{X}(\Delta u_{\nu})u_{\nu}dV(=\frac{1}{2}\left\Vert du_{\nu}\right\Vert _{X}^{2}),\,\,\,\,\nu=\Delta u_{\nu}\]
where $\Delta$ is the Laplacian taken wrt a Riemannian metric $g$
on $X$ and $dV$ is the volume form of $g.$ Equivalently, the Green
potential $u_{\nu}$ is a\emph{ massless boson field} on $X$ (i.e.
a \emph{Gaussian free field} in mathematical terms). Using Fourier
transforms (see \cite{berm3} and references therein) the precise
mathematical meaning of the convergence is \begin{equation}
\int_{X^{N}}dV(x_{1})...\rho^{(N)}(x_{1},.,,,x_{N})e^{\frac{1}{a_{k}}iu(x_{1})}\cdots e^{iu(x_{N})}\rightarrow e^{-\frac{1}{2}\left\Vert du\right\Vert _{X}^{2}}\label{eq:clt conv phy}\end{equation}
where $\rho^{(N)}$ is the Slater determinant appearing above and
$a_{k}=k^{(n-1)/2}.$ To obtain the latter convergence from the previous
bosonization ansatz we approximate the lhs above by \begin{equation}
C_{N_{k}}\int\mathcal{D}\mu e^{\frac{k^{n}}{a_{k}}i\int u(\mu-\frac{\omega^{n}}{n!})}D\phi e^{S_{bose,k}(\phi)+k^{n}i\int\phi\mu}\label{eq:clt using phys}\end{equation}
 Note that when $k=1$ we may expand \[
S_{bose,1}(\phi)=-\frac{1}{(-i)^{n-1}}\mathcal{E}_{-i\omega}(\phi)=-i\int_{X}\phi\frac{\omega^{n}}{n!}-\frac{1}{2}\int_{X}d\phi\wedge d^{c}\phi\wedge\frac{\omega^{n-1}}{(n-1)!}+...\]
where the dots indicate a sum of $n-1$ terms of the form \[
\frac{1}{2}\int_{X}d\phi\wedge d^{c}\phi\wedge(dd^{c}\phi)^{j}\wedge\omega^{n-j},\,\,\,\, j\geq1\]
which hence is of order $\geq3$ in $\phi$ (some of the coefficients
will be imaginary). In the limit when we are changing $L$ by $kL$
and $\omega$ by $k\omega$ we write $\phi=k\psi$ as before, so that
\[
S_{bose,k}(\phi)=k^{n+1}S_{bose,1}(\psi)\]
and hence the exponent in the $\phi$ integral in \ref{eq:clt using phys}
may be written as \[
k^{n+1}(i\int\psi(\mu-\frac{\omega^{n}}{n!})-\frac{1}{2}\left\Vert d\psi\right\Vert _{X}^{2}+...)\]
Les us now make the following change of variables: \[
\mu=\frac{\omega^{n}}{n!}+\frac{1}{k^{(n+1)/2}}\nu,\,\,\,\psi=\frac{1}{k^{(n+1)/2}}v\]
Then the previous expression may be written as \[
i\int v\nu-\frac{1}{2}\left\Vert dv\right\Vert _{X}^{2}+O(k^{-1})\]
and hence the integral over $D\phi$ may be approximated by a Gaussian
integral over $v$ which, as usual, may be evaluated as \[
e^{-\frac{1}{2}\left\Vert dv_{\nu}\right\Vert _{X}^{2}}\]
All in all this means that the integral \ref{eq:clt using phys} may
be approximated by \[
\int\mathcal{D}\nu e^{^{i\int u\nu}}e^{-\frac{1}{2}\left\Vert dv_{\nu}\right\Vert _{X}^{2}}\]
and performing the Gaussian integration again finally gives the end
result $e^{-\frac{1}{2}\left\Vert du\right\Vert _{X}^{2}},$ thus
confirming \ref{eq:clt conv phy}.

\end{document}